\newtheorem{theorem}{Theorem}[section]
\newtheorem{definition}[theorem]{Definition}
\newtheorem{assumption}[theorem]{Assumption}
\newtheorem{proposition}[theorem]{Proposition}
\newtheorem{lemma}[theorem]{Lemma}
\newtheorem{corollary}[theorem]{Corollary}
\newtheorem{remark}[theorem]{Remark}
\newtheorem{fact}[theorem]{Fact}
\newtheorem{example}[theorem]{Example}
\DeclarePairedDelimiter{\norm}{\lVert}{\rVert}
\definecolor{tabblue}{rgb}{0.122, 0.47, 0.71}
\definecolor{taborange}{rgb}{1.00, 0.498, 0.055}
\definecolor{tabgreen}{rgb}{0.173,0.627,0.173}
\definecolor{dkgreen}{rgb}{0,0.6,0}
\definecolor{gray}{rgb}{0.5,0.5,0.5}
\definecolor{mauve}{rgb}{0.58,0,0.82}
\tiny\color{gray},
\numberwithin{equation}{section}
\newtcbox{\alertinline}[1][red]
  {on line, arc = 0pt, outer arc = 0pt,
    colback = #1!20!white, colframe = #1!50!black,
    boxsep = 0pt, left = 1pt, right = 1pt, top = 2pt, bottom = 2pt,
    boxrule = 0pt, bottomrule = 1pt, toprule = 1pt}
\newcommand{\Jiajin}[1]{{\scriptsize\textbf{\color{blue}[Jiajin: #1]}}}
\newcommand{\viet}[1]{{\color{magenta} [{\bf Viet:} #1]}}
\newcommand{\dd}{\mathrm{d}}
\newcommand{\be}{\begin{equation}}
\newcommand{\ee}{\end{equation}}
\newcommand{\mc}{\mathcal}
\newcommand{\Let}{\triangleq}
\newcommand{\mbb}{\mathbb}
\newcommand{\Q}{\mathbb Q}
\newcommand{\R}{\mathbb R}
\newcommand{\E}{\mathbb E}
\newcommand{\PD}{\mathbb{S}_{++}}
\newcommand{\QQ}{\mathbb{Q}}
\newcommand{\PP}{\mathbb{P}}
\newcommand{\Pnom}{\widehat{\PP}}
\DeclareMathOperator{\sign}{sign}
\DeclareMathOperator{\st}{s.t.}
\title{Tikhonov Regularization is Optimal Transport Robust under
Martingale Constraints}
\author{
  Jiajin Li \quad \quad  \quad \quad Sirui Lin \quad \quad \quad \quad Jos\'{e} Blanchet\\
  Stanford University \\
  \texttt{\{jiajinli,~siruilin,~ jose.blanchet\}@stanford.edu} \\
   \And
   Viet Anh Nguyen \\
   Chinese University of Hong Kong \\
   \texttt{nguyen@se.cuhk.edu.hk} \\
%   \AND
%   Coauthor \\
%   Affiliation \\
%   Address \\
%   \texttt{email} \\
%   \And
%   Coauthor \\
%   Affiliation \\
%   Address \\
%   \texttt{email} \\
%   \And
%   Coauthor \\
%   Affiliation \\
%   Address \\
%   \texttt{email} \\
}
\begin{document}

\maketitle

\begin{abstract}
Distributionally robust optimization has been shown to offer a principled way to regularize learning models. In this paper, we find that Tikhonov regularization is distributionally robust in an optimal transport sense (i.e., if an adversary chooses distributions in a suitable optimal transport neighborhood of the empirical measure), provided that suitable martingale constraints are also imposed. Further, we introduce a relaxation of the martingale constraints which not only provides a unified viewpoint to a class of existing robust methods but also leads to new regularization tools. To realize these novel tools, tractable computational algorithms are proposed. As a byproduct, the strong duality theorem proved in this paper can be potentially applied to other problems of independent interest. 
% \begin{itemize} 
%     \item Tykhonov regularization is
% distributionally robust optimal in a Wasserstein sense provided Martingale constraints are imposed.
% % Optimal transport based distributionally robust optimization with martingale constraints is equivalent to Tykhonov regularization. 
%     \item This
% suggests that one may be able to improve upon Tykhonov regularization by exploring wider uncertainty sets which can be used to relax the Martingale constraint. Indeed, the relaxed martingale model proposed in this paper provides a unified viewpoint to a class of existing regularizations (e.g., Jocabian) and further results in a new regularization term, which can be applied in the adversarial learning.  
%     \item The effectiveness of our model has been validated in both linear regression and adversarial learning tasks.
%     \item As a byproduct, the strong duality theorem proved in this paper can be potentially applied for other problems, which could be of independent interests. 
% \end{itemize}
\end{abstract}

\section{Introduction}

Regularization is an important tool in machine learning which is used in, for instance, reducing overfitting~\cite{sain1996nature}. Recently, ideas from distributionally robust optimization (DRO) have led to a fresh viewpoint on
regularization precisely in connection to overfitting; see, e.g.,~\cite{cranko2021generalised,blanchet2019robust,namkoong2017variance,gao2017wasserstein,staib2019distributionally,shafieezadeh2019regularization,shafieezadeh2015distributionally,chen2018robust} and the references therein. 

In these references it is shown that many standard regularization-based estimators arise as the solution of a min-max game in which one wishes to minimize a loss over a class of parameters against an adversary that maximizes the out-of-distribution impact of any given parameter choice, that is, the adversary perturbs the empirical distribution in a certain way. The choice of adversarial distributions or perturbations in DRO is often non-parametric thus providing reassurance that the decision is reasonably robust to a wide range of out-of-distribution perturbations. For example, one such non-parametric choice is given by employing optimal transport costs ~\cite{villani2009optimal} to construct a so-called distributional uncertainty set (e.g. a Wasserstein ball around the empirical distribution) for the adversary to choose.  \cite{sinha2018certifying} shows that optimal transport-based DRO (OT-DRO) is closely related to adversarial robustness in the sense of steepest gradient loss contamination. This can be further explained by OT-DRO's hidden connection with generalized Lipschitz regularization~\cite{cranko2021generalised}. Thus, understanding if a well-known regularization technique is actually distributionally robust and in what sense, allows us to understand its out-of-distribution benefits and potentially introduce improvements.
%(e.g. norm regularization) 
% Although the existing works significantly advance our understanding of the modeling power of the conventional optimal transport-based DRO, 

In this paper, we introduce a novel set of regularization techniques which incorporate martingale constraints into the OT-DRO framework. Our starting point is the conventional OT-DRO formulation. The conventional OT-DRO formulation can generally be interpreted as perturbing  each data point in such a way that the average size perturbation 
% (measured in the squared of the norm for the type-2 Wasserstein distance) 
is less than a given budget. In addition to this conventional formulation, we will impose a \textit{martingale constraint} in the joint distribution of the empirical data and the resulting adversarially perturbed data. 

{\it{Why do we believe that the martingale constraint makes sense as a regularization technique?}} It turns out that two random variables $X$ and $\bar{X}$ form a martingale in the sense that $\E[\bar{X}|X]=X$ if and only if the distribution of $\bar{X}$ dominates $X$ in convex order~\cite{strassen1965existence}. In this sense, the adversary $\bar{X}$ will have higher dispersion in non-parametric sense than the observed data $X$ but in a suitably constrained way so that the average locations are preserved. This novel OT-DRO constrained regularization, we believe, is helpful to potentially combat conservative solutions, see \cite{liu2021distributionally}. Moreover, by allowing a small amount of violation in the martingale property, we can control the regularization properties of this constraint, thus obtaining a natural interpolation towards the conventional OT-DRO formulation and potentially improved regularization performance. We point out that related optimal transport problems with martingale constraints have been studied in robust mathematical finance~\cite{beiglbock2013model, dolinsky2014martingale}.

Consider, for example, the linear regression setting with the exact martingale constraints,
% Paraphrasing the earlier discussion, the martingale constraint
which means that for any given observed data point, the conditional expectation of the additive perturbation under the worst-case joint distribution equals zero. Surprisingly, we show that the resulting martingale DRO model is {\it{exactly}} equivalent to the ridge regression~\cite{mcdonald2009ridge} with the Tikhonov regularization. To the best of our knowledge, this paper is the first work to interpret the Tikhonov regularization  from a DRO perspective showing that it is  distributionally robust in a precise non-parametric sense. In stark contrast, it is well-known that the conventional OT-based DRO model (\textit{without} the martingale constraint) is identical to the regularized square-root regression problem~\cite{blanchet2019robust}. Therefore, introducing an additional power in norm regularization (i.e., converting square-root regression to Tikhonov regularization) can be translated into adding martingale constraints in the adversarial perturbations thus reducing the adversarial power. A natural question that arises here is whether we can interpolate between the conventional DRO model and the Tikhonov regularization, and further improve them.

We will provide a comprehensive and positive answer to the above question in this paper. The key idea here is to relax the equality constraint on the conditional 
expectation of the adversarial violation and thus allow a small perturbation of the martingale property to gain more flexibility of the uncertainty set. This idea leads to another novel model, termed the perturbed martingale DRO in the sequel. Intuitively, if the relaxation is sufficiently loose, the perturbed martingale DRO model will reduce to the conventional DRO, which is formally equivalent to setting an infinite amount of possible violations for the martingale constraint. By contrast, if no violation is allowed, the perturbed martingale DRO will automatically reduce to the exact counterpart --- Tikhonov regularization. 
As a result, we are able to introduce a new class of regularizers via the interpolation between the conventional DRO model and the Tikhonov regularization. 

Furthermore, such insightful interpolation also works for a broad class of nonlinear learning models. Inspired by our extensive exploration of linear regression, the developed martingale DRO model can also provide a new principled adversarial training procedure for deep neural networks. 
% To realize these modeling benefits in practice, we also propose easily implemented computational paradigms to address the result optimization problem efficiently. 
Extensive experiments are conducted to demonstrate the effectiveness of the proposed perturbed martingale DRO model for both linear regression and deep neural network training under the adversarial setting. 

% \begin{comment}
% On another font, the conventional optimal transport-based DRO models will usually suffer from  the conservativeness issue~\cite{liu2021distributionally}. The proposed martingale constraint is also a promising way to shrink the vanilla uncertainty set  Conceptually, the conventional DRO can be interpreted as the result of optimally transporting mass from the empirical measure in order to maximize a certain loss under a whole budget constraint over all data points. Differently, the martingale constraint is trying to further limit the individual violations on each data points. 
% As such, 
% Notably,  such a class of martingale type constraints have already been extensively explored in  \cite{}. 
% \end{comment}

We summarize our main contributions as below: 
\begin{itemize}[leftmargin=5mm]
\item  We reveal a new hidden connection in this paper, that is, Tikhonov regularization is optimal transport robust when exact martingale constraints (i.e., convex order between the adversary and empirical data) are imposed.
\item  Upon this finding, we develop a new \textit{perturbed} martingale DRO model, which not only provides a unified viewpoint of existing regularization techniques, but also leads to a new class of robust regularizers.
\item We introduce an easy-to-implement computational approach to capitalize the theoretical benefits in practice, in both linear regression and neural network training under the adversarial setting. 
\item  As a byproduct, the strong duality theorem, which is proved in this paper and is used as the main technical tool, can be applied to a wider spectrum of problems of independent interest. 
\end{itemize}

% In this paper, we will answer and fill this gap from a rather new perspective --- martingale. 

% In order to decouple the hidden structure, let us walk through a simple application
% in a familiar context, namely, that of linear regression. 
\begin{comment}
can be deleted!
\end{comment}
% The paper is organized as follows. In Section \ref{sec:prelim}, we introduce the basic concepts and state the strong duality theorem as our main technical tool. In Section \ref{sec: tractable reform}, we prove that Tikhonov regularization is optimal transport robust under Martingale constraints and further introduce the perturbed Martingale DRO model. In Section \ref{sec: opt_algo}, we describe the easily implemented optimization algorithms to solve the perturbed Martingale DRO problem. In section \ref{sec:numerical}, we validate our method under adversarial setting for linear regression and deep neural network training. 
%  We end with some closing remarks in Section \ref{sec:rmk}.
% The proof and experiments details are collected in Appendix.

% \vspace{-\baselineskip}
\section{Preliminaries}
% \vspace{-0.5\baselineskip}
\label{sec:prelim}
Let us introduce some basic definitions and concepts preparing for the subsequent analysis. 
% Let $\ell(\cdot) $ be a loss function and suppose that 
\begin{definition}[Optimal transport costs and the Wasserstein distance~\cite{peyre2019computational,villani2009optimal}] \label{def:wass}
Suppose that  $c(\cdot,\cdot):\R^d \times \R^d \rightarrow [0, \infty]$ is a lower semi-continuous cost function such that $c(X,X) =0$ for every $X\in\R^d$. The optimal transport cost between  two distributions $\QQ$ and  $\PP$ supported on $\R^d$ is defined as
\begin{equation*}
D\left( \QQ, \PP\right)  \Let \min_{\pi\in \mathcal{P}(\mathcal{X} \times \mathcal{X})} \left\{  \E_{\pi}\left[c(\bar{X}, X)\right] : P_1\pi = \QQ,~P_2 \pi = \PP \right\}.
\end{equation*}
Here, $\mathcal{P}(\mathcal{X} \times \mathcal{X})$ is the set of joint probability distribution $\pi$ of $(\bar{X},X)$ supported on $\mathcal{X} \times \mathcal{X}$ while $P_1 \pi$ and $P_2\pi$ respectively refer to the marginals of $\bar{X}$ and $X$ under the joint distribution $\pi$.  
\end{definition}
 If $c(X,\bar{X}) = \|\bar{X} - X \|$ is any given norm on $\mathbb{R}^d$, then $D$ recovers the Wasserstein distance~\cite{villani2009optimal}. In this paper, we are interested in a flexible family of functions for the computational tractability, so called the Mahalanobis cost functions in the form of 
$
c(\bar{X}, X) = (X-\bar{X})^\top M (X-\bar{X}),
$
where $M$ is a $d$-by-$d$ positive definite matrix. 

Next, we consider the conventional OT-DRO problem:
\be \label{eq:dro}
    \min_\beta \mathbb{L}_\beta(\Pnom, \rho), \quad \text{where} \quad \mathbb{L}_\beta(\Pnom, \rho)  \Let \left\{
    \begin{array}{cl}
        \sup\limits_\pi & \E_\pi[\ell(f_\beta(\bar{X}))] \\ [1.2ex]
        \st &  \pi \in  \mathcal{P}(\mathcal{X}\times\mc{X}) \\ [1.2ex]
        & \E_{\pi}\left[c(\bar{X},X)\right] \le \rho,~P_2 \pi = \Pnom,
    \end{array}
    \right.
\ee
where $\Pnom \Let \frac{1}{N} \sum_{i=1}^N \delta_{X_i}$ is the empirical distribution.
Using Definition~\ref{def:wass}, we have $\mathbb{L}_\beta(\Pnom, \rho) = \max_{\QQ: D(\QQ, \Pnom)\le \rho}~\E_{\Q}[\ell(f_\beta(\bar X))]$, which is the worst-case expected loss under all possible distributions around the empirical measure $\Pnom$ at most $\rho$ with respect to the OT distance. It is well-known that under appropriate assumptions, the DRO problem~\eqref{eq:dro} is equivalent to the regularized square-root regression problem.
\begin{proposition}[{\cite[Proposition 2.]{blanchet2019robust}}]
\label{prop:square_root}
    Suppose that (i) the loss function $\ell(\cdot)$ is a convex quadratic function, i.e., $\nabla^2 \ell(\cdot) = \gamma > 0$, where $\gamma$ is a constant, (ii) the feature mapping $f_\beta(\bar{X}) = \beta^\top \bar{X}$ is linear, and (iii) the ground cost $c$ is the squared Euclidean norm on $\mc X = \R^d$. Then 
    \[
       \mathbb{L}_\beta(\Pnom, \rho) =  \left(\sqrt{\E_{\Pnom}[\ell(f_\beta(X))]} + \sqrt{\rho} \| \beta \|_2 \right)^2.
    \]
    % Further, the DRO problem~\eqref{eq:dro} is equivalent to the regularized square-root regression problem.
    % \Sirui{Is this really LASSO? or just square root linear regression}
\end{proposition}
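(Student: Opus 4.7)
The plan is to prove the identity by invoking strong duality for the optimal-transport DRO problem, evaluating the resulting inner supremum in closed form thanks to the convex-quadratic structure of $\ell$, and then optimizing the outer dual variable $\lambda$. This matches the general template for OT-DRO reductions and leverages the very duality theorem that the paper develops as its main technical tool.

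First I would write the primal problem~\eqref{eq:dro} in its Kantorovich form and dualize the transport-budget constraint with a multiplier $\lambda\ge 0$. Under standard lower-semicontinuity and growth assumptions (which hold here since $\ell\circ f_\beta$ is a convex quadratic and $c$ is the squared Euclidean norm), strong duality yields
\[
\mathbb{L}_\beta(\Pnom,\rho)=\min_{\lambda\ge 0}\Bigl\{\lambda\rho+\E_{\Pnom}[\phi_\lambda(X)]\Bigr\},\qquad \phi_\lambda(X)\Let\sup_{\bar X\in\R^d}\bigl\{\ell(\beta^\top \bar X)-\lambda\|\bar X-X\|_2^2\bigr\}.
\]
Because $\bar X$ only enters through $\beta^\top\bar X$ in the reward and through $\|\bar X-X\|_2^2$ in the penalty, I would reduce the inner problem to a scalar one by writing $\bar X=X+v$ and optimizing over $t=\beta^\top v$ with the minimum-norm choice $v=(t/\|\beta\|_2^2)\beta$. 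This turns $\phi_\lambda(X)$ into $\sup_{t\in\R}\{\ell(\beta^\top X+t)-(\lambda/\|\beta\|_2^2)\,t^2\}$, a concave quadratic in $t$ provided $\lambda\ge\gamma\|\beta\|_2^2/2$, and the supremum is $+\infty$ otherwise.

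Next I would carry out the one-dimensional maximization in $t$: expanding $\ell(\beta^\top X+t)=\ell(\beta^\top X)+\ell'(\beta^\top X)\,t+\tfrac{\gamma}{2}t^2$ and completing the square gives
\[
\phi_\lambda(X)=\ell(\beta^\top X)+\frac{\ell'(\beta^\top X)^2\,\|\beta\|_2^2}{2\,(2\lambda-\gamma\|\beta\|_2^2)}.
\]
Using the fact that for a convex quadratic loss attaining value zero (e.g.\ the squared residual form implicit in regression) one has the Fenchel-type identity $\ell'(u)^2=2\gamma\ell(u)$, taking expectations with respect to $\Pnom$ gives a function of $\lambda$ of the form $\lambda\rho+L\cdot\frac{2\lambda}{2\lambda-\gamma\|\beta\|_2^2}$, where $L\Let\E_{\Pnom}[\ell(\beta^\top X)]$.

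Finally, I would minimize this scalar function over $\lambda>\gamma\|\beta\|_2^2/2$. After the substitution $s=2\lambda-\gamma\|\beta\|_2^2$ the objective decouples into an AM--GM structure whose minimizer is $s^\star=\|\beta\|_2\sqrt{2L\gamma/\rho}$, and plugging back recovers the compact form $L+2\sqrt{\rho L}\,\|\beta\|_2+\rho\|\beta\|_2^2=(\sqrt L+\sqrt\rho\,\|\beta\|_2)^2$, as desired. The main obstacle is the strong duality step: one must verify that the dual has no gap even though the loss is unbounded in $\bar X$, which is why the dual only yields a finite value on the half-line $\lambda\ge\gamma\|\beta\|_2^2/2$. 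Care must also be taken with the degenerate case $L=0$ (or $\|\beta\|_2=0$), in which the optimizer lies on the boundary of the feasible region for $\lambda$; a direct continuity/limit argument then fills in the formula.
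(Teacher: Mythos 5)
The paper never proves this proposition itself --- it is imported verbatim from \cite{blanchet2019robust} --- so the relevant comparison is with the paper's own duality computations for the neighbouring results (the appendix proofs of Proposition~\ref{prop:original} and Theorem~\ref{thm:relaxed}, and Remark~\ref{rmk}, which recovers Proposition~\ref{prop:square_root} as the $\epsilon^2 \ge N\rho$ case). Your route is exactly that one: dualize the transport budget, reduce the inner supremum to a scalar problem along the direction $\beta$ via the minimum-norm choice of perturbation, and minimize the dual objective over $\lambda$ in closed form. The reduction $\phi_\lambda(X) = \sup_t\{\ell(\beta^\top X + t) - \lambda t^2/\|\beta\|_2^2\}$, the threshold $\lambda > \gamma\|\beta\|_2^2/2$, the completed square, and the AM--GM step are all correct.

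There is, however, a concrete slip in the last line. Your own algebra, with $s = 2\lambda - \gamma\|\beta\|_2^2$ and $L = \E_{\Pnom}[\ell(\beta^\top X)]$, gives
\begin{equation*}
\min_{\lambda > \gamma\|\beta\|_2^2/2}\ \lambda\rho + L\,\frac{2\lambda}{2\lambda-\gamma\|\beta\|_2^2}
\;=\; L + \|\beta\|_2\sqrt{2\gamma\rho L} + \frac{\gamma\rho}{2}\|\beta\|_2^2
\;=\; \Bigl(\sqrt{L} + \sqrt{\gamma\rho/2}\,\|\beta\|_2\Bigr)^2,
\end{equation*}
not $\bigl(\sqrt{L} + \sqrt{\rho}\,\|\beta\|_2\bigr)^2$; the two coincide only under the normalization $\gamma = 2$, i.e., for the squared-residual loss $\ell(u) = (u-u_0)^2$ of \cite{blanchet2019robust} --- which is also precisely the loss for which your identity $\ell'(u)^2 = 2\gamma\,\ell(u)$ holds (you correctly note it needs the quadratic to attain the value zero). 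As literally stated for general $\gamma$, the claimed formula cannot be right: rescaling $\ell \mapsto c\,\ell$ multiplies the left-hand side by $c$ but not the right-hand side. This imprecision is inherited from the proposition's own phrasing, but a complete proof should either state the conclusion as $\bigl(\sqrt{L} + \sqrt{\gamma\rho/2}\,\|\beta\|_2\bigr)^2$ (consistent with the $\gamma$-dependence appearing in Proposition~\ref{prop:original}) or impose $\gamma = 2$ explicitly before asserting the displayed identity, rather than silently dropping the factor when "plugging back."
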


We also present the strong duality result for a general 
class of optimal transport based DRO models with martingale constraints. This result serves as our main technical tool for reformulating the DRO models, and it can also be applied to other semi-infinity structured DRO models, which could be of independent interests. 
%\viet{$f$ is not defined here yet --- Let $f$ be a generic function.?}
We consider the primal problem
\begin{equation} \label{eq:martinagle}
    \begin{array}{cll}
        \mathop{\sup}\limits_{\pi} & \int_{\mathcal{X}\times \mathcal{X}} f(\bar{X}) ~\dd \pi\\
        \st & \pi \in  \mathcal{P}(\mathcal{X}\times \mc{X}) \\ 
        & \int_{\mathcal{X}\times \mc{X}} c(\bar{X},X) ~\dd \pi \leq \rho, ~~ P_2 \pi = \Pnom\\
        % & \int_{\mathcal{X}\times S} (\bar{X}-X-\eta(S))d\pi(\bar{X},X) = 0, \forall S \subseteq \mathcal{X}. 
        & \E_{\pi}[\bar{X}|X] = X~~~ \Pnom\text{-a.s.} %\int_{\mathcal{X}} \bar{X} \QQ^i (\dd\bar{x}) = X_i + \eta_i & \forall i \in [N],
        \tag{Primal}
    \end{array}
\end{equation}
and its associated dual form
\begin{equation} \label{eq:dual}
    \inf_{\substack{\lambda \in \R_+ \\ \alpha_i \in \R^d~\forall i}} \lambda\rho+\sum_{i=1}^N\alpha_i^\top X_i+\frac{1}{N}\sum_{i=1}^N\sup_{\bar{X}} \left[f(\bar{X})-\alpha_i^\top\bar{X} -\lambda c(\bar{X},X_i)\right].
    \tag{Dual}
\end{equation}
Here, $f:\mathcal{X} \rightarrow \R$ is  upper semi-continuous and $L^1$-integrable. The next theorem states the strong duality result linking these two problems.
\begin{theorem}[Strong duality] 
\label{thm:strong_duality}
Let $\Pnom\Let\frac{1}{N}\sum_{i \in [N]} \delta_{X_i}$ be the reference measure. Suppose that (i) every sample point is in the interior of the cone generated by $\mc X$, i.e., $X_i\in  \textnormal{int}(\textnormal{cone}(\mathcal{X}))~ \forall i \in [N]$,
%(ii) the parameters $\eta_i$ satisfy $X_i+\eta_i \in  \textnormal{int}(\textnormal{cone}(\mathcal{X}))~ \forall i \in [N]$, 
and (ii) the ambiguity radius $\rho>0$. Then the
strong duality holds, i.e., $\textnormal{Val\eqref{eq:martinagle}} = \textnormal{Val\eqref{eq:dual}}$.
\end{theorem}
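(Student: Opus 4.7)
My plan is to reduce the problem to a family of one-dimensional martingale moment problems and then combine moment-problem strong duality with Lagrangian duality on the scalar optimal-transport budget. Because $\Pnom = \frac{1}{N}\sum_i \delta_{X_i}$ is atomic, I can disintegrate any feasible $\pi$ for \eqref{eq:martinagle} as $\pi = \frac{1}{N}\sum_{i=1}^N \pi_i \otimes \delta_{X_i}$, where each conditional $\pi_i \in \mathcal{P}(\mathcal{X})$ is the law of $\bar X$ given $X = X_i$. In these coordinates \eqref{eq:martinagle} rewrites as
\begin{equation*}
\sup_{\pi_1,\ldots,\pi_N \in \mathcal{P}(\mathcal{X})} \frac{1}{N}\sum_{i=1}^N \int f(\bar X)\, d\pi_i \quad \st \quad \frac{1}{N}\sum_i \int c(\bar X, X_i)\, d\pi_i \le \rho, \;\; \int \bar X\, d\pi_i = X_i \;\forall i.
\end{equation*}

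Next, I would dualize only the single scalar budget inequality with a multiplier $\lambda \ge 0$. Because $\rho > 0$ and the trivial choice $\pi_i = \delta_{X_i}$ yields zero transport cost while trivially satisfying every martingale equality, Slater's condition holds for the budget relative to the martingale-feasible set, so a standard Lagrangian argument justifies the identity $\textnormal{Val}\eqref{eq:martinagle} = \inf_{\lambda \ge 0}\bigl[\lambda \rho + \frac{1}{N}\sum_i V_i(\lambda)\bigr]$, where
\begin{equation*}
V_i(\lambda) \Let \sup_{\pi_i \in \mathcal{P}(\mathcal{X}):\, \int \bar X\, d\pi_i = X_i} \int \bigl[ f(\bar X) - \lambda c(\bar X, X_i)\bigr]\, d\pi_i(\bar X).
\end{equation*}
To each such one-sample problem I would apply a moment-problem strong duality of Isii/Choquet type, yielding
\begin{equation*}
V_i(\lambda) = \inf_{\alpha_i \in \R^d}\; \alpha_i^\top X_i + \sup_{\bar X \in \mathcal{X}}\bigl[ f(\bar X) - \alpha_i^\top \bar X - \lambda c(\bar X, X_i)\bigr].
\end{equation*}
Reassembling the two levels of duality (and rescaling the $(\alpha_i)$ appropriately) recovers \eqref{eq:dual}.

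The main obstacle is closing the gap in the moment-problem step. Weak duality is immediate by inserting $\alpha_i^\top(X_i - \bar X)$ into the integrand and supremizing in $\bar X$; strong duality, however, requires the hypothesis $X_i \in \textnormal{int}(\textnormal{cone}(\mathcal{X}))$ to rule out boundary pathology. I would use this condition to prove that the dual infimum over $\alpha_i$ is attained on a bounded set: any minimizing sequence $\{\alpha_i^{(k)}\}$ with $\|\alpha_i^{(k)}\| \to \infty$ in some unit direction $d$ would, by cone interiority, admit a point $\bar X \in \mathcal{X}$ satisfying $d^\top(\bar X - X_i) > 0$, forcing the dual objective to diverge to $+\infty$ and contradicting minimality. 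Upper semicontinuity and $L^1$-integrability of $f$, combined with the growth of the cost $c$, then render the inner $\sup_{\bar X}$ well-posed, and a measurable-selection step glues the optimal conditionals $\pi_i^\star$ into a globally optimal joint law. A subsidiary check is that the outer Lagrangian interchange survives regions where some $V_i(\lambda) = +\infty$; this is handled by the monotonicity and convexity of $V_i(\cdot)$, since sufficiently large $\lambda$ makes the transport penalty dominate and each $V_i(\lambda)$ becomes finite.
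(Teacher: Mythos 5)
Your two-level architecture is genuinely different from the paper's: you dualize the scalar transport budget first (this level is sound --- the coupling $\frac{1}{N}\sum_{i}\delta_{(X_i,X_i)}$ is martingale-feasible with zero cost $<\rho$, so a two-dimensional separation argument applies) and then invoke per-sample moment duality, whereas the paper recasts \emph{all} constraints (mass, martingale, budget) at once as a conic linear program over $M_+(\mathcal{X}\times\mathbb{X})$ and applies the semi-infinite conic duality of \cite[Proposition 3.4]{shapiro2001duality}, checking the generalized Slater condition $b\in\textnormal{int}[\mathcal{A}(M_+(\mathcal{X}\times\mathbb{X}))-K]$ with the help of Lemma~\ref{lm:range} ($\textnormal{cone}(\mathcal{X})\subseteq\textnormal{Range}(F)$ for $F(\mu)=\int \bar X\,\dd\mu$). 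The outer Lagrangian step and the per-sample decomposition of your plan are fine; the gap is in how you use hypothesis (i) to close the moment-duality step.

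There are three problems there. First, a sign error: to force the dual objective to blow up along $\alpha_i=td$, $t\to\infty$, you bound the inner supremum from below by evaluating at a fixed $\bar X_0\in\mathcal{X}$, which gives $t\,d^\top(X_i-\bar X_0)+O(1)$; this diverges to $+\infty$ only when $d^\top(\bar X_0-X_i)<0$, the opposite of the inequality you invoke. Second, and more seriously, the condition ``for every direction $d$ there exists $\bar X\in\mathcal{X}$ with $d^\top(\bar X-X_i)<0$'' is precisely the statement $X_i\in\textnormal{int}(\textnormal{conv}(\mathcal{X}))$, which is strictly stronger than the theorem's hypothesis $X_i\in\textnormal{int}(\textnormal{cone}(\mathcal{X}))$. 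Take $\mathcal{X}=\{x\in\R:x\ge 1\}$ and $X_i=1$: then $\textnormal{cone}(\mathcal{X})=[0,\infty)$, so hypothesis (i) holds, yet no point of $\mathcal{X}$ lies below $X_i$; for the bounded u.s.c.\ function $f(\bar x)=\min\{\sqrt{\bar x-1},1\}$ the per-sample dual infimum is approached only as $\alpha_i\to+\infty$ and is \emph{not attained}, so minimizing sequences are genuinely unbounded. Your contradiction is thus unavailable, and the attainment your argument aims to establish is actually false under the stated hypotheses (the duality identity itself still holds in this example --- both sides vanish --- which shows the theorem does not need attainment, but your route does). Third, even where coercivity holds, dual attainment is not the same as a zero duality gap: in Isii-type theorems the interiority hypothesis enters through a separation argument in the moment cone generated by the vectors $(1,\bar X)$, $\bar X\in\mathcal{X}$ (normalization and mean treated jointly), where it guarantees the separating hyperplane is non-vertical. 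So the condition you actually need to verify is $(1,X_i)\in\textnormal{int}\bigl(\textnormal{cone}(\{1\}\times\mathcal{X})\bigr)$, and your proposal never connects the paper's hypothesis to it --- this joint verification is exactly what the paper's generalized Slater check accomplishes. (A minor further point: your closing claim that large $\lambda$ always renders $V_i(\lambda)$ finite fails when $f$ outgrows $c$, e.g.\ cubic $f$ against quadratic $c$; there both sides are $+\infty$ so nothing breaks, but the monotonicity remark as stated is not a proof.)
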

\begin{remark}
At the heart of our analysis tools is the abstract semi-infinite duality theory for conic linear program \cite[Proposition 3.4]{shapiro2001duality}. Notably, it will be tricky and subtle to reformulate our problem into the standard form and further carefully check the general Slater condition. Moreover, we indeed fill the technical gap in \cite[Theorem 4.2]{mohajerin2018data}. 
\end{remark}
% In the following sections, we assume $\mathcal{X} = \R^d$  to get rid of the mild regularity condition. 
 \vspace{-0.5\baselineskip}
\textbf{Notation.} We use $\PD^d$ to denote the set of $d$-by-$d$ positive definite matrices and $\norm{X}_{M} \Let\sqrt{X^\top M X }$ for any $X \in \mbb R^d, M \in \PD^d$; if $M$ is the identity matrix, then we omit $M$ and write $\norm{X} \Let \sqrt{X^\top X}$. We use $\delta_{X}$ to denote the Dirac measure at $X$ and let $\Pnom \Let \frac{1}{N} \sum_{i=1}^N \delta_{X_i}$ be the empirical measure constructed from sample $\{X_1,\dots,X_N\}$. We use $\mbb E_{P}$ to denote the integration over $P$: $\mbb E_{P}[f(X)] = \int_{\mc X} f(X) \dd P$. Specifically, for $(\bar X, X)$ following joint distribution $\pi$, $\mbb E_{\pi}[c(\bar X, X)] = \int_{\mc X \times \mc X} c(\bar X,X) \dd \pi$, $\mbb E_{\pi}[f(\bar X)] = \int_{\mc X \times \mc X} f(\bar X) \dd \pi$. We use $\ell(\cdot)$ to denote the loss function applied to the parametrized feature mapping $f_{\beta}$. Let $\nabla \ell(\cdot), \nabla^2 \ell(\cdot)$ be the first and second order derivative of $\ell(\cdot)$ respectively. Notably, we use $\mathbb{L}_\beta(\Pnom, \rho)$ to denote the objective function of the conventional DRO model \eqref{eq:dro}; we use $L_{\beta}(\Pnom, \rho)$ to refer to the exact martingale DRO model \eqref{eq:marti_mod}; we use $\mc L_\beta(\Pnom, \rho, \epsilon)$ to refer to the perturbed martingale DRO model \eqref{eq:relaxed}.

\vspace{-\baselineskip}
\section{Tractable Reformulations}\label{sec: tractable reform}
\vspace{-0.5\baselineskip}
\label{sec:trac}
In this section, we introduce an optimal transport-based DRO model with the exact martingale constraint at first. That is, on top  of the vanilla DRO model~\cite{blanchet2021optimal}, we add an additional martingale equality constraint on its coupling. It is surprisingly interesting to find out that  the resulting DRO approach is equivalent to empirical risk minimization with Tikhonov regularization. Naturally, we can relax the equality constraint and thus allow  a small violation of the martingale property to enrich the uncertainty set. Formally, by sending violation size to infinity in the martingale constraint, our relaxation allows to interpolate between the conventional DRO formulation (i.e. with no martingale constraints) and Tikhonov regularization (which involves exact martingale constraints). Therefore, this relaxation  further leads to a new class of regularizers in a principled way, which improves upon Tikhonov regularization as we show in our experiments.
\begin{assumption} 
\label{ass:sec_trac}
The following assumptions hold throughout.
\begin{enumerate}[label=(\roman*), leftmargin=10mm]
    % \item The reference measure $\Pnom$ is the empirical distribution constructed from the training data points $\mbb{X} = \left\{X_1,X_2, \cdots, X_N\right\}$, i.e., $\Pnom = \frac{1}{N} \sum_{i=1}^N \delta_{X_i}$,
    \item The ground cost $c(\cdot,\cdot)$ is the Mahalanobis cost with the weighting matrix $M \in \PD^d$,
    \item The domain $\mc X$ is unconstrained, i.e., $\mathcal{X} = \R^d$.
\end{enumerate}
\end{assumption}
% \vspace{-\baselineskip}
\subsection{Optimal Transport-based DRO with Martingale Constraints}
\label{sec:ex_marti}
% \vspace{-0.5\baselineskip}
To start with, we investigate the exact martingale DRO problem:
\be
\label{eq:marti_mod}
    \min_{\beta} L_{\beta}(\Pnom, \rho), \qquad \text{where} \quad  L_{\beta}(\Pnom, \rho) \Let \left\{
    \begin{array}{cl}
        \sup\limits_\pi & \E_\pi[\ell(f_\beta(\bar{X}))] \\ [1.2ex]
        \st &  \pi \in  \mathcal{P}(\mathcal{X}\times\mc{X}) \\ [1.2ex]
        & \E_{\pi}\left[c(\bar{X},X)\right] \le \rho,~P_2 \pi = \Pnom \\ [1.2ex]
        & \E_{\pi}[\bar{X}|X] = X \quad \Pnom\text{-a.s.},
    \end{array}
    \right.
\ee
and $\rho \ge 0$ is the radius of uncertainty set centered at $\Pnom$. Note that because $\Pnom$ is the empirical measure, the martingale constraint implies that the \textit{conditional} expected value of the perturbation obtained by modifying each observed data point equals the observed data point itself. The quantity $L_\beta(\Pnom, \rho)$ is referred to as the worst-case expected loss of the model parameter $\beta$ under the martingale DRO model. It is easy to see that $L_\beta(\Pnom, \rho) \le \mathbb{L}_\beta(\Pnom, \rho)$, where $\mathbb{L}_\beta$ is defined as in~\eqref{eq:dro}. This is because the adversary in~\eqref{eq:marti_mod} has a smaller feasible set, and thus is less powerful than the adversary in~\eqref{eq:dro}. Hence, the martingale DRO solution for problem~\eqref{eq:marti_mod} is considered to be \textit{less} conservative than the conventional DRO solution for problem~\eqref{eq:dro}.

The next result asserts that the martingale DRO problem coincides with the Tikhonov regularization problem under similar conditions of Proposition~\ref{prop:square_root}.

\begin{proposition}[Tikhonov equivalence]
\label{prop:original}
Suppose that (i) the loss function $\ell(\cdot)$ is a convex quadratic function, i.e., $\nabla^2 \ell(\cdot) = \gamma > 0$, and (ii) the feature mapping $f_\beta(\bar{X}) = \beta^\top \bar{X}$ is linear. 
Then we have
\begin{equation*}
    L_\beta(\Pnom, \rho)  =\E_{\Pnom}[\ell(\beta^\top X)] + \frac{\gamma \rho}{2}\|\beta\|^2_{M^{-1}}.
\end{equation*}
If the Mahalanobis matrix $M$ is the identity matrix, the martingale DRO model \eqref{eq:marti_mod} recovers the Tikhonov regularization problem. 
\end{proposition}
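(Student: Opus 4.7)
The plan is to exploit the two structural features simultaneously at play: the martingale constraint forces the perturbation to be mean-zero conditional on each sample, and the quadratic loss makes a Taylor expansion exact. Together, these should reduce the infinite-dimensional optimization over $\pi$ to a tractable semidefinite problem over the second-moment matrix of the perturbation, which admits a closed-form solution via a generalized Rayleigh quotient.

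Concretely, I would first change variables to $\xi \Let \bar{X}-X$, so the martingale constraint reads $\E_\pi[\xi\mid X]=0$ $\Pnom$-a.s.\ and the transport budget reads $\E_\pi[\xi^\top M \xi]\le \rho$. Since $\ell$ is quadratic with $\nabla^2\ell=\gamma$, the identity
\[
\ell(\beta^\top \bar X) = \ell(\beta^\top X) + \ell'(\beta^\top X)\,\beta^\top\xi + \tfrac{\gamma}{2}(\beta^\top \xi)^2
\]
holds with no remainder. Conditioning on $X$, applying the tower property, and invoking the martingale condition kills the linear term, leaving
\[
\E_\pi[\ell(\beta^\top \bar X)] = \E_{\Pnom}[\ell(\beta^\top X)] + \tfrac{\gamma}{2}\,\beta^\top \Sigma \beta, \qquad \Sigma \Let \E_\pi[\xi\xi^\top].
\]
Because the cost constraint only sees $\E_\pi[\xi^\top M\xi] = \mathrm{tr}(M\Sigma)$, the remaining problem is $\sup\{\beta^\top \Sigma \beta : \Sigma\succeq 0,\; \mathrm{tr}(M\Sigma)\le \rho\}$. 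The substitution $T = M^{1/2}\Sigma M^{1/2}$ rewrites this as $\sup\{(M^{-1/2}\beta)^\top T (M^{-1/2}\beta) : T\succeq 0,\; \mathrm{tr}(T)\le \rho\}$, whose value is $\rho\,\norm{\beta}_{M^{-1}}^2$, attained by the rank-one choice $T \propto M^{-1/2}\beta\beta^\top M^{-1/2}$. Plugging back gives the claimed $\E_{\Pnom}[\ell(\beta^\top X)]+\tfrac{\gamma\rho}{2}\norm{\beta}_{M^{-1}}^2$.

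The step I expect to require the most care is certifying that the bound produced by the reduction over $\Sigma$ is actually achievable by a genuine coupling $\pi\in\mathcal{P}(\mathcal{X}\times\mathcal{X})$ with marginal $\Pnom$ and the conditional mean-zero property. I would close this gap by exhibiting the optimizer explicitly: for each empirical point $X_i$, put a symmetric two-point conditional law on $\xi$ supported at $\pm\sqrt{\rho/\norm{\beta}_{M^{-1}}^2}\,M^{-1}\beta$. A direct computation shows this law has conditional mean zero, attains the cost budget with equality, and saturates the Rayleigh inequality, which proves tightness. As a sanity check one could instead invoke the strong duality result of Theorem~\ref{thm:strong_duality} and solve the resulting inner supremum in closed form (the quadratic in $\bar X$ is maximized by a linear system in $\lambda,\alpha_i$), but the primal route above is noticeably cleaner in this quadratic-linear setting.
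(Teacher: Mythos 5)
Your proposal is correct and takes essentially the same (primal) route as the paper's own main-text proof: the paper likewise changes variables to $\Delta=\bar X - X$, uses the exact quadratic expansion so that the martingale constraint annihilates the linear term, bounds the remaining term via $(\beta^\top\Delta)^2\le\|\beta\|_{M^{-1}}^2\|\Delta\|_M^2$ (your trace-constrained relaxation over $\Sigma=\E_\pi[\xi\xi^\top]$ is just a matrix restatement of this Cauchy--Schwarz/H\"older step), and then exhibits an attaining coupling, while your closing remark about invoking Theorem~\ref{thm:strong_duality} is exactly the paper's alternative appendix proof. The only substantive difference is the attaining coupling: the paper takes a Gaussian $\Delta=CM^{-1}\beta$ with $\mathrm{Var}(C)=\rho$, which as written saturates the budget only when $\|\beta\|_{M^{-1}}=1$, whereas your symmetric two-point law at $\pm\sqrt{\rho/\|\beta\|_{M^{-1}}^2}\,M^{-1}\beta$ is correctly normalized.
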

\vspace{-0.2in}
\begin{proof}[Proof of Proposition~\ref{prop:original}]
By a change of the variable, let $\Delta = \bar{X}-X$ and we have
\begin{align*}
    \sup_{\substack{\E_\pi [\|\Delta\|_M^2]  \leq\rho\\\E_\pi[\Delta|X] =0 }} \E_\pi\left[\ell(\beta^\top (X+\Delta))\right] 
    & = \sup_{\substack{\E_\pi [\|\Delta\|_M^2]  \leq\rho\\\E_\pi[\Delta|X] =0 }} \E_\pi\left[\ell(\beta^\top X) + \nabla \ell(\beta^\top X)\beta^\top\Delta + \frac{\gamma}{2}\|\beta^\top \Delta\|^2\right]\\
    & = \E_{\Pnom}\left[\ell(\beta^\top X)\right] + \sup_{\substack{\E_\pi [\|\Delta\|_M^2]  \leq\rho\\\E_\pi[\Delta|X] =0 }} \E_\pi\left[ \frac{\gamma}{2}\|\beta^\top\Delta\|^2\right] \\
    & = \E_{\Pnom}\left[\ell(\beta^\top X)\right] +\frac{\gamma \rho}{2} \|\beta\|_{M^{-1}}^2.
\end{align*}
The last equality follows from the general Hölder's inequality. To achieve the equality, we can, for example, take a normally distributed random variable $C$ with mean 0 and variance $\rho$ and which is independent of $X$, and then let $\Delta = C M^{-1} \beta$.
\end{proof}
%with a little extension to Proposition \ref{prop:original},
\begin{example}[Linear regression] \label{ex:linear}
Let $X^\top \Let (Y,Z^\top) \in \R^d$ and $\beta^\top \Let (1, -b^\top) \in \R^d$, we have $\beta^\top X = Y - b^\top Z$. For any $Q \in \PD^{d-1}$, we take $M = \textnormal{diag}(+\infty, Q)$, which implies that we do not allow transport of the response $Y$, then the problem~\eqref{eq:marti_mod} with $\gamma = 2$ becomes
\begin{align*}
    \min_{b} \left\{\E_{\Pnom}\left[(Y - b^\top Z)^2\right] + \rho \|b\|_{Q^{-1}}^2\right\}.
\end{align*}
\end{example}
% \vspace{-0.5\baselineskip}
In Appendix \ref{sec:append_marti}, we give another instructive proof based on the strong duality result in Section~\ref{sec:prelim}. For general convex loss functions, we have the following certificate of robustness that provides upper and lower bound for the worst-case DRO loss in \eqref{eq:marti_mod}. 
\begin{corollary} [General convex loss functions]\label{cor: general convex function exact martingale}
Suppose that (i) the loss function $\ell(\cdot)$ is $\mu$-strongly convex and $C$-smooth, that is, $\ell(\theta) \ge \ell(\theta') + \nabla \ell(\theta')(\theta'-\theta) + \tfrac{\mu}{2}(\theta'-\theta)^2$ and $\ell(\theta) \leq \ell(\theta') + \nabla \ell(\theta')(\theta'-\theta) + \tfrac{C}{2}(\theta'-\theta)^2$ hold for all $\theta,\theta'\in \R$, and (ii) the feature mapping $f_\beta(\bar{X}) = \beta^\top \bar{X}$ is linear. 
% \Sirui{definition of $L$-smooth?}
%\viet{should it be $\nabla^2 \ell(\cdot) \ge \gamma > 0$ here?}.\Jiajin{No. If we use the inequality here, then we can also get the weak reformulation result $\leq$, see Section 4 for details. } 
Then we have for any $\rho \ge 0$,
\begin{equation*}
  \E_{\Pnom}[\ell(f_\beta(X))] + \frac{\mu \rho}{2}\|\beta\|^2_{M^{-1}} \leq  L_\beta(\Pnom, \rho) \leq \E_{\Pnom}[\ell(f_\beta(X))] + \frac{C \rho}{2}\|\beta\|^2_{M^{-1}}.
\end{equation*}
\end{corollary}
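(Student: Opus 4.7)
The plan is to mirror the proof of Proposition~\ref{prop:original} verbatim, but replace the exact quadratic Taylor expansion used there by the two-sided quadratic inequalities coming from strong convexity and smoothness. The key structural fact that made the Tikhonov argument work, namely that the martingale constraint $\E_\pi[\Delta\mid X]=0$ annihilates the linear term in the expansion of $\ell(\beta^\top(X+\Delta))$, carries over unchanged; only the quadratic term acquires a factor $\mu/2$ (from below) or $C/2$ (from above) instead of the common $\gamma/2$.

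Concretely, I would first change variables $\Delta = \bar X - X$ so the constraints become $\E_\pi[\|\Delta\|_M^2]\le\rho$ and $\E_\pi[\Delta\mid X]=0$. Applying $\mu$-strong convexity and $C$-smoothness with $\theta = \beta^\top \bar X$ and $\theta' = \beta^\top X$ yields, for every realization of $(X,\Delta)$,
\begin{equation*}
\ell(\beta^\top X) + \nabla\ell(\beta^\top X)\beta^\top\Delta + \tfrac{\mu}{2}(\beta^\top\Delta)^2 \;\le\; \ell(\beta^\top(X+\Delta)) \;\le\; \ell(\beta^\top X) + \nabla\ell(\beta^\top X)\beta^\top\Delta + \tfrac{C}{2}(\beta^\top\Delta)^2.
\end{equation*}
Taking $\E_\pi[\cdot]$ on all three sides and using the tower property together with $\E_\pi[\Delta\mid X]=0$ shows that the cross term $\E_\pi[\nabla\ell(\beta^\top X)\beta^\top\Delta]=0$, leaving
\begin{equation*}
\E_{\Pnom}[\ell(\beta^\top X)] + \tfrac{\mu}{2}\,\E_\pi[(\beta^\top\Delta)^2] \;\le\; \E_\pi[\ell(\beta^\top\bar X)] \;\le\; \E_{\Pnom}[\ell(\beta^\top X)] + \tfrac{C}{2}\,\E_\pi[(\beta^\top\Delta)^2].
\end{equation*}

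For the upper bound I would take the supremum of the right-hand side over all feasible $\pi$: by the generalized Hölder step in the proof of Proposition~\ref{prop:original}, the extremal value of $\E_\pi[(\beta^\top\Delta)^2]$ under the two martingale-DRO constraints equals $\rho\,\|\beta\|_{M^{-1}}^2$, which immediately delivers $L_\beta(\Pnom,\rho)\le \E_{\Pnom}[\ell(\beta^\top X)] + \tfrac{C\rho}{2}\|\beta\|_{M^{-1}}^2$. For the lower bound I would instead plug in the explicit coupling from that same proof, namely $\Delta = G\,M^{-1}\beta$ with $G\sim\mathcal N(0,\rho/\|\beta\|_{M^{-1}}^2)$ independent of $X$; this choice is feasible and achieves $\E_\pi[(\beta^\top\Delta)^2]=\rho\|\beta\|_{M^{-1}}^2$, so the left-hand inequality applied to this specific $\pi$ yields $L_\beta(\Pnom,\rho)\ge \E_{\Pnom}[\ell(\beta^\top X)] + \tfrac{\mu\rho}{2}\|\beta\|_{M^{-1}}^2$.

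There is no genuine obstacle here; the only point that deserves care is the asymmetric logic between the two bounds, since strong convexity and smoothness give bounds in opposite directions while $L_\beta(\Pnom,\rho)$ is a supremum. The upper bound uses a uniform-in-$\pi$ smoothness estimate and then maximizes; the lower bound requires exhibiting a single coupling that is simultaneously feasible and saturates the Hölder bound on $\E_\pi[(\beta^\top\Delta)^2]$, which is precisely the Gaussian construction already validated in Proposition~\ref{prop:original}.
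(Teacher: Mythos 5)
Your proposal is correct and follows essentially the same route as the paper: the corollary is exactly the two-sided version of the argument for Proposition~\ref{prop:original}, with the martingale constraint annihilating the linear term, the generalized Cauchy--Schwarz/H\"older bound $(\beta^\top\Delta)^2 \le \|\beta\|_{M^{-1}}^2\|\Delta\|_M^2$ giving the upper bound, and the Gaussian coupling along $M^{-1}\beta$ certifying the lower bound. Your normalization of the Gaussian variance as $\rho/\|\beta\|_{M^{-1}}^2$ is in fact the correct scaling needed for the coupling to saturate the budget $\E_\pi[\|\Delta\|_M^2]\le\rho$ while achieving $\E_\pi[(\beta^\top\Delta)^2]=\rho\|\beta\|_{M^{-1}}^2$ (the paper's stated variance $\rho$ is off by this factor), and your handling of the asymmetry between the supremum-based upper bound and the witness-based lower bound is exactly right.
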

\begin{example}[Logistic regression] Let $X = YZ \in \mbb R^d$, where $Z \in \mbb R^d, Y\in \{\pm 1\}$, and $\ell(t) = \log(1+\exp(-t))$, where $\ell(\cdot)$ satisfies Assumption (i) in Corollary~\ref{cor: general convex function exact martingale} with $C= \frac{1}{4}$. Then we have
\begin{equation*}
    L_\beta(\Pnom, \rho) \leq \E_{\Pnom}[\log(1+\exp(-Y \beta^\top Z))] + \frac{\rho}{8}\|\beta\|^2_{M^{-1}}.
\end{equation*}
\end{example}
% \vspace{-1.1\baselineskip}
\subsection{Optimal Transport-based DRO with Perturbed Martingale Constraints}
% \vspace{-0.5\baselineskip}
% It is well-known that ridge regression is not robust enough to outliers. Interestingly, we can also validate this folkloric result from a rather new perspective. When we focus on the vanilla optimal transport based DRO models, we usually rely on the robust Wasserstein profile function to guide us how to choose the optimal radius of the ambiguity set~\cite{blanchet2019robust}. Conceptually, we aim at finding the smallest radius $\rho$ such that with high probability the ground truth $\beta_*$ is a plausible selection, i.e., $\beta_* \in \arg\min_{\beta} \E_{\bar X \sim Q}[\ell(\beta^\top \bar X)]$ for some $Q$ in the uncertainty set centered at $\Pnom$ with radius $\rho$. Unfortunately, since the exact martingale constraint is too restrictive, we cannot find a feasible radius for \eqref{eq:marti_mod}, i.e., 
% \[
% \inf_\pi \left\{ \E_\pi\left[  \left\Vert \bar X - X\right\Vert _{M}^{2}\right]  :\E_\pi \left[
% \nabla \ell (\beta_{\ast}^\top \bar X)(\bar X)  \right]
% =0,~\E_\pi[\bar X | X] = X~~ \Pnom\text{-a.s.}\right\}  =\infty.
% \]
Now we turn to the relaxation of the martingale constraint to improve upon Tikhonov regularization and gain more flexibility. We consider the perturbed martingale coupling based DRO model (perturbed martingale DRO):
\be
\label{eq:relaxed}
    \min_{\beta}\mc L_\beta(\Pnom, \rho, \epsilon), \quad \text{where}  ~~ \mc L_\beta(\Pnom, \rho, \epsilon) \Let \left\{
    \begin{array}{cl}
        \sup \limits_{\pi} & \E_\pi[\ell(f_\beta(\bar{X}))] \\ [1.2ex]
        \st & \pi \in \mathcal{P}(\mc X \times \mc X) \\ [1.2ex]
        & \E_{\pi}\left[c(\bar{X},X)\right] \le \rho,~P_2{\pi} = \Pnom \\ [1.2ex]
        & \| \E_{\pi}[\bar{X}|X] - X \|_M\le \epsilon \quad \Pnom\text{-a.s.} 
    \end{array}
    \right.
\ee
The parameter $\epsilon$ controls the allowed violations of the martingale constraint for the adversary. It is trivial that if we set $\epsilon = 0$, then we obtain $\mc L_\beta(\Pnom, \rho, 0) = L_\beta(\Pnom, \rho)$ and we recover the exact martingale DRO model~\eqref{eq:marti_mod}. If we set $\epsilon = +\infty$ then the martingale constraint becomes ineffective, thus we have $\mc L_\beta(\Pnom, \rho, +\infty) = \mathbb{L}_\beta(\Pnom, \rho)$ and model~\eqref{eq:relaxed} collapses to the DRO formulation~\eqref{eq:dro}. We thus can think of $\epsilon$ as an interpolating parameter connecting two extremes: the conventional DRO model~\eqref{eq:dro} (at $\epsilon = +\infty$) and the exact martingale DRO model~\eqref{eq:marti_mod} (at $\epsilon = 0$).

However, the resulting optimization problem \eqref{eq:relaxed} constitutes an infinite-dimensional optimization problem over probability distributions and thus appears to be computationally intractable. To overcome this issue, 
we leverage Theorem \ref{thm:strong_duality} and prove that the problem \eqref{eq:relaxed} is actually equivalent to a finite-dimensional problem. To begin with, we present one crucial proposition that can be applied to more general settings. 
\begin{theorem} [General loss functions and feature mappings]
\label{prop:general}
Suppose that the loss function $\ell(\cdot)$ and the feature mapping $f_\beta(\cdot)$ are upper semi-continuous.  Then, for any $\rho> 0$ and $\epsilon > 0$, the perturbed martingale DRO model \eqref{eq:relaxed} admits 
\begin{small}
\begin{equation}
\begin{aligned}
\label{eq:perturbed_general}
     &\mc L_\beta(\Pnom, \rho, \epsilon)  = \inf_{\lambda\ge 0,\alpha}\lambda\rho+\frac{\epsilon}{N}\sum_{i=1}^N \|\alpha_i\|_{M^{-1}}+\frac{1}{N}\sum_{i=1}^N\sup_{\Delta_i} \left[\ell(f_\beta(X_i+\Delta_i))-\alpha_i^\top\Delta_i -\lambda \|\Delta_i\|_M^2\right]. 
\end{aligned}
\end{equation}
\end{small}
\end{theorem}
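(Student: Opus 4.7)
The plan is to exploit the empirical structure of $\Pnom$ to decouple the primal into per-sample subproblems, and then apply Lagrangian duality twice: first to the single global transport budget, and then to the $N$ relaxed martingale-ball constraints. Since $P_2\pi=\Pnom$, one can disintegrate $\pi=\frac{1}{N}\sum_{i=1}^N \delta_{X_i}\otimes\pi_i$ with conditional distributions $\pi_i$ on $\R^d$, turning \eqref{eq:relaxed} into
\begin{equation*}
\sup_{\{\pi_i\}}\ \frac{1}{N}\sum_{i=1}^N \E_{\pi_i}[\ell(f_\beta(\bar X))]\quad \text{s.t.}\quad \frac{1}{N}\sum_{i=1}^N \E_{\pi_i}[c(\bar X,X_i)]\le\rho,\ \|\E_{\pi_i}[\bar X]-X_i\|_M\le \epsilon\ \forall i.
\end{equation*}

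Next I would introduce a multiplier $\lambda\ge 0$ for the scalar transport-budget constraint. Strong duality for this single inequality follows from the standard Slater argument, which holds because $\rho>0$ makes $\pi_i=\delta_{X_i}$ strictly feasible. After dualizing, the problem separates across $i$ into subproblems of the form $\sup_{\pi_i:\|\E_{\pi_i}[\bar X]-X_i\|_M\le\epsilon}\E_{\pi_i}[\ell(f_\beta(\bar X))-\lambda c(\bar X,X_i)]$. For each subproblem I would introduce a slack vector $v_i\in\R^d$ with $\|v_i\|_M\le\epsilon$ and recast the constraint as the equality $\E_{\pi_i}[\bar X-X_i]=v_i$. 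Dualizing this equality with an unconstrained multiplier $\alpha_i\in\R^d$ and swapping the sup-inf yields two explicit maximizations: the sup over $v_i$ in the $M$-norm ball contributes $\epsilon\|\alpha_i\|_{M^{-1}}$ by the dual-norm identity, and the sup over probability measures $\pi_i$ on $\R^d$ of a linear functional collapses to a pointwise supremum over $\bar X\in\R^d$ (achieved, or approached, by Dirac masses). The change of variable $\Delta_i=\bar X-X_i$ then produces the bracketed expression in \eqref{eq:perturbed_general}, and combining everything gives the claimed formula.

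The hard part will be justifying the two sup-inf swaps rigorously: the second one lives on the infinite-dimensional measure space $\mc P(\R^d)$, the inner supremum can be $+\infty$ for bad choices of $\alpha_i$, and $\ell\circ f_\beta$ is only assumed upper semi-continuous, so a direct Sion-type argument is delicate. The cleanest remedy is to invoke Theorem~\ref{thm:strong_duality} on an augmented primal in which the slack $v_i$ is folded into the state so that the perturbed martingale ball becomes an exact martingale condition in the enlarged problem; equivalently, one can verify the hypotheses of the abstract conic-linear duality \cite[Proposition 3.4]{shapiro2001duality} used to prove Theorem~\ref{thm:strong_duality}. The Slater-type condition needed there is simultaneously satisfied by $\rho>0$ (strict feasibility of the transport budget) and $\epsilon>0$ (strict feasibility of the relaxed martingale constraint, since the trivial coupling $\pi_i=\delta_{X_i}$ gives $\E_{\pi_i}[\bar X-X_i]=0$ in the interior of the $\epsilon$-ball). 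Once strong duality is established, evaluating the dual in closed form as outlined above produces exactly \eqref{eq:perturbed_general}.
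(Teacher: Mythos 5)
Your proposal is correct and follows essentially the same route as the paper: your slack vector $v_i$ is precisely the paper's perturbation $\eta_i$ in its two-layer reformulation \eqref{eq:refor_relaxed}, and your fallback remedy --- invoking Theorem~\ref{thm:strong_duality} on the augmented primal where the ball constraint becomes an exact (perturbed-target) martingale condition, justifying the sup--inf swap (the paper uses Sion's minimax theorem, valid because the $\epsilon$-ball for $\eta$ is compact and the dual functional is linear in $\eta$ and convex in $(\lambda,\alpha)$), and then eliminating the slack in closed form via the dual-norm identity to obtain $\epsilon\|\alpha_i\|_{M^{-1}}$ --- is exactly the paper's proof. The only cosmetic difference is that your initial sketch dualizes the transport budget and the martingale constraints sequentially, whereas the paper dualizes them simultaneously through Theorem~\ref{thm:strong_duality}; you correctly flag that the sequential version is the delicate one and converge to the paper's argument.
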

% \vspace{-0.2in}
\begin{proof}[Sketch of proof]
The key step is to decouple \eqref{eq:relaxed} as a two-layer optimization problem: 
\begin{equation} \label{eq:refor_relaxed}
\begin{array}{ccl}
\mc L_\beta(\Pnom, \rho, \epsilon)  = \sup\limits_{\|\eta_i\|_M\leq \epsilon~\forall i} &\sup \limits_{\pi} &\int_{\mathcal{X}} \ell(f_\beta(\bar{X})) \dd \pi \\
& \text{s.t}  & \pi \in  \mathcal{P}(\mathcal{X}\times \mc{X}) \\
& & \int_{\mathcal{X}\times \mc{X}} c(\bar{X},X) \dd \pi\leq \rho,~ P_2\pi = \Pnom \\
& & \int_{\mathcal{X}\times \mc{X}}\mathbb{I}_{X_i}(X)\cdot\bar{X} \dd\pi = \frac{1}{N}(X_i+\eta_i)\quad  \forall i \in [N]. 
\end{array}
\end{equation}
Then we invoke Theorem \ref{thm:strong_duality} for the inner maximization problem over $\pi$ and apply the Sion's minimax theorem~\cite{sion1958general} for the outer maximization over $\eta$. The desired result is obtained. 
\end{proof}
% \vspace{-0.1in}
Due to the specific structure of the quadratic cost and the linear feature mapping, the dual variable $\lambda$ admits a closed-form representation and $\alpha \in \R^{N\times d}$ in \eqref{eq:perturbed_general} can be reduced to $N$ parallel one-dimensional optimization problems. Upon this observation, we thus conduct an instructive and intuitive reformulation in Theorem \ref{thm:relaxed} for linear regression with proof detailed in Appendix \ref{sec:proof_thm}. 
\begin{theorem} [Linear regression]
\label{thm:relaxed}
Suppose that (i) the loss function $\ell(\cdot)$ is a convex quadratic function, i.e., $\nabla^2 \ell(\cdot) = \gamma > 0$ and (ii) the feature mapping $f_\beta(\bar{X}) = \beta^\top \bar{X}$ is linear. 
Then, the perturbed martingale DRO model \eqref{eq:relaxed} admits: 
\begin{equation}
\label{eq:reformula_perturb}
 \mc L_\beta(\Pnom, \rho, \epsilon) = \E_{\Pnom}[\ell(f_\beta(X))]+ \frac{\rho\norm{\beta}_{M^{-1}}^2}{2} + 
 \alertinline{$R(\beta)$},
\end{equation}
where the additional regularizer is defined as  $R(\beta)\Let\|\beta\|_{M^{-1}}\min\limits_{s \in \R^N} \left(\frac{\epsilon }{N}\|s\|_1  + \sqrt{\frac{\rho}{N}}\|G_\beta -s\|_2\right)$ and $G_\beta =(\nabla \ell(\beta^\top X_1),\ldots,\nabla \ell(\beta^\top X_N))^\top \in \R^N$. 
\end{theorem}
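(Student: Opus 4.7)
The plan is to begin from the finite-dimensional dual formula~\eqref{eq:perturbed_general} provided by Theorem~\ref{prop:general}, then systematically eliminate the dual variables $\lambda$ and $\alpha$ by exploiting the rank-one structure that the linear feature map imprints on the inner quadratic program. Since $\ell$ is a true quadratic and $f_\beta(X)=\beta^\top X$, the expansion $\ell(\beta^\top(X_i+\Delta)) = \ell(\beta^\top X_i) + g_i\beta^\top\Delta + \tfrac{\gamma}{2}(\beta^\top\Delta)^2$ with $g_i \Let \nabla\ell(\beta^\top X_i)$ is exact, so the inner supremum in~\eqref{eq:perturbed_general} reduces to the maximum of an affine-plus-quadratic form in $\Delta_i$. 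Finiteness forces $\lambda M \succeq \tfrac{\gamma}{2}\beta\beta^\top$ (equivalently $2\lambda \geq \gamma\|\beta\|_{M^{-1}}^2$), and the maximum evaluates in closed form to $\tfrac14 v_i^\top A^{-1} v_i$ with $v_i \Let g_i\beta - \alpha_i$ and $A \Let \lambda M - \tfrac{\gamma}{2}\beta\beta^\top$.

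The heart of the argument is the observation that $A$ is a rank-one perturbation of $\lambda M$ along $\beta$. I would decompose $\alpha_i$ in the $M^{-1}$-inner product into a projection onto $\beta$, captured by the scalar $\tilde s_i \Let \beta^\top M^{-1}\alpha_i / \|\beta\|_{M^{-1}}$, and an orthogonal tail $\alpha_i^\perp$. Applying Sherman--Morrison then diagonalizes $v_i^\top A^{-1} v_i$ along these two directions: the orthogonal component contributes only $\|\alpha_i^\perp\|_{M^{-1}}^2/(4\lambda)$, while the parallel component depends on $\tilde s_i$ through $(g_i\|\beta\|_{M^{-1}} - \tilde s_i)^2/(2(2\lambda - \gamma\|\beta\|_{M^{-1}}^2))$. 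Since Pythagoras in the $M^{-1}$-norm gives $\|\alpha_i\|_{M^{-1}}^2 = \tilde s_i^2 + \|\alpha_i^\perp\|_{M^{-1}}^2$, the penalty $\tfrac{\epsilon}{N}\sum_i \|\alpha_i\|_{M^{-1}}$ is also monotone in $\|\alpha_i^\perp\|_{M^{-1}}$. Both penalties are therefore simultaneously minimized by $\alpha_i^\perp = 0$, which collapses the dual to the scalar variables $(\tilde s_i)_{i=1}^N$ and $\lambda$.

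To eliminate $\lambda$, I would substitute $\mu \Let 2\lambda - \gamma\|\beta\|_{M^{-1}}^2 > 0$, which peels the Tikhonov contribution $\tfrac{\gamma\rho}{2}\|\beta\|_{M^{-1}}^2$ out of $\lambda\rho$ and leaves a scalar objective of the AM--GM-friendly shape $\tfrac{\mu\rho}{2} + \tfrac{1}{2N\mu}\sum_i (g_i\|\beta\|_{M^{-1}} - \tilde s_i)^2 + \tfrac{\epsilon}{N}\|\tilde s\|_1$. Minimizing over $\mu > 0$ collapses the first two terms into $\sqrt{\rho/N}\,\bigl(\sum_i(g_i\|\beta\|_{M^{-1}}-\tilde s_i)^2\bigr)^{1/2}$, and the final rescaling $\tilde s = \|\beta\|_{M^{-1}}\, s$ pulls $\|\beta\|_{M^{-1}}$ out as a common prefactor, reproducing $R(\beta)$ exactly as in~\eqref{eq:reformula_perturb}.

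The main obstacle is the bookkeeping of the middle step: selecting the $M^{-1}$-parallel/orthogonal decomposition so that Sherman--Morrison diagonalizes $A^{-1}$ cleanly along $\beta$, and then arguing simultaneously through the $\ell_1$-type penalty and the quadratic penalty that the orthogonal tail $\alpha_i^\perp$ must vanish at every optimizer. Once this rank-one reduction is in place, the remaining steps are routine quadratic calculus and a single AM--GM.
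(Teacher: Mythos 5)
Your proposal is correct, and the central computation checks out: with $A=\lambda M-\tfrac{\gamma}{2}\beta\beta^\top$ and $v_i=g_i\beta-\alpha_i$, Sherman--Morrison indeed diagonalizes the inner value as $\tfrac{1}{4}v_i^\top A^{-1}v_i=\tfrac{1}{4\lambda}\|\alpha_i^\perp\|_{M^{-1}}^2+\tfrac{1}{2(2\lambda-\gamma\|\beta\|_{M^{-1}}^2)}\bigl(g_i\|\beta\|_{M^{-1}}-\tilde s_i\bigr)^2$, so the collapse to the scalars $(\tilde s_i,\lambda)$ is valid. Your skeleton coincides with the paper's proof: both start from \eqref{eq:perturbed_general}, impose the finiteness constraint $2\lambda\ge\gamma\|\beta\|_{M^{-1}}^2$, evaluate the inner maximum in closed form via Sherman--Morrison, reduce each $\alpha_i$ to a multiple of $\beta$, and eliminate $\lambda$ by a one-dimensional minimization (your AM--GM step is exactly the paper's change of variables $\theta=2/(2\lambda-\gamma)$). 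You diverge in two places, both improvements in rigor. First, the paper deduces $\alpha_i\parallel\beta$ by inspecting the first-order optimality condition of the strongly convex $\alpha_i$-subproblem, assuming WLOG $\alpha_i\ne 0$ to dodge the non-smooth point of $\|\alpha_i\|_{M^{-1}}$; your $M^{-1}$-orthogonal decomposition plus the Pythagoras/monotonicity argument (setting $\alpha_i^\perp=0$ decreases the norm penalty and the quadratic term simultaneously) needs no such caveat and handles $\alpha_i=0$ uniformly. Second, the paper splits into two cases --- $\lambda^\star$ exactly at the boundary $\tfrac{\gamma}{2}\|\beta\|_{M^{-1}}^2$ (yielding the Jacobian-type term) versus strictly above --- and merges them at the end via Lemma~\ref{lm:opt_1}, whereas you work entirely on the open region $\mu>0$ and let the minimization over $\tilde s$ absorb the boundary case (at $\tilde s_i=g_i\|\beta\|_{M^{-1}}$ the AM--GM infimum reproduces the boundary value as $\mu\downarrow 0$). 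That shortcut is legitimate, but you should add the one-line justification: the dual objective is convex in $(\lambda,\alpha)$ (pointwise supremum of affine functions), so its infimum over the open region equals the infimum over the closure; this matters because at the boundary the objective can still be finite (whenever $v_i\perp M^{-1}\beta$) and could a priori be smaller. Lastly, your Tikhonov term $\tfrac{\gamma\rho}{2}\|\beta\|_{M^{-1}}^2$ matches the paper's own derivation; the absence of $\gamma$ in the displayed statement \eqref{eq:reformula_perturb} is an inconsistency of the paper, not of your argument.
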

% \viet{I think we should use $G_\beta$ because the dependence on $\beta$ is more important}

Obviously, compared with the exact martingale constraint investigated in Proposition \ref{prop:original}, the perturbed constraint we consider here involves an additional term $R(\beta)$.  Fortunately, 
armed with Lemma \ref{lm:opt_1}, we are able to shed light on its intuitive interpretation based on the quantitative relationship between modeling parameters $\epsilon$ and $\rho$. First, we discuss two extreme cases --- there are interesting hidden connections between \eqref{eq:relaxed} and other existing regularization techniques. 
\begin{remark}
\label{rmk}
Intuitively, if $\epsilon$ is relatively small, the optimal $s^\star$ is zero which means that all perturbed martingale constraints will be active. Precisely, if  $\epsilon^2\leq {\rho}$, the additional regularization term satisfies $R(\beta)= \epsilon \E_{\Pnom}[\|\nabla_X \ell(f_\beta(X))\|_{M^{-1}}]$, which is so-called 
Jacobian or input gradient regularizer in the literature when $M$ is an identity matrix. Recently, it has  received much intention  owing to its ability to improve adversarial robustness~\cite{zhang2020adversarial,finlay2019scaleable}.
Conversely, if $\rho$ is relatively small, the optimal $s^\star$ is equal to $G_\beta$, implying that none of perturbed martingale constraints is active. In fact, if  $\epsilon^2\ge N \rho$, we have $ R(\beta)=\sqrt{\rho\E_{\Pnom}[\|\nabla_X \ell(f_\beta(X))\|_{M^{-1}}^2]}$. Then, we can easily figure out that $\mc L_{\beta}(\Pnom, \rho, \epsilon)$ can be reduced to the conventional OT-based DRO model in Proposition \ref{prop:square_root}. 
\end{remark}
% \vspace{-0.5\baselineskip}
Then for the middle case, it is natural to infer that only part of constraints will be active. In  Lemma \ref{lm:opt_1}, we justify this conjecture rigorously. We refer the reader to Appendix \ref{sec:lemmas} for the details. As such, the proposed martingale DRO model takes the first step bridging the input gradient regularization and regularized square-root
regression problem in a unified framework. On the other hand, it also opens up an exciting brand new avenue of robustified regularizers. In the next section, we validate its effectiveness for the adversarial training task. 

% \vspace{-\baselineskip}
\section{Optimization Algorithms}\label{sec: opt_algo}
% \vspace{-0.5\baselineskip}
To take advantage of the proposed perturbed martingale DRO model, a natural question  here is whether we can address problem~\eqref{eq:relaxed} in a tractable manner. In this section, we answer the above question in the affirmative by developing two different computational paradigms for linear regression and deep neural network respectively. 
% \vspace{-\baselineskip}
\subsection{Subgradient Method for Linear Regression} 
% \vspace{-0.5\baselineskip}
 Unfortunately, the resulting formulation we conducted in Theorem \ref{thm:relaxed} (i.e., \eqref{eq:reformula_perturb}) for linear regression is potentially intractable for optimizing $(\beta,s)$ jointly. One obvious computational challenge here is that the overall problem is not necessarily convex in $(\beta, s)$, even if the original problem is convex over $\beta$. This is because the minima of convex functions is not convex. However, under a mild assumption, it is possible to give a reliable computational routine for solving the resulting problem over $\beta$ directly. 
 
 To start with, a key observation is $ \mc L_\beta(\Pnom, \rho, \epsilon)$ is convex over $\beta$. This is essentially from  the fact that the pointwise supremum of a class of convex functions is still convex. To check the details,  $ \mc L_\beta(\Pnom, \rho, \epsilon)$ is originally defined in \eqref{eq:relaxed} and  $\E_\pi[\ell(f_\beta(\bar{X}))]$ is  convex over $\beta$ for all $\pi$. Thus, a natural yet simple algorithm is subgradient method. The main difficulty is to obtain the correct subgradient oracle. Since the first two terms are smooth and strongly convex, we know that $R(\beta)$ is weakly convex and thus subdifferentially regular. All the subdifferential concepts are coincide.  As such, we may simply use the Clarke subdifferential~\cite{clarke1990optimization} in the sequel. Moreover, the sum rule is hold for computing the subgradient of $\mc L_\beta(\Pnom, \rho, \epsilon)$ due to the weakly convexity. We have 
 \[
 \partial_\beta \mc L_\beta(\Pnom, \rho, \epsilon) = \nabla_\beta \E_{\Pnom}[\ell(f_\beta(X))]+ \nabla_\beta \frac{\rho\norm{\beta}_{M^{-1}}^2}{2} + 
 \partial_\beta R(\beta).
 \]
 The remaining question is how to compute the Clarke subdifferential of $R(\beta)$.   
 When 
 $\epsilon^2\leq {\rho}$ and $\epsilon^2\ge N \rho$, $R(\beta)$ will enjoy the convex composite structure. Thus, we can get the correct subgradient by invoking the chain rule developed in \cite[Theorem 10.6]{rockafellar2009variational} directly. The more subtle and tricky case is the middle one --- $\rho <\epsilon^2<N\rho$. Without of loss generality, we assume that $M=I$ for simplicity. Based on \cite[Theorem 2.3.9]{clarke1990optimization}, we have 
 \[
 \partial_\beta R(\beta) \subseteq  \textnormal{Conv}\left\{ \partial_\beta \left(\frac{\epsilon }{N}\sum_{i=1}^N \|s_i^\star\beta\|_1  + \sqrt{\frac{\rho}{N}\sum_{i=1}^N\|(\nabla \ell(\beta^\top X_i) -s_i^\star)\beta\|_2^2}~\right), ~s^\star \in \mathcal{S}(\beta)\right\},
 \]
 where $\mathcal{S}(\beta)$ is the optimal solution set and $\textnormal{Conv}\{\cdot\}$ denotes the convex hull. If we assume the inclusion here is tight, then the vanilla subgradient method will converge to the optimal solution with the  rate $\mathcal{O}(1/\sqrt{K})$ \cite{boyd2003subgradient}. Empirically, we find out that the resulting subgradient method works well and  the violated case will never happen. 

\subsection{A New Principled Adversarial Training Procedure for Deep Learning}
% \vspace{-0.5\baselineskip}
\begin{figure}
  \centering
  \subfloat[{Housing}]{\includegraphics[width=0.31\textwidth]{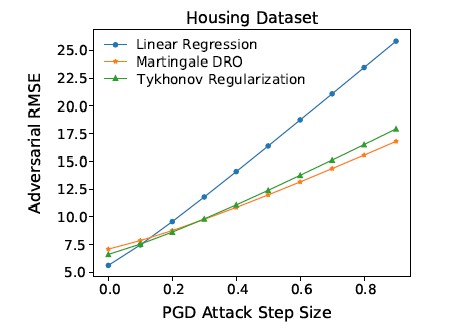}\label{subfig:housing}}\quad
  \subfloat[{Mg}]{\includegraphics[width=0.31\textwidth]{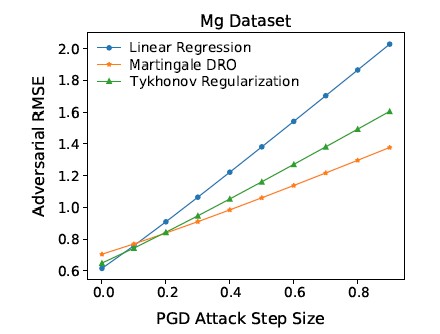}\label{subfig:mg}}
  \subfloat[{Mpg}]{\includegraphics[width=0.31\textwidth]{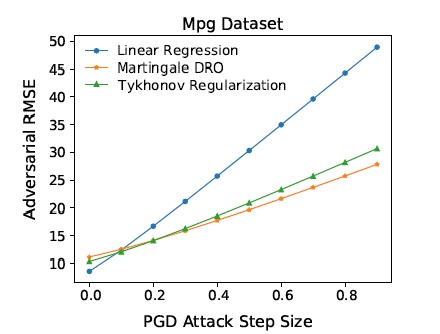}
    \label{subfig:mpg}}
\caption{Compare the proposed martingale DRO with two standard benchmarks (linear regression and ridge regression) on three real world datasets --- Housing, Mg and Mpg. The martingale DRO performs better than competing methods under large PGD step size.}
\label{fig:lr}
\vspace{-0.25in}
\end{figure}
In this subsection, we develop inexact stochastic gradient-type methods for \eqref{eq:perturbed_general} and thus are able to realized the benefits of the proposed DRO model in adversarial learning tasks. As the nonconvexity of $f_\beta(\cdot)$, the inner maximization problem over $\Delta_i$ will be no longer tractable. Therefore, we leverage the methodology proposed in \cite{sinha2018certifying} to gain the computational efficiency, that is, regarding the dual variable as a modeling parameter. To proceed, we make the same smoothness assumption in \cite[Assumption B]{sinha2018certifying} (i.e., see Assumption \ref{ass} in appendix for details). 
\begin{lemma}[Convex-concave minimax theorem without compactness]
\label{lm:minimax}
Suppose that $f:\R^n \rightarrow \R$ is convex and level bounded and $g:\R^m \rightarrow \R$ is strongly convex. Then we have 
\[
\min_{x\in \R^n} \max_{y \in \R^m} ~f(x)+x^\top Ay-g(y) = \max_{y \in \R^m} \min_{x\in \R^n}~f(x)+x^\top A y-g(y).
\]
\end{lemma}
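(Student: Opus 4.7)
The plan is to identify the two sides of the equality as the primal and dual values in a Fenchel-Rockafellar duality pair, for which strong duality is automatic because both $f$ and $g$ have full domain. To eliminate $y$ on the left-hand side I compute, for each $x$,
\[
\sup_{y \in \R^m} L(x,y) = f(x) + \sup_{y}\bigl\{(A^\top x)^\top y - g(y)\bigr\} = f(x) + g^*(A^\top x) =: \phi(x).
\]
Strong convexity of $g$ gives $g(y) \ge g(0) + \langle \nabla g(0), y\rangle + (\mu/2)\|y\|^2$, so $g$ has superlinear growth and its conjugate $g^*$ is finite, convex and continuous on $\R^m$. Moreover $g^*(u) \ge -g(0)$ uniformly in $u$, hence $\phi(x) \ge f(x) - g(0)$, and level boundedness of $f$ transfers to $\phi$. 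Thus $V_1 := \min_x \phi(x)$ is attained and equals the left-hand side of the claimed identity.

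Symmetrically, eliminating $x$ on the right-hand side yields
\[
\inf_{x\in\R^n} L(x,y) = -\sup_x\bigl\{(-Ay)^\top x - f(x)\bigr\} - g(y) = -f^*(-Ay) - g(y) =: \psi(y),
\]
which is concave in $y$ (possibly taking the value $-\infty$), so the right-hand side of the lemma equals $V_2 := \sup_y \psi(y)$.

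Finally I invoke strong Fenchel-Rockafellar duality: with $h := g^*$ and $B := A^\top$ one has
\[
\inf_x\{f(x) + h(Bx)\} = \sup_y\{-f^*(-B^\top y) - h^*(y)\},
\]
together with attainment of the supremum, whenever $\textnormal{ri}(\textnormal{dom}\,f)\cap B^{-1}(\textnormal{ri}(\textnormal{dom}\,h))\ne\emptyset$. Since $\textnormal{dom}\,f=\R^n$ and $\textnormal{dom}\,h=\textnormal{dom}\,g^* = \R^m$, this constraint qualification is vacuous; together with $h^* = (g^*)^* = g$ (closedness of the real-valued strongly convex $g$), this is precisely $V_1=V_2$ with both extrema attained. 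The only delicate point is the first step: strong convexity of $g$ (as opposed to mere strict convexity) is what upgrades $g^*$ to a real-valued function, and this in turn is what makes $\phi$ level bounded and the Fenchel-Rockafellar qualification free; the remainder is bookkeeping.
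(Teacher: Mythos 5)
Your proof is correct, but it reaches the conclusion through a different duality theorem than the paper. Both arguments begin identically, by partial conjugation: $\sup_y L(x,y) = f(x) + g^*(A^\top x) =: \phi(x)$ and $\inf_x L(x,y) = -f^*(-Ay) - g(y) =: \psi(y)$. The paper then cites the general best-case primal--dual relations of Rockafellar--Wets (their Corollary 11.40(d)), whose hypothesis is nonemptiness and boundedness of the two optimal solution sets; it verifies this by noting that $\phi$ inherits level-boundedness from $f$ (since $g^* \ge -g(0)$) and that $\psi$ is strongly concave (since $g$ is strongly convex), so its set of maximizers is compact. You instead recognize $\min_x \phi(x)$ as a Fenchel--Rockafellar primal with $h = g^*$ and $B = A^\top$, and close the gap through the relative-interior constraint qualification, which is vacuous precisely because strong convexity of $g$ makes $\textnormal{dom}\, g^* = \R^m$; the identity $(g^*)^* = g$ (closedness of the finite convex $g$) then identifies the Fenchel dual with $\sup_y \psi(y)$, and the same qualification yields dual attainment, i.e.\ the outer max on the right-hand side. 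Your route has the merit of disentangling the hypotheses: the equality of values needs only finiteness of $f$ and supercoercivity of $g$, so level-boundedness of $f$ is used solely to make the outer minimum attained, whereas the paper's route needs both boundedness conditions up front but is symmetric and immediate once the two reductions are computed. Two cosmetic points: $\nabla g(0)$ should be an arbitrary subgradient $v \in \partial g(0)$, since strong convexity does not presuppose differentiability; and to read the left-hand side literally as $\min_x \max_y$ you should note that the inner supremum is attained because $y \mapsto x^\top A y - g(y)$ is strongly concave and coercive.
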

\vspace{-0.1in}
Leveraging Lemma \ref{lm:minimax} and the smoothness assumption,  \eqref{eq:perturbed_general} leads to a simple and instructive form:
\begin{equation}
\label{eq:ad_form}
\min_{\beta } \frac{1}{N}\sum_{i=1}^N \max_{\|\Delta_i\|_M \leq \epsilon}  \left[ \ell(f_\beta(X_i+\Delta_i))-\lambda \|\Delta_i\|_M^2 \right].
\end{equation}
In contrast to the vanilla DRO model studied in \cite{sinha2018certifying}, \eqref{eq:ad_form} further constrains the perturbation into a Euclidean ball with the correlation information $M$. Moreover, we can observe that the magnitude of $\epsilon$ decides how many martingale constraints will be active, which also perfectly matches our theoretical results and interpretations established for linear regression, see Theorem \ref{thm:relaxed} and Remark \ref{rmk}. 

From a computational viewpoint, if $\lambda$ is large enough (see Lemma \ref{lm:diff} for details), the inner maximization problem is strongly concave and thus the outer minimization problem over $\beta$ will be smooth. This motivates Algorithm \ref{alg2}, an inexact stochastic gradient method for Problem \eqref{eq:ad_form}. The convergence guarantee is provided in \cite[Theorem 2]{sinha2018certifying}. It is worthwhile mentioning that the resulting new principled adversarial training is extremely easy to implement by only adding three lines of \textit{Pytorch} code based on \cite{sinha2018certifying}. We refer the interested readers to  Appendix \ref{sec: appedix_experiment} for details.

\begin{algorithm}[H]\label{alg2}
    \caption{Martingale Distributionally Robust Optimization with Adversarial Training }
    \SetKwInOut{Input}{Input}
    \Input{Sampling distribution $\Pnom$, stepsize sequence  $\{t_k\}_{k=0}^{K-1}$;}
    \For{$k=0,1,2,\cdots, K-1$}{
    \vspace{2mm}
     Sample $X^k \sim \Pnom$ and find an $\eta$-approximate maximizer $\hat{\Delta}^k$ satisfying 
    %  \begin{comment}
    %  \viet{$\eta$ does not show up below} 
    %  \end{comment}
        \begin{equation*}
        %\label{eq:ad_iter}
        \|\hat{\Delta}^k -{\Delta}^\star_k\|\leq \eta, \quad \text{where} \quad \Delta_k^\star = \mathop{\arg\max}_{\|\Delta\|_M \leq \epsilon} \left\{ \ell(f_{\beta^k}(X^k+\Delta)) - \lambda \|\Delta\|_M^2\right\}.
        \end{equation*}
        %satisfying $\|\hat{\Delta}^k -{\Delta}^\star_k\|\leq \eta$ where ${\Delta}^\star_k$ is the optimal solution of \eqref{eq:ad_iter} .\\
        %Assumption B
    Set $\beta^{k+1} \leftarrow \beta^k-t_k \nabla_\beta \ell(f_{\beta^k}(X^k + \hat{\Delta}^k )).$
    }
\end{algorithm}
\begin{comment}
\viet{Projection onto $\Omega$ is missing in Algo~\ref{alg2}?}
\Jiajin{Three line codes goes here! Assume $\Omega = \R^d$}'
\end{comment}

% \Jiajin{
% \begin{theorem}[Convergence of Nonconvex SGD]
% \label{thm:}
% Let Assumptions $A$ and $B$ hold with the $\ell_{2}$-norm and let $\Theta=\mathbb{R}^{d}$. Let $\Delta_{F} \geq F\left(\theta^{0}\right)-\inf _{\theta} F(\theta)$. Assume $\mathbb{E}\left[\left\|\nabla F(\theta)-\nabla_{\theta} \phi_{\gamma}(\theta, Z)\right\|_{2}^{2}\right] \leq \sigma^{2}$ and take constant stepsizes $\alpha=\sqrt{\frac{\Delta_{F}}{L_{\phi} T \sigma^{2}}}$ where $L_{\phi}:=L_{\theta \theta}+\frac{L_{\theta \mathrm{z}} L_{z} \theta}{\gamma-L_{\mathrm{zz}}}$. For $T \geq \frac{L_{\phi} \Delta_{F}}{\sigma^{2}}$, Algorithm 1 satisfies
% $$
% \frac{1}{T} \sum_{t=0}^{T-1} \mathbb{E}\left[\left\|\nabla F\left(\theta^{t}\right)\right\|_{2}^{2}\right]-\frac{4 L_{\theta \mathbf{z}}^{2}}{\gamma-L_{\mathrm{zz}}} \epsilon \leq 4 \sigma \sqrt{\frac{L_{\phi} \Delta_{F}}{T}}
% $$
% \end{theorem}} 
\vspace{-\baselineskip}
\section{Numerical Results} \label{sec:numerical}
\vspace{-0.5\baselineskip}
In this section, we validate the effectiveness of our methods (referred to as \textit{martingale DRO}) on both linear regression and deep neural networks under the adversarial setting. 
All simulations are implemented using Python 3.8 on: (1) a
computer running Windows 10 with a 2.80GHz, Intel(R) Core(TM) i7-1165G7 processor and 16 GB of RAM, and (2) Google Colab with NVIDIA Tesla P100 GPU and 16 GB of RAM. As for the adversarial setting, we consider three types of attack, the detailed definitions of which are collected in Appendix \ref{sec: appedix_experiment}. 

% high-performance computing server running Ubuntu 20.04 with an Intel(R) Xeon(R) Gold 6226R CPU and an NVIDIA GeForce RTX 3090 GPU. \Jiajin{Sirui, please replace the setup as yours!}
% \vspace{-\baselineskip}
\subsection{Linear Regression}
% \vspace{-0.5\baselineskip}

% \setlength{\intextsep}{0pt}

To start with, we demonstrate the effectiveness of the proposed martingale DRO model~\eqref{eq:reformula_perturb} with the quadratic loss function and linear feature mapping, i.e., $\ell(f_\beta(X)) = \frac{1}{2}(Y - b^\top Z)^2$ with $X^{\top} \Let (Y,Z^\top)$ and $\beta^\top \Let (1, -b^\top)$, where $Z$ is the feature vector and $Y$ is the target variable. In this experiment, we test our method on three LIBSVM regression real world datasets \footnote{\url{https://www.csie.ntu.edu.tw/~cjlin/libsvmtools/datasets/regression.html}}. More specifically, we randomly select 60\% of the data to train the models and the rest as our test data. To showcase the effectiveness of martingale DRO model under adversarial setting, we apply one-step  projected-gradient method (PGM) attack~\cite {madry2017towards} on test data and report the performance in terms of the the root-mean-square error (RMSE) on adversarial test data, where $\text{RMSE} \Let \sqrt{\frac{1}{N}\sum_{i} (\hat \beta^\top x_{\text{adv}}^{(i)} - y_{\text{adv}}^{(i)})^2}$ and $\hat \beta$ is the estimator of $\beta$.
All numerical results with different step sizes for PGM attack are collected in Figure \ref{fig:lr}. As we mentioned in Proposition \ref{prop:original}, since the exact martinagle DRO model (i.e., $\epsilon =0$) is equivalent to Tikhonov regularization, we choose the same hyperparameter $\rho = 0.08$ for ridge regression and martingale DRO mode for fair comparison. We can observe that the martingale DRO can outperform other two benchmarks over three real world datasets consistently when the  step size for the PGD attack is relatively large. This result also corroborates our theoretical intuition --- the additional regularization $R(\beta)$ can further improve the adversarial robustness.  
\subsection{Deep Neural Network for Adversarial Training}
% \vspace{-0.5\baselineskip}
% \vspace{-0.5\baselineskip}
% \vspace{-0.2in}

We generate the synthetic training data $\{(Y_i, Z_i)\}_{i \in \mc I}$ with a wide margin as follows: generate i.i.d.~$Z_j\sim N(0,I_2)$, where $Z\in\mbb R^2$, $I_2$ is the identity matrix in $\mbb R^2$; set $\mc I = \{j: \norm{Z_j}_2 \notin (\sqrt{2}/\eta, \eta\sqrt{2})\}$, where $\eta = 1.6$; let $Y_i = \sign(\norm{Z_i}_2 - \sqrt{2}),~\forall i\in \mc I$. We train a neural network with 3 hidden layers of size 4, 3 and 2 and ELU activations between layers. We compare our approach (cf. martingale DRO) with ERM and the conventional DRO approach developed in \cite{sinha2018certifying}. More details about the experiment setup are collected in Appendix \ref{sec: appedix_experiment}.

\begin{figure}[!h]
\centering
\vspace{-5mm}
\subfloat[Classification boundaries]{\includegraphics[width=0.4\textwidth]{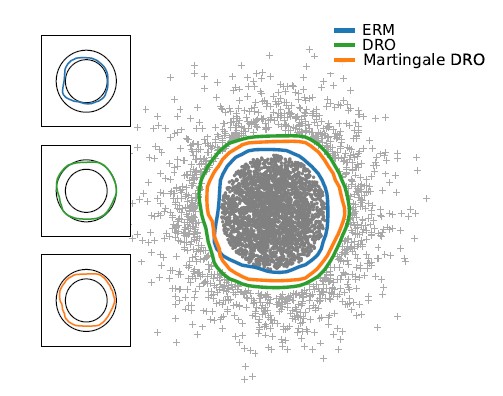}}
\quad
\subfloat[Data perturbation]{\includegraphics[width=0.4\textwidth]{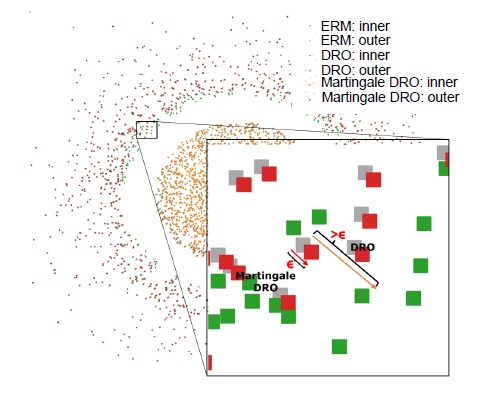}}
\caption{Illustration of the performance comparison between competing methods. ERM tends to overfit to the inner class while DRO becomes too conservative due to unconstrained perturbation. The martingale DRO leaves bigger margins from the sample points than the other methods.}
\label{fig: biclass}
\end{figure}

Figure~\ref{fig: biclass} show the experimental results on the synthetic dataset. Test data are shown in {\color{darkgray} darkgray} and {\color{gray} gray} with different shapes, which are generated by the above-mentioned procedure with a smaller margin ($\eta = 1.2$). Classification boundaries are shown in {\color{tabblue} blue}, {\color{tabgreen} green}, and {\color{taborange} orange} for ERM, DRO, and martingale DRO respectively, as well as with the true class boundaries of the test data. 
%where our approach is equipped with $\epsilon = 2.2$. 
Intuitively, the boundary generated by ERM is too close to the true inner boundary since the majority of points are of {\color{darkgray}darkgray} class, while the DRO approach pushes the classification boundary outwards. However, as illustrated in Figure~\ref{fig: biclass}(a), the DRO approach suffers from over-conservativeness and becomes entangled with the boundary of the outer {\color{gray} gray} class. In contrast, our martingale DRO boundary lands in between the two extremes and it leaves bigger margin. Figure~\ref{fig: biclass}(b) explicitly shows the qualitative difference between these two methods in terms of the perturbation to the data: the Martingale perturbation is constrained below $\epsilon$ while the DRO perturbation is unconstrained. Moreover, previously shown, decreasing non-zero $\epsilon$ pushes the perturbed martingale constraints towards the exact martingale constraints and forces the classification boundary increasingly inward. More results are in Appendix~\ref{sec: appedix_experiment}.

Then, we validate our method on the MNIST dataset \cite{lecun1998mnist}. For the classifier, we train a neural network equipped with $8\times8 $, $6\times 6$, $5\times 5$ convolutional filter layers and ELU activations followed by a fully connected layer and softmax output. To show the robustness of our method, we test the performance of four methods (ERM, DRO, Jacobian regularization \cite{hoffman2019robust} and martingale DRO) under the PGD and FGSM attacks (Definition~\ref{def: PGD attack}) with test error defined to be: $1- \textit{classification accuracy}$.

In Figures~\ref{subfig:PGD_MNIST} and \ref{subfig:FGSM_MNIST}, our martingale DRO model outperforms the other methods and still provides robustness under the $\infty$-norm FGSM attacks. In Figure~\ref{subfig:sensitivity_MNIST}, we show the performance of our model with different $\epsilon$. As expected, when $\epsilon$ is relatively small, the model is not flexible enough and shows large test error. Alternatively, as $\epsilon$ becomes large, our model will behave similarly to the original DRO model since the $\epsilon$-constraint is almost inactive in this case. 

Figure \ref{fig:MNIST_attack_visualize} visualizes the different levels of robustness for the four methods. For each test data point, we perturb the image using the DRO attacks (Definition \ref{def: DRO attack}) with decaying level of perturbation and respectively record the first perturbed images that each model correctly classifies. In Figure \ref{fig:MNIST_attack_visualize}, the original label is $6$ and all methods output the correct prediction, whereas in the adversarial example that the DRO model predicts $6$, the correct classification seems unreasonable to human eyes (see Appendix \ref{sec: appedix_experiment} for more examples). This observation shows an insight that the original DRO model is too conservative in predicting and our model puts more constraints on the perturbation when training thus providing a model that is more consistent to human eyes.

% \hspace{15pt}
\begin{table}[h]
    \centering
   \begin{tabular}{@{}lcccc@{}}
    \toprule
       \text{PGD Attack}	& \text{ERM}&	\text{DRO} &		\text{Jacobian Regularization}&  \text{Martingale DRO} \\
       \midrule
       $\epsilon = 0$&	84.16\% &	84.02\%	&81.73\%& 85.48\%	\\
       $\epsilon = 0.04$&	77.50\%&	82.87\%&		78.78\% &83.25\%\\
       $\epsilon = 0.08$&	70.20\%&	80.68\%&		73.85\% &80.86\%\\
\bottomrule
    \end{tabular}
    \vspace{0.1in}
    \caption{Top-1 accuracy results with different levels of perturbation on CIFAR-10.}
    \label{table:top1}
\end{table}
\vspace{-0.1in}
Experimental setup for CIFAR-10~\cite{krizhevsky2009learning}: For the classifier, we train a ResNet with the architecture in \cite{he2016deep}. We optimize using Adam with a batch size of 128 for all methods. The learning rate starts from 0.01 and shrinks by ${0.1}^{\frac{\text{epoch}}{\text{total epochs}}}$, and each model is trained for 100 epochs. The simulations are implemented using Python 3.8 on Google Colab with TPU v2 and 16GB RAM. Similarly, we test the performance of four methods (ERM, DRO, Jacobian regularization and martingale DRO) under the PGD attack with different levels of perturbation, the results shown in Table~\ref{table:top1} are consistent with those from the MNIST dataset.

\begin{figure}
% \vspace{-0.2in}
  \centering
  \subfloat[{PGD Attack}]{\includegraphics[width=0.31\textwidth]{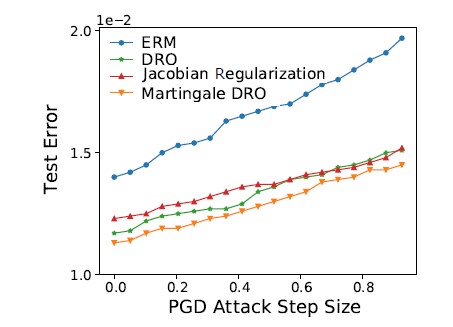}\label{subfig:PGD_MNIST}}\quad
  \subfloat[{FGSM Attack}]{\includegraphics[width=0.31\textwidth]{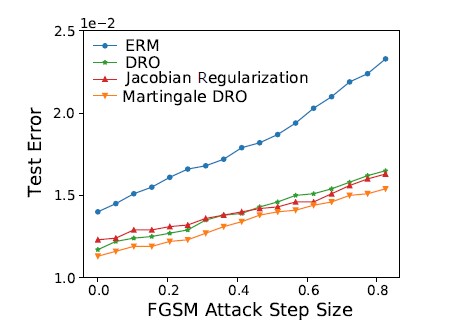}\label{subfig:FGSM_MNIST}}\quad
  \subfloat[{Sensitivity to $\epsilon$}]{\includegraphics[width=0.31\textwidth]{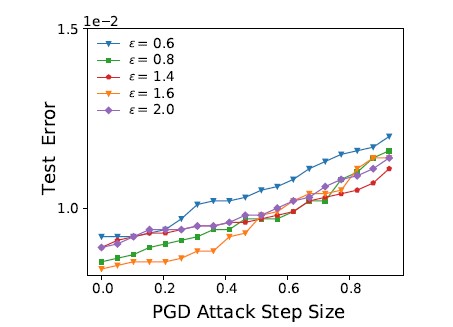}
    \label{subfig:sensitivity_MNIST}}
\caption{Compare the proposed martingale DRO with ERM and DRO on the MNIST datasets under PGD and FGSM attack; compare the proposed martingale DRO with different values of $\epsilon$.}
\label{fig:MNIST under attack_error}
% \vspace{-0.1in}
\end{figure}

\begin{figure}[h]
    \centering
    \subfloat[{Original}]{\includegraphics[width=0.13\textwidth]{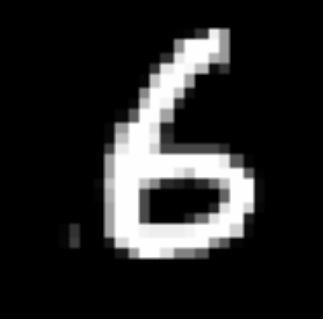}\label{subfig:original}}\quad\quad
    \subfloat[{ERM}]{\includegraphics[width=0.13\textwidth]{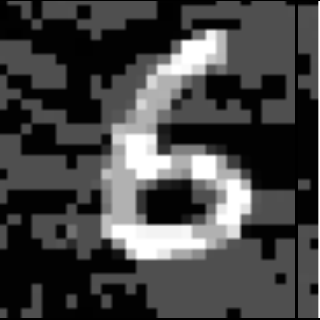}\label{subfig:ERM}}\quad\quad
    \subfloat[{\centering
        Jacobian  Regularization}]{\includegraphics[width=0.13\textwidth]{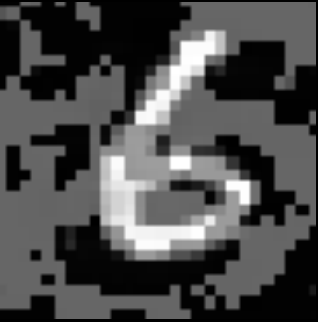}
    \label{subfig:Jacobian}}\quad\quad
    \subfloat[{DRO}]{\includegraphics[width=0.13\textwidth]{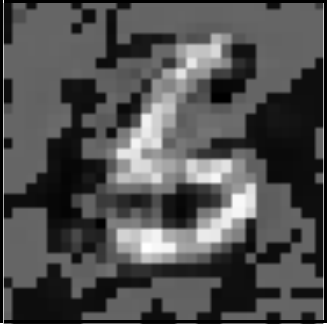}
    \label{subfig:Dro}}\quad\quad
    \subfloat[{\centering
        Martingale DRO}]{\includegraphics[width=0.13\textwidth]{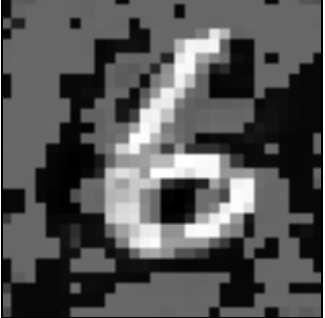}
    \label{subfig:MDro}}
    \caption{The largest DRO perturbation such that each model makes correct prediction.}
    \label{fig:MNIST_attack_visualize}
% \vspace{-0.2in}
\end{figure}

\section{Closing Remarks}
\label{sec:rmk}
In this paper, we find that the OT-based DRO model is equivalent to Tikhonov regularization when exact martingale constraints are imposed. Upon this interesting hidden connection, we introduce a new model called the perturbed martingale DRO model, which not only provides a unified viewpoint to several common robust methods but also leads to new regularization tools. Empirically, we validate the effectiveness of our model in addressing the conservativeness issue for the conventional DRO model. 
From the statistical perspective, how to optimally select the size of uncertainty regions and the perturbation size of the martingale constraint, is a natural problem to be further explored.

\paragraph{Acknowledgements}
Material in this paper is based upon work supported by the Air Force Office of Scientific Research under award number FA9550-20-1-0397. Additional support is gratefully acknowledged from NSF grants~1915967 and 2118199. Viet Anh Nguyen acknowledges the support from the CUHK's Improvement on Competitiveness in Hiring New Faculties Funding Scheme.

\newpage
\bibliography{ref}
\bibliographystyle{plain}

%%%%%%%%%%%%%%%%%%%%%%%%%%%%%%%%%%%%%%%%%%%%%%%%%%%%%%%%%%%%
\newpage
\section*{Checklist}

%%% BEGIN INSTRUCTIONS %%%

%%% END INSTRUCTIONS %%%

\begin{enumerate}

\item For all authors...
\begin{enumerate}
  \item Do the main claims made in the abstract and introduction accurately reflect the paper's contributions and scope?
    \answerYes{}
  \item Did you describe the limitations of your work?
    \answerYes{}
  \item Did you discuss any potential negative societal impacts of your work?
    \answerNA{}
  \item Have you read the ethics review guidelines and ensured that your paper conforms to them?
    \answerYes{}
\end{enumerate}

\item If you are including theoretical results...
\begin{enumerate}
  \item Did you state the full set of assumptions of all theoretical results?
    \answerYes{See section \ref{sec:trac}}
        \item Did you include complete proofs of all theoretical results?
    \answerYes{See Appendix}
\end{enumerate}

\item If you ran experiments...
\begin{enumerate}
  \item Did you include the code, data, and instructions needed to reproduce the main experimental results (either in the supplemental material or as a URL)?
    \answerYes{}
  \item Did you specify all the training details (e.g., data splits, hyperparameters, how they were chosen)?
    \answerYes{}
        \item Did you report error bars (e.g., with respect to the random seed after running experiments multiple times)?
    \answerNA{}
        \item Did you include the total amount of compute and the type of resources used (e.g., type of GPUs, internal cluster, or cloud provider)?
    \answerYes{}
\end{enumerate}

\item If you are using existing assets (e.g., code, data, models) or curating/releasing new assets...
\begin{enumerate}
  \item If your work uses existing assets, did you cite the creators?
    \answerYes{}
  \item Did you mention the license of the assets?
    \answerYes{}
  \item Did you include any new assets either in the supplemental material or as a URL?
    \answerNo{}
  \item Did you discuss whether and how consent was obtained from people whose data you're using/curating?
    \answerNA{}
  \item Did you discuss whether the data you are using/curating contains personally identifiable information or offensive content?
    \answerNA{}
\end{enumerate}

\item If you used crowdsourcing or conducted research with human subjects...
\begin{enumerate}
  \item Did you include the full text of instructions given to participants and screenshots, if applicable?
    \answerNA{}
  \item Did you describe any potential participant risks, with links to Institutional Review Board (IRB) approvals, if applicable?
    \answerNA{}
  \item Did you include the estimated hourly wage paid to participants and the total amount spent on participant compensation?
    \answerNA{}
\end{enumerate}

\end{enumerate}

%%%%%%%%%%%%%%%%%%%%%%%%%%%%%%%%%%%%%%%%%%%%%%%%%%%%%%%%%%%%
\newpage

\section*{Broader impact}
This work does not present any foreseeable societal consequence. While our contribution has a theoretical orientation, we believe that the structure of our method holds significant promise in the adversarial learning and  robust optimization as we mentioned in the body context. 

\appendix
\section{Organization of the Appendix}
We organize the appendix as follows:
\begin{itemize}
\item The proof details of  Theorem \ref{thm:strong_duality} (cf. \textbf{Strong Duality Result}) is given in Section \ref{sec:strong}.
\item We collect all proof details of tractable reformulation results in Section \ref{sec:proof_trac}, including Proposition \ref{prop:original}, Theorem~\ref{prop:general} and Theorem \ref{thm:relaxed}. 
\item All useful technical lemmas are summarized  in \ref{sec:lemmas}. 
\item Convergence analysis of optimization algorithms are provided in Section \ref{sec:appendix_opt}.
\item Supplementary materials of numerical results are included in Section \ref{sec: appedix_experiment}. 
\end{itemize}
\section{Strong Duality Result}
\label{sec:strong}
To obtain the tractable reformulation result, we start to prove the strong duality theorem for a general class of martingale constraints-based Wasserstein DRO optimization problems (i.e., in the main context, we just provide the simplified version for simplicity):
\begin{equation}
\label{eq:martinagle2}
    \begin{array}{cll}
        \mathop{\sup}\limits_{\QQ,\pi} & \int_{\mathcal{X}} f(\bar{X}) \dd \QQ \\
        \st & \QQ \in \mathcal{P}(\mathcal{X}),~ \pi \in  \mathcal{P}(\mathcal{X}\times \mbb{X} ) \\ 
        & P_1 \pi = \QQ, P_2 \pi = \Pnom \\
        & \int_{\mathcal{X}\times \mbb{X}} c(\bar{X},X) \dd \pi\leq \rho \\
        % & \int_{\mathcal{X}\times S} (\bar{X}-X-\eta(S))d\pi(\bar{X},X) = 0, \forall S \subseteq \mathcal{X}. 
        &  \E_{\pi}[\bar{X}|X] = \Tilde{X} \quad \Pnom\text{-a.s.} 
        % & \pi(\dd\bar{x},\dd x) = \frac{1}{N}\sum_{i=1}^N \delta_{X_i}(\dd X) \QQ^i(\dd \bar{X}) 
        \tag{Primal}
    \end{array}
\end{equation}
Here, 
\begin{itemize}
    % \item $\mathcal{X}$ is a closed set in $\R^d$. 
    \item $f:\mathcal{X} \rightarrow \R$ is assumed to be upper semi-continuous and $\mu$-integrable i.e., $f \in L^1(\mu)$. 
    \item $\mathcal{P}(\mathcal{X})$ denotes the set of all Borel probability measures supported on $\mathcal{X}$. 
    \item  The cost function $c :  \mathcal{X}\times \mc{X} \rightarrow [0,\infty]$ is a lower semicontinuous function satisfying $c(X,X) =0$ for every $X\in \mathcal{X}$. 
    \item $P_1 \pi$ and $P_2\pi$ refer to the first and second marginal probability measure of $\pi$, that is, $(P_1 \pi)(S) = \pi(S\times \mathcal{X})$ and $(P_2 \pi)(S) = \pi(\mathcal{X} \times S)$ for any Borel subset $S$ of $\mathcal{X}$. 
    % \item $\eta: 2^{\mathcal{X}} \rightarrow \R^d$. 
    \item For simplicity, let the reference measure be the empirical distribution $\Pnom = \frac{1}{N}\sum_{i=1}^N \delta_{X_i}$ and $\mathbb{X}  \Let \{X_1,X_2,\cdots, X_N\} \subset \mathcal{X}$. 
    \item $\Tilde{X}$ is the perturbed discrete distribution based on the empirical distribution $\Pnom$ supported on $\{X_1+\eta_1, \cdots, X_N + \eta_N\}$, i.e., $\Tilde{\mathbb{P}} = \frac{1}{N}\sum_{i=1}^N \delta_{X_i+\eta_i}$. 
\end{itemize}
The Lagrangian dual problem is derived as 
\begin{equation}
    \label{eq:dual2}
    \min_{\substack{\lambda \in \R_+ \\ \alpha_i \in \R^d~\forall i}} \lambda\rho+\sum_{i=1}^N \alpha_i^\top(X_i+\eta_i)+\frac{1}{N}\sum_{i=1}^N\max_{\bar{X}} \left[f(\bar{X})-\alpha_i^\top\bar{X} -\lambda c(\bar{X},X_i)\right]
    \tag{Dual}. 
\end{equation}
\begin{theorem}[Restate Theorem \ref{thm:strong_duality} in a more general fashion] 
\label{thm:strong_duality_new}
Suppose that (i) the reference measure $\Pnom$ is the empirical distribution, i.e., $\Pnom=\frac{1}{N}\sum_{i \in [N]} \delta_{X_i}$, (ii) $\Tilde{X}$ follows from the perturbed empirical distribution, i.e., $\Tilde{\mathbb{P}} = \frac{1}{N}\sum_{i \in [N]} \delta_{X_i+\eta_i}$ satisfying  $X_i+\eta_i \in  \textnormal{int}(\textnormal{cone}(\mathcal{X}))~ \forall i \in [N]$, and (iii) the ambiguity radius satisfies $\rho>0$. Then 
strong duality holds, i.e., $\textnormal{Val}(\ref{eq:martinagle2}) = \textnormal{Val}(\ref{eq:dual2})$. 
\end{theorem}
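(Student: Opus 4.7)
The plan is to recast \eqref{eq:martinagle2} as a semi-infinite conic linear program over a space of signed measures and apply the abstract conic duality of \cite[Proposition 3.4]{shapiro2001duality} to obtain strong duality. Since the second marginal is the empirical measure, I would first disintegrate any feasible coupling as $\pi = \frac{1}{N}\sum_{i=1}^N \pi_i \otimes \delta_{X_i}$, reducing the infinite-dimensional variable to an $N$-tuple of probability measures $\pi_i \in \mathcal{P}(\mathcal{X})$. Under this disintegration, \eqref{eq:martinagle2} becomes the problem of maximizing $\frac{1}{N}\sum_i \int f\,d\pi_i$ subject to the scalar transport budget $\frac{1}{N}\sum_i \int c(\bar X, X_i)\,d\pi_i \leq \rho$, the $N$ vector martingale equalities $\int \bar X\,d\pi_i = X_i + \eta_i$, unit-mass normalizations $\int d\pi_i = 1$, and the nonnegativity $\pi_i \geq 0$.

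Next, I would identify this reformulation with the standard primal of a conic linear program in Shapiro's framework, with decision variable in the nonnegative cone of signed measures, a continuous linear constraint operator into $\R \times (\R^d)^N \times \R^N$, and a linear objective. Forming the Lagrangian with multipliers $\lambda \geq 0$, $\alpha_i \in \R^d$, and $\gamma_i \in \R$ for the three blocks of constraints and maximizing over $\pi_i \geq 0$ forces $\gamma_i = \sup_{\bar X}[f(\bar X) - \alpha_i^\top \bar X - \lambda c(\bar X, X_i)]$, which collapses the formal Lagrangian dual into exactly \eqref{eq:dual2} and yields weak duality for free.

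The substantive content---and the expected main obstacle---is verifying the generalized Slater/Robinson constraint qualification that \cite[Proposition 3.4]{shapiro2001duality} requires to upgrade weak to strong duality. Concretely, one must exhibit a strictly feasible coupling, i.e., an $N$-tuple $(\pi_i^{\circ})$ satisfying the equalities with strict slack in the transport inequality, and moreover verify that the image of the constraint operator has nonempty interior in the target space at the nominal right-hand side. The hypothesis $\rho > 0$ directly supplies slack in the transport inequality, while the critical assumption $X_i + \eta_i \in \textnormal{int}(\textnormal{cone}(\mathcal{X}))$ is precisely what guarantees that the vector equalities are jointly realizable by a probability measure with room to perturb: it ensures $X_i + \eta_i$ admits a representation as a strict convex combination of points interior to $\mathcal{X}$, so $\pi_i^{\circ}$ can be chosen with spread support rather than being forced to concentrate on the boundary. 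This is exactly the cone-interior step that is left implicit in \cite[Theorem 4.2]{mohajerin2018data}, since there the plain OT problem does not carry the vector equality that tightly couples support and mean. Once Slater is in place, combined with finiteness of the primal value from the $L^1$-integrability of $f$ and upper semi-continuity ensuring the inner suprema in the dual are well-defined, \cite[Proposition 3.4]{shapiro2001duality} closes the duality gap and delivers $\textnormal{Val}(\ref{eq:martinagle2}) = \textnormal{Val}(\ref{eq:dual2})$.
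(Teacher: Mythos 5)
Your overall route coincides with the paper's: both recast \eqref{eq:martinagle2} as a conic linear program over the cone of (unnormalized) nonnegative measures by exploiting the discreteness of $\Pnom$, both invoke \cite[Proposition 3.4]{shapiro2001duality}, and both recover \eqref{eq:dual2} from the Lagrangian exactly as you describe. The gap is in the one step you yourself flag as the substantive content: the verification of the generalized Slater condition. You justify it by claiming that $X_i+\eta_i \in \textnormal{int}(\textnormal{cone}(\mathcal{X}))$ ensures that $X_i+\eta_i$ ``admits a representation as a strict convex combination of points interior to $\mathcal{X}$.'' That implication is false in general: if $\mathcal{X}$ is the unit sphere in $\R^d$, then $\textnormal{int}(\mathcal{X})=\emptyset$ and every convex combination of points of $\mathcal{X}$ stays in the unit ball, yet $\textnormal{cone}(\mathcal{X})=\R^d$, so every point --- including points that are not convex combinations of points of $\mathcal{X}$ at all --- satisfies the hypothesis. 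Convexity is the wrong mechanism here: the mean of any \emph{probability} measure on $\mathcal{X}$ is confined to the closed convex hull of $\mathcal{X}$, whose interior can be empty, so no argument routed through ``spread-support'' probability measures can certify the interiority that Slater's condition demands.

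What actually makes the hypothesis work --- and what the paper isolates as Lemma \ref{lm:range} --- is that in the conic reformulation the normalization $\int \dd\pi_i = 1$ is kept as a separate linear constraint, so the constraint operator $\mathcal{A}$ is evaluated on the whole cone $M_+(\mathcal{X}\times\mathbb{X})$ of unnormalized measures. The martingale block of $\mathcal{A}$ is then the mean map $F(\mu)=\int \bar{X} \,\dd\mu$ over $M_+$, whose range contains every \emph{conic} combination $\sum_k w_k x_k$ with $w_k\ge 0$, $x_k\in\mathcal{X}$, realized by $\mu=\sum_k w_k\delta_{x_k}$; hence $\textnormal{cone}(\mathcal{X})\subseteq\textnormal{Range}(F)$, and monotonicity of interiors gives $X_i+\eta_i\in\textnormal{int}(\textnormal{cone}(\mathcal{X}))\subseteq\textnormal{int}(\textnormal{Range}(F))$. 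Combined with $1/N>0$ and $\rho>0$, this places the right-hand side $b$ in $\textnormal{int}[\mathcal{A}(M_+(\mathcal{X}\times\mathbb{X}))-K]$, which is precisely Shapiro's condition (3.12). Note also that this condition concerns the image set, not the existence of a ``strictly feasible coupling'': with vector equality constraints there is no strictly feasible point in the classical sense, so the first half of your framing would dead-end even before the convexity issue. Replacing your convex-combination argument by this cone-range lemma repairs the proof and brings it into agreement with the paper's.
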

\begin{proof}[Proof of Theorem~\ref{thm:strong_duality_new}]
 Since $\QQ =P_1\pi$ a change of variables allows us to rewrite the objective function as
 \[
 \int_{\mathcal{X}\times \mbb{X}}  f(\bar{X}) \dd \pi. 
 \]
Then, as the reference measure $\Pnom = \frac{1}{N}\sum_{i=1}^N \delta_{X_i}$, we can recast the marginal constraint $P_2\pi = \Pnom$ as 
 \[
 \int_{\mathcal{X}\times\mbb{X}}\mathbb{I}_{\mathcal{X}\times \{X_i\}} (\bar{X},X) \dd \pi = \frac{1}{N} \quad \forall i\in[N],
 \]
 where $\mathbb{I}_{\mc S}$ is the indicator function of the set $\mc S$. Similarly, we can also reformulate the martingale constraint via further exploiting the discrete structure of the reference measure $\Pnom$: 
 \begin{align*}
     \int_{\mathcal{X}\times \mathbb{X}} \bar{X}\mathbb{I}_{X_i}(X) \dd \pi & =  \frac{1}{N}\sum_{i=1}^N \int_{\mathcal{X}} \bar{X}\mathbb{I}_{X_i}(X_i) \QQ^i(\dd \bar{X}) \\
     & = \frac{1}{N}\int_{\mathcal{X}} \QQ^i(\dd \bar{X}) = \frac{1}{N}(X_i + \eta_i). 
 \end{align*}
 Thus, we have
 \[
\int_{\mathcal{X}\times \mathbb{X}}\mathbb{I}_{X_i}(X)\cdot\bar{X} \dd \pi = \frac{1}{N}(X_i + \eta_i) \quad \forall i \in [N],
 \]
where $X_i+ \eta_i \in \mathcal{X}$. If we make the normalization of $\pi$ explicit, we obtain the following equivalent reformulation of Problem \eqref{eq:martinagle}: 
\begin{equation} \label{eq:standard}
    \begin{array}{cll}
        \mathop{\sup}\limits_{\pi \in M_+(\mathcal{X}\times \mbb{X})} &  \int_{\mathcal{X}\times \mbb{X}}  f(\bar{X}) \dd \pi. \\
        % \st & \int_{\mathcal{X}\times \mbb{X}} \dd \pi = 1\\ 
       \st &  \int_{\mathcal{X}\times\mbb{X}}\mathbb{I}_{\mathcal{X}\times \{X_i\}} (\bar{X},X) \dd \pi = \frac{1}{N} & \forall i\in[N] \\
        & \int_{\mathcal{X}\times \mbb{X}}\mathbb{I}_{X_i}(X)\cdot\bar{X} \dd \pi =  \frac{1}{N}(X_i+\eta_i) & \forall i \in [N]\\
        & \int_{\mathcal{X}\times \mbb{X}} c(\bar{X},X) \dd \pi \leq \rho.
    \end{array}
\end{equation}
 Here, $M_+(\mathcal{X}\times \mbb{X})$ is the set of all non-negative Borel measures supported on $\mathcal{X}\times \mbb{X}$ and the first integral constraint ensures that $\pi$ is a probability measure. As $\mc M_+(\mathcal{X}\times \mbb{X})$ is a convex cone and all of constraints regarding $\pi$ are linear, problem~\eqref{eq:standard} can be fitted into the standard primal problem in \cite[(3.2)]{shapiro2001duality}. That is, 
\begin{equation}
\begin{array}{cl}
 \mathop{\min}\limits_{\pi \in \mc M_+(\mathcal{X}\times \mbb{X})} & \langle f, \pi \rangle \\
 \st & \mathcal{A}(\pi)-b \in K,
\end{array}
\end{equation}
where 
 \[
 K = \{0\}^{N+Nd}\times\R_{\leq 0}, \quad b=\left(\frac{1}{N}\mathbf{e}_N,X_1+\eta_1,\cdots,X_N+\eta_N,\rho\right),
 \]
 and $\mathcal{A}$ is the linear mapping defined through the left hand side of the constraints in~\eqref{eq:standard}: \[
    \mc A : \pi \mapsto
    \begin{bmatrix}
    %  \int_{\mathcal{X}\times \mbb{X}} \dd \pi \\ 
    \int_{\mathcal{X}\times\mbb{X}}\mathbb{I}_{\mathcal{X}\times \{X_1\}} (\bar{X},X) \dd \pi \\
     \vdots \\
     \int_{\mathcal{X}\times\mbb{X}}\mathbb{I}_{\mathcal{X}\times \{X_N\}} (\bar{X},X) \dd \pi \\
        \int_{\mathcal{X}\times \mbb{X}}\mathbb{I}_{X_1}(X)\cdot\bar{X} \dd \pi \\
        \vdots \\
        \int_{\mathcal{X}\times \mbb{X}}\mathbb{I}_{X_N}(X)\cdot\bar{X} \dd \pi \\
        \int_{\mathcal{X}\times \mbb{X}} c(\bar{X},X) \dd \pi
    \end{bmatrix}
 \]

Next, we aim at invoking Proposition 3.4 in \cite{shapiro2001duality} to prove the strong duality.  A sufficient condition is the generalized Slater condition, see (3.12) in \cite{shapiro2001duality}. That is, we have to check
\begin{equation}
    b \in \textnormal{int}[\mathcal{A}(M_+(\mathcal{X}\times \mathbb{X}))-K], 
\end{equation}
where $\textnormal{int}(\cdot)$ is the interior of a set. As such, 
\[
\mathcal{A}(M_+(\mathcal{X}\times \mathbb{X})) =  [0, +\infty]^{N}\times\textnormal{Range}(F)^N\times[0,\infty],
\]
where $F:M_+(\mathcal{X}\times \mathbb{X}) \rightarrow \R^d$ satisfying $F(\pi) = \int_{\mathcal{X}\times \mathbb{X}} \mathbb{I}_{X_i}(X)\cdot\bar{X} \dd \pi$. 
Then, 
\[
\mathcal{A}(M_+(\mathcal{X}\times \mathbb{X}))-K = [0, +\infty]^{N}\times\textnormal{Range}(F)^N\times[0,\infty]. 
\]
To check the Slater condition, we validate each constraint separately. 
\begin{itemize}
    % \item $1\in \textnormal{int}([0, +\infty])$;
    \item $\frac{1}{N}\in \textnormal{int}([0, +\infty])$, for all $i \in [N]$;
    \item As $X_i+\eta_i \in  \textnormal{int}(\textnormal{cone}(\mathcal{X})), \forall i \in [N]$ and Lemma \ref{lm:range} holds (i.e., $\textnormal{cone}(\mathcal{X})\subseteq \textnormal{Range}(F)$, then $\frac{1}{N}(X_i+\eta_i) \in  \textnormal{int}(\textnormal{Range}(F)), \forall i\in[N]$.
    \item Due to $\rho>0$, we have $\rho \in\textnormal{int}([0, +\infty]) $. 
\end{itemize}
Then, we obtained the desirable result. At last,  we derive the dual problem by the standard Lagrangian method following \cite{shapiro2001duality}, 
\begin{equation*}
\begin{aligned}
\mathcal{L}(\pi;\lambda,s,\alpha)  = &\lambda\rho + \frac{1}{N}\sum_{i=1}^N s_i + \frac{1}{N}\sum_{i=1}^N \alpha_i^\top(X_i+\eta_i)\\
& +\int_{\mathcal{X}\times \mbb{X}} \left[ f(\bar{X})-\sum_{i=1}^N \mathbb{I}_{X_i}(X)\cdot \alpha_i^\top\bar{X} -\lambda c(\bar{X},X)-\sum_{i=1}^N s_i\mathbb{I}_{\mathcal{X}\times \{X_i\}} (\bar{X},X)\right] \dd \pi. 
\end{aligned}
\end{equation*}
Due to the strong duality result, we have 
\[
\sup_{\pi\in M_+(\mathcal{X}\times \mbb{X})} ~\min_{\lambda\ge0,s,\alpha}\mathcal{L}(\pi;\lambda,s,\alpha) = \min_{\lambda\ge0,s,\alpha}~\sup_{\pi\in M_+(\mathcal{X}\times \mathbb{X})}\mathcal{L}(\pi;\lambda,s,\alpha).
\]
Moreover, since $\mbb{X}=\{X_1,X_2,\cdots,X_N\}$, the nonnegative measure $\pi \in M_+(\mc X \times \mbb{X})$ can be decomposed as $\pi(\bar{X},X) = \sum_{i=1}^N w_i \mathbb{I}_{X_i}(X) \QQ^i(\bar{X})$ where $w_i \ge 0, \forall i\in[N]$. 
Then, we have 
\begin{equation}
\begin{aligned}
    &\min_{\lambda\ge0,s,\alpha}\sup_{\pi\in M_+(\mathcal{X}\times \mathbb{X})}\,\mathcal{L}(\pi;\lambda,s,\alpha) \\
    =&\min_{\lambda\ge0,s,\alpha}~\sup_{w_i\ge 0} \, \lambda\rho + \frac{1}{N}\sum_{i=1}^N s_i +\frac{1}{N} \sum_{i=1}^N \alpha_i^\top(X_i+\eta_i) \\&+ \sum_{i=1}^N w_i \max_{\bar{X}} \left[f(\bar{X})-\alpha_i^\top\bar{X} -\lambda c(\bar{X},X_i)- s_i\right] \\
    =& \min_{\lambda\ge0,s,\alpha}~\sup_{w_i\ge 0} \, \lambda\rho+ \sum_{i=1}^N\left(\frac{1}{N}-w_i\right)s_i+\frac{1}{N}\sum_{i=1}^N \alpha_i^\top(X_i+\eta_i)\\&+ \sum_{i=1}^N w_i \max_{\bar{X}} \left[f(\bar{X})-\alpha_i^\top\bar{X} -\lambda c(\bar{X},X_i)\right]\\
    = & \min_{\lambda\ge0,\alpha} \lambda\rho+\frac{1}{N}\sum_{i=1}^N \alpha_i^\top(X_i+\eta_i)+\frac{1}{N}\sum_{i=1}^N\max_{\bar{X}} \left[f(\bar{X})-\alpha_i^\top\bar{X} -\lambda c(\bar{X},X_i)\right].
    \end{aligned}
\end{equation}
We complete the proof.
\end{proof}

\section{Proof Details  of Tractable Reformulation Results}
\label{sec:proof_trac}
\subsection{Proof of Proposition \ref{prop:original} }
\label{sec:append_marti}
\begin{proof}[Proof of Proposition~\ref{prop:original}]
Problem~\eqref{eq:marti_mod} can be recast into 
\begin{equation}
\label{eq:quad_martingale}
    \begin{array}{cl}
        \sup & \int_{\mathcal{X}} \ell(\beta^\top\bar{X})\dd \QQ \\
        \st & \QQ \in \mathcal{P}(\mathcal{X}), \pi \in  \mathcal{P}(\mathcal{X}\times \mathcal{X}) \\ 
        & P_1 \pi = \QQ, P_2\pi = \Pnom \\
        & \int_{\mathcal{X}\times \mbb{X}} c(X,\bar{X})\dd \pi \leq \rho \\
        & \int_{\mathcal{X}\times \mbb{X}}\mathbb{I}_{X_i}(X)\cdot\bar{X} \dd\pi = \frac{1}{N} X_i \quad \forall i \in [N].
    \end{array}
\end{equation}
Because $\Pnom$ is the empirical measure and because any feasible measure $\pi$ satisfy the constraint $P_2 \pi = \Pnom$, the integral in the last two constraints of~\eqref{eq:quad_martingale} is restricted to $\mc X \times \mbb{X}$ (instead of $\mc X \times \mc X$) without any loss of optimality.

The trivial case $\rho = 0$ is easy to verify. To begin with, we focus on the case where $\rho>0$. 
% \viet{verify the conditions of Theorem here} 
% check the conditions of the strong duality theorem developed in this paper
Here, we want to invoke the strong result, i.e.,  Theorem~\ref{thm:strong_duality}. Before getting into details, we check the conditions at first. 
$\ell(\cdot)$ is quadratic and thus upper semi-continuous; $\mathcal{X} = \R^d$ can help us to get rid of the  mild regularity condition, that is, $X_i \in \textnormal{int}(\R^d)$ automatically holds. 
Then, we get \[
L_\beta(\Pnom, \rho) = \min_{\lambda \ge 0, \alpha} \lambda \rho + \frac{1}{N} \sum_{i=1}^N \alpha_i^\top X_i+\frac{1}{N}\sum_{i=1}^N\max_{\bar{X}} \left[\ell(\beta^\top \bar{X}) -\alpha_i^\top \bar{X} -\lambda c(\bar{X},X_i)\right].
\]
By a change of the variables, i.e., $\Delta_i = \bar{X}-X_i~\forall i \in [N]$, we have 
\begin{align*}
 & L_\beta(\Pnom,\rho)\\
 = & \min_{\lambda\ge0,\alpha} \lambda\rho+\frac{1}{N}\sum_{i=1}^N\max_{\Delta_i} \left[\ell(\beta^\top(X_i+\Delta_i))-\alpha_i^\top\Delta_i -\lambda \|\Delta_i\|^2_M\right]\\
= &\min_{\lambda\ge0,\alpha} \lambda\rho+\frac{1}{N}\sum_{i=1}^N\max_{\Delta_i} \left[\ell(\beta^\top X_i)+\nabla\ell(\beta^\top X_i
)\beta^\top\Delta_i+\frac{\gamma}{2}\|\beta^\top\Delta_i\|^2-\alpha_i^\top\Delta_i -\lambda \|\Delta_i\|^2_M\right] \\
= & \frac{1}{N}\sum_{i=1}^N\ell(\beta^\top X_i)+ \min_{\lambda\ge0,\alpha} \lambda\rho+\frac{1}{N}\sum_{i=1}^N\max_{\Delta_i} \left[(\nabla\ell(\beta^\top X_i
)\beta-\alpha_i)^\top\Delta_i+\frac{\gamma}{2}\|\beta^\top\Delta_i\|^2-\lambda \|\Delta_i\|_M^2\right]. 
\end{align*}
% \viet{above should be $\ge$?}
% where the last inequality essentially follows from the general Cauchy-Schwarz inequality for the normed space \viet{can you explain why $\| \beta^\top \Delta_i \|^2 = \| \beta \|_{M^{-1}}^2 \| \Delta_i \|_M^2$?}. \Jiajin{$\| \beta^\top \Delta_i \|^2 \leq  \| \beta \|_{M^{-1}}^2 \| \Delta_i \|_M^2$. Due to the structure of the optimal solution, the inequality can be tight as an equality. Will add the details later! }

Thus, the crux is the inner maximization problem. To proceed, we exhaust all possible cases. When $\lambda<\norm{\beta}^2_{M^{-1}} \gamma/2$, it is easy to check that the inner maximization problem will go to $+\infty$ due to the general Cauchy-Schwarz inequality for the normed space $\| \beta^\top \Delta_i \|^2 \leq  \| \beta \|_{M^{-1}}^2 \| \Delta_i \|_M^2$.  When $\lambda = \norm{\beta}^2_{M^{-1}} \gamma/2$ and $\alpha_i \neq \nabla\ell(\beta^\top X_i
)\beta$, the inner maximization problem will also  go to $+\infty$. As such, we have 
\begin{align*}
L_\beta(\Pnom, \rho) & \leq  \frac{1}{N}\sum_{i=1}^N\ell(\beta^\top X_i)+ \min_{\lambda\ge0,\alpha} \lambda\rho+\frac{1}{N}\sum_{i=1}^N\max_{\Delta_i} \left[\frac{\gamma}{2}\|\beta^\top\Delta_i\|_M^2-\lambda \|\Delta_i\|_M^2\right]\\
& = \frac{1}{N}\sum_{i=1}^N\ell(\beta^\top X_i)+ \min_{\lambda\ge0,\alpha} \lambda\rho+\frac{1}{N}\sum_{i=1}^N\max_{\Delta_i} \left[\frac{\gamma}{2}\|\beta\|_{M^{-1}}^2 \|\Delta_i\|_M^2-\lambda \|\Delta_i\|_M^2\right] \\
& = \frac{1}{N}\sum_{i=1}^N  \ell(\beta^\top X_i)+ \frac{\gamma \rho}{2} \|\beta\|_{M^{-1}}^2,  
\end{align*}
if $\lambda = \norm{\beta}_{M^{-1}}^2 \gamma/2$ and $\alpha_i = \nabla\ell(\beta^\top X_i
)\beta$. At last, we focus on the left case and further prove the above inequality is the equality. If $\lambda > \norm{\beta}_{M^{-1}}^2 \gamma/2$, we have 
\[
\lambda\rho+\frac{1}{N}\sum_{i=1}^N\max_{\Delta_i} \left[(\nabla\ell(\beta^\top X_i
)\beta-\alpha_i)^\top\Delta_i+\frac{\gamma}{2}\|\beta^\top\Delta_i\|^2-\lambda \|\Delta_i\|_M^2\right]>\norm{\beta}_{M^{-1}}^2 \gamma/2 + 0. 
\]
The desirable result is obtained, that is, 
\[
L_\beta(\Pnom, \rho) = \frac{1}{N}\sum_{i=1}^N  \ell(\beta^\top X_i)+ \frac{\gamma \rho}{2} \|\beta\|_{M^{-1}}^2 = \E_{\Pnom}[\ell(\beta^\top X)]+\frac{\gamma \rho}{2} \|\beta\|_{M^{-1}}^2.
\]
This completes the proof.
\end{proof}

\subsection{Proof of  Theorem~\ref{prop:general} and Theorem \ref{thm:relaxed}}
\label{sec:proof_thm}

\begin{proof}[Proof of Theorem~\ref{prop:general}]

To start with, we recast problem~\eqref{eq:relaxed} into a two-layer optimization problem:
\begin{equation} \label{eq:refor_relaxed}
\begin{array}{ccl}
\mc L_\beta(\Pnom, \rho, \epsilon)  = \sup\limits_{\|\eta_i\|_M\leq \epsilon~\forall i} &\sup \limits_{\pi} &\int_{\mathcal{X}} \ell(f_\beta (\bar{X})) \dd \QQ  \\
& \text{s.t}  & \QQ \in \mathcal{P}(\mathcal{X}), \pi \in  \mathcal{P}(\mathcal{X}\times \mathcal{X}), \\
& & P_1 \pi = \QQ, P_2\pi = \Pnom \\
& & \int_{\mathcal{X}\times \mbb{X}} c(\bar{X},X) \dd \pi\leq \rho \\
& & \int_{\mathcal{X}\times \mbb{X}}\mathbb{I}_{X_i}(X)\cdot\bar{X} \dd\pi = \frac{1}{N}(X_i + \eta_i)\quad  \forall i \in [N],
\end{array}
\end{equation}
where $\mbb{X} = \{X_1,X_2,\cdots, X_N\}$. Then, we apply Theorem \ref{thm:strong_duality}  (i.e., strong duality) to the inner maximization problem, i.e., 
\begin{equation*}
\begin{aligned}
& \mc L_\beta(\Pnom, \rho, \epsilon) \\ 
= & \sup\limits_{\|\eta_i\|_M\leq \epsilon~\forall i} \inf_{\lambda\ge 0,\alpha}\lambda\rho+\frac{1}{N}\sum_{i=1}^N \alpha_i^\top(X_i+\eta_i)+\frac{1}{N}\sum_{i=1}^N\sup_{\bar{X}_i} \left[\ell(f_\beta(\bar{X}_i))-\alpha_i^\top\bar{X}_i -\lambda c(\bar{X}_i,X_i)\right] 
\\ = & \sup\limits_{\|\eta_i\|_M\leq \epsilon~\forall i} \inf_{\lambda\ge 0,\alpha}\lambda\rho+\frac{1}{N}\sum_{i=1}^N \alpha_i^\top \eta_i+\frac{1}{N}\sum_{i=1}^N\sup_{\Delta_i} \left[\ell(f_\beta(X_i+\Delta_i))-\alpha_i^\top\Delta_i -\lambda \|\Delta_i\|^2_M\right].  
 \end{aligned}
\end{equation*}
The second equality follows by setting $\bar{X}_i = X_i + \Delta_i$. As $0<\epsilon < +\infty$ and $M$ is a positive definite matrix, the set $\{(\eta_1, \ldots, \eta_N)~:\|\eta_i\|_M\leq\epsilon~\forall i\in[N]\}$ is a compact set. Consider the following mapping
\[
(\eta, \lambda, \alpha) \mapsto \lambda\rho+\frac{1}{N}\sum_{i=1}^N \alpha_i^\top \eta_i+\frac{1}{N}\sum_{i=1}^N\sup_{\Delta_i} \left[\ell(f_\beta(X_i+\Delta_i))-\alpha_i^\top\Delta_i -\lambda \|\Delta_i\|^2_M\right].
\]
It is easy to see that this mapping is linear, and thus concave, in $\eta$. Moreover, it is convex in $(\lambda, \alpha)$ as the pointwise supremum of a class of convex functions (i.e., the inner function over $(\lambda,\alpha)$ is linear) is always convex. From Sion's minimax theorem~\cite{sion1958general}, we can interchange the outer supremum and infimum operators to obtain
\begin{equation*}
\begin{aligned}
 & \mc L_\beta(\Pnom, \rho, \epsilon)  \\
  = & \inf_{\lambda\ge 0,\alpha}\sup\limits_{\|\eta_i\|_M\leq \epsilon~\forall i} ~\lambda\rho+\frac{1}{N}\sum_{i=1}^N \alpha_i^\top\eta_i+\frac{1}{N}\sum_{i=1}^N\sup_{\Delta_i} \left[\ell(f_\beta(X_i+\Delta_i))-\alpha_i^\top\Delta_i -\lambda \|\Delta_i\|^2_M\right].  
 \end{aligned}
\end{equation*}
For any feasible value of $(\lambda, \alpha)$, the optimal solution in $\eta_i$ is either
\[
    \eta_i^\star = M^{-1} \alpha_i \quad \text{or} \quad \eta_i^\star = -M^{-1} \alpha_i.
\]
We thus have
\begin{align*}
& \mc L_\beta(\Pnom, \rho, \epsilon) = \inf_{\lambda\ge 0,\alpha}\lambda\rho+\frac{\epsilon}{N}\sum_{i=1}^N \|\alpha_i\|_{M^{-1}}+\frac{1}{N}\sum_{i=1}^N\sup_{\Delta_i} \left[\ell(f_\beta(X_i+\Delta_i))-\alpha_i^\top\Delta_i -\lambda \|\Delta_i\|_M^2\right].
\end{align*}
We complete the proof. 
\end{proof}

\begin{proof}[Proof of Theorem~\ref{thm:relaxed}]

Taking $\ell(f_\beta(X))=\ell(\beta^\top X)$ with the second derivative of $\nabla^2 \ell (\cdot) = \gamma$ in~\eqref{eq:perturbed_general}, we have 
\begin{align*}
\mc L_\beta(\Pnom, \rho, \epsilon)  = & \inf_{\lambda\ge 0,\alpha}\lambda\rho+\frac{\epsilon}{N}\sum_{i=1}^N \|\alpha_i\|_{M^{-1}}+\frac{1}{N}\sum_{i=1}^N\sup_{\Delta_i} \left[\ell(\beta^\top(X_i+\Delta_i))-\alpha_i^\top\Delta_i -\lambda \|\Delta_i\|_M^2\right]\\
= & \inf_{\lambda\ge0,\alpha} \lambda\rho+\frac{\epsilon}{N}\sum_{i=1}^N \|\alpha_i\|_{M^{-1}}+ \\
& \quad  \frac{1}{N}\sum_{i=1}^N\sup_{\Delta_i} \left[\ell(\beta^\top X_i)+\nabla\ell(\beta^\top X_i
)\beta^\top\Delta_i+\frac{\gamma}{2}\|\beta^\top\Delta_i\|^2-\alpha_i^\top\Delta_i -\lambda \|\Delta_i\|_M^2\right] \\
= & \frac{1}{N}\sum_{i=1}^N\ell(\beta^\top X_i)+ \inf_{\lambda\ge0,\alpha} \lambda\rho+\frac{\epsilon}{N}\sum_{i=1}^N \|\alpha_i\|_{M^{-1}}+ \\
& \quad \frac{1}{N}\sum_{i=1}^N\max_{\Delta_i} \left[(\nabla\ell(\beta^\top X_i
)\beta-\alpha_i)^\top\Delta_i+\frac{\gamma}{2}\|\beta^\top\Delta_i\|^2-\lambda \|\Delta_i\|_M^2\right]\\
= & \frac{1}{N}\sum_{i=1}^N\ell(\beta^\top X_i)+ \inf_{\lambda \ge 0,\alpha} \lambda\rho+\frac{\epsilon}{N}\sum_{i=1}^N \|\alpha_i\|_{M^{-1}}+\\
& \quad \frac{1}{N}\sum_{i=1}^N\sup_{\Delta_i} \left[(\nabla\ell(\beta^\top X_i
)\beta-\alpha_i)^\top\Delta_i+\frac{\gamma}{2}\|\beta^\top\Delta_i\|^2-\lambda \|\Delta_i\|_M^2\right].
\end{align*}
Similar with the argument to proof  Proposition~\ref{prop:original} in the appendix, see section \ref{sec:prop_original} for details, 
\begin{comment}
\viet{check carefully because we now have 2 proofs of Proposition~\ref{prop:original}},
\end{comment}
we can conclude that $0 \leq \lambda  \leq  \frac{\gamma}{2} \|\beta\|_{M^{-1}}^2$. As such, we analyze two cases separately.  

\textbf{Case 1: suppose that the optimal value of $\lambda^\star = \frac{\gamma}{2}\|\beta\|_{M^{-1}}^2$}. As we discussed the exact martingale DRO mode in the last subsection , we have $\alpha^\star_i = \nabla \ell(\beta^\top X_i) \beta$ and  
\begin{equation}
\label{eq:obj_equa}
\mc L_1^\star(\Pnom, \rho, \epsilon) = \frac{1}{N}\sum_{i=1}^N\ell(\beta^\top X_i) + \frac{\rho \gamma}{2}\|\beta\|_{M^{-1}}^2 + \frac{\epsilon}{N}\sum_{i=1}^N \|\nabla \ell(\beta^\top X_i)\beta\|_{M^{-1}}.
\end{equation}
\textbf{Case 2: suppose that the optimal value of $\lambda^\star > \frac{\gamma}{2}\|\beta\|_{M^{-1}}^2$.}
For any fixed $i = 1, \ldots, N$. Define 
\[
F(\lambda,\alpha) = \max_{\rho} \left[(\nabla\ell(\beta^\top X_i
)\beta-\alpha_i)^\top\Delta_i+\frac{\gamma}{2}\|\beta^\top\Delta_i\|^2-\lambda \|\Delta_i\|_M^2\right]. 
\]
As $\lambda^\star > \frac{\gamma }{2}\|\beta\|_{M^{-1}}^2$, the inner maximization with respect to $\Delta_i$ is strongly convex. Consequently, it is necessary and sufficient to study its first-order optimality condition:
\begin{equation}
 (\nabla\ell(\beta^\top X_i
)\beta-\alpha_i) + (\gamma \beta\beta^\top -2\lambda M) \Delta_i = 0. 
\end{equation}
Then, we obtain the optimal solution and the optimal value,  
\begin{equation}
    \Delta_i^\star = (2\lambda M-\gamma \beta\beta^\top)^{-1}(\nabla\ell(\beta^\top X_i
)\beta-\alpha_i),
\end{equation}
where the matrix inversion is valid as $\lambda^\star > \frac{\gamma}{2}\|\beta\|_{M^{-1}}^2$ and 
\begin{equation*}
    \begin{aligned}
          F(\lambda,\alpha) & =\lambda\|\Delta_i^\star\|^2_M -\frac{\gamma}{2}\|\beta^\top \Delta_i^\star\|^2\\ 
          & = (\Delta_i^\star)^\top\left(\lambda M-\frac{\gamma}{2}\beta\beta^\top\right)^{-1}\Delta_i^\star\\
          & = \frac{1}{4}(\nabla\ell(\beta^\top X_i)\beta-\alpha_i)^\top\left(\lambda M-\frac{\gamma}{2}\beta\beta^\top\right)^{-1}(\nabla\ell(\beta^\top X_i)\beta-\alpha_i).
    \end{aligned}
\end{equation*}
For simplicity, let us ignore the empirical loss at first, which is the constant w.r.t.~the dual variables $\lambda$ and $\alpha$. 
\begin{align*}
& \min_{\lambda,\alpha} \lambda\rho+\frac{\epsilon}{N}\sum_{i=1}^N \|\alpha_i\|_{M^{-1}}+\frac{1}{N}\sum_{i=1}^N\max_{\Delta_i} \left[(\nabla\ell(\beta^\top X_i
)\beta-\alpha_i)^\top\Delta_i+\frac{\gamma}{2}\|\beta^\top\Delta_i\|^2-\lambda \|\Delta_i\|^2\right] \\
= & \min_{\lambda>\frac{\gamma}{2}\|\beta\|^2_{M^{-1}},\alpha} \lambda\rho+\frac{\epsilon}{N}\sum_{i=1}^N \|\alpha_i\|_{M^{-1}}+ \\
& \frac{1}{4N}\sum_{i=1}^N(\nabla\ell(\beta^\top X_i
)\beta-\alpha_i)^\top\left(\lambda M-\frac{\gamma}{2}\beta\beta^\top\right)^{-1}(\nabla\ell(\beta^\top X_i
)\beta-\alpha_i).
\end{align*}

The resulting structure of $(\lambda,\alpha)$ is still quite complicated. To further characterize the structure of the optimal solution, we utilize the parallel structure of $\alpha$ and focus on the corresponding subproblem as follow: 
\begin{equation}
\label{eq:alpha_sub}
\min_{\alpha_i} \epsilon \|\alpha_i\| + \frac{1}{4} (\nabla \ell(\beta^\top X_i
)\beta-\alpha_i)^\top\left(\lambda I-\frac{\gamma}{2}\beta\beta^\top\right)^{-1}(\nabla\ell(\beta^\top X_i
)\beta-\alpha_i).
\end{equation}
By the Sherman–Morrison Formula (i.e., see Fact \ref{Fact:SMF}), we have 

\begin{align*}
\left(\lambda M-\frac{\gamma}{2}\beta\beta^\top\right)^{-1} & = (\lambda M)^{-1} + \frac{(\lambda M)^{-1}(\frac{\gamma}{2}\beta\beta^\top)(\lambda M)^{-1}}{1-\frac{\gamma}{2}\beta^\top(\lambda M)^{-1}\beta} \\
& = \frac{1}{\lambda} M^{-1}+\frac{\gamma M^{-1} \beta\beta^\top M^{-1}}{\lambda(2\lambda -\gamma \|\beta\|_{M^{-1}}^2)}. 
\end{align*}
% \[
% \left(\lambda I-\frac{\gamma}{2}\beta\beta^\top\right)^{-1} = \frac{1}{\lambda}I +\frac{\gamma}{\lambda(2\lambda-\gamma\|\beta\|^2)}\beta\beta^\top. 
% \]
% \Jiajin{cannot be easily extended to the general M-cost function. }
Together with \eqref{eq:alpha_sub}, 
\begin{equation*}
\label{eq:alpha_sub_2}
\min_{\alpha_i} \epsilon \|\alpha_i\|_{M^{-1}}+ \frac{1}{4} (\nabla \ell(\beta^\top X_i
)\beta-\alpha_i)^\top\left(\frac{1}{\lambda} M^{-1}+\frac{\gamma M^{-1} \beta\beta^\top M^{-1}}{\lambda(2\lambda -\gamma \|\beta\|_{M^{-1}}^2)}\right)(\nabla\ell(\beta^\top X_i
)\beta-\alpha_i).
\end{equation*}
Similarly, as the minimization problem w.r.t.~$\beta$ is strongly convex, it is sufficient to study its first-order optimality condition. WLOG, we can assume the optimal solution $\alpha_i \neq 0$ to get rid of the non-smooth point. Then, we have 
\begin{align*}
 0 = & \frac{\epsilon M^{-1} \alpha_i }{\|\alpha_i\|_{M^{-1}}} + \frac{1}{2}
\left(\frac{1}{\lambda} M^{-1}+\frac{\gamma M^{-1} \beta\beta^\top M^{-1}}{\lambda(2\lambda -\gamma \|\beta\|_{M^{-1}}^2)}\right)(\nabla\ell(\beta^\top X_i
)\beta-\alpha_i)  \\
& = \left(\frac{\epsilon}{\|\alpha_i\|_{M^{-1}}}-\frac{1}{2\lambda}\right) M^{-1}\alpha_i + \left(\frac{\nabla \ell(\beta^\top X_i)}{2\lambda} + \frac{\gamma \left( \nabla \ell(\beta^\top X_i) \|\beta\|_{M^{-1}}^2-\beta^\top M^{-1}\alpha_i\right)}{\lambda(2\lambda -\gamma \|\beta\|_{M^{-1}}^2)}\right)M^{-1}\beta.
\end{align*}
% \Jiajin{some corner cases have to be further checked!}
% \[
% \left(\frac{\epsilon}{\|\alpha_i\|}-\frac{1}{2\lambda}\right)\alpha_i +\left(\frac{\nabla\ell(\beta^\top X_i)}{2\lambda}+\frac{\gamma(\nabla\ell(\beta^\top X_i
% )\|\beta\|^2-\beta^\top\alpha_i)}{\lambda(2\lambda-\gamma\|\beta\|^2)}\right)\beta=0.
% \]
It is easy to observe that the optimal solution $\alpha_i$ is parallel to $\beta$. The conclusion is also valid for the corner case $\alpha_i = 0$. Consequently, problem \eqref{eq:alpha_sub} can be reduced to a one-dimensional problem, i.e., $\alpha_i = s_i \beta$, 
\begin{equation}
    \label{eq:one_s}
    \min_{s_i} \epsilon \|\beta\|_{M^{-1}}|s_i|+\frac{1}{4}\left(\frac{\|\beta\|_{M^{-1}}^2}{\lambda}+\frac{\gamma\|\beta\|_{M^{-1}}^4}{\lambda(2\lambda-\gamma\|\beta\|_{M^{-1}}^2)}\right)(\nabla \ell(\beta^\top X_i)-s_i)^2.
\end{equation}
Putting all pieces together, we get 
\begin{align*}
 & \min_{\lambda>\frac{\gamma}{2}\|\beta\|^2_{M^{-1}},s} \lambda\rho+\frac{\epsilon \|\beta\|_{M^{-1}}}{N}\|s\|_1+\frac{1}{4N}\left(\frac{\|\beta\|_{M^{-1}}^2}{\lambda}+\frac{\gamma\|\beta\|_{M^{-1}}^4}{\lambda(2\lambda-\gamma\|\beta\|_{M^{-1}}^2)}\right)\sum_{i=1}^N(\nabla \ell(\beta^\top X_i)-s_i)^2 \\
 = & \min_{\lambda>\frac{\gamma}{2}\|\beta\|^2_{M^{-1}},s} \lambda\rho+\frac{\epsilon \|\beta\|_{M^{-1}}}{N}\|s\|_1+\frac{1}{4N}\left(\frac{\|\beta\|_{M^{-1}}^2}{\lambda}+\frac{\gamma\|\beta\|_{M^{-1}}^4}{\lambda(2\lambda-\gamma\|\beta\|_{M^{-1}}^2)}\right)\|G_\beta-s\|_2^2 \\
 = & \min_{\lambda>\gamma/2,s} \lambda\rho\|\beta\|_{M^{-1}}^2+\frac{\epsilon \|\beta\|_{M^{-1}}}{N}\|s\|_1+\frac{1}{4N}\left(\frac{1}{\lambda}+\frac{\gamma}{\lambda(2\lambda-\gamma)}\right)\|G_{\beta}-s\|_2^2, \\
  = & \min_{\lambda>\gamma/2,s} \lambda\rho\|\beta\|^2+\frac{\epsilon \|\beta\|}{N}\|s\|_1+\frac{2}{4N(2\lambda-\gamma)}\|G_{\beta}-s\|_2^2,
\end{align*}
where $G_\beta =(\nabla \ell(\beta^\top X_1),\cdots,\nabla \ell(\beta^\top X_N))$. 
By changing the variables $\theta = \frac{2}{2\lambda-\gamma}$ and $\lambda = \frac{1}{\theta}+\frac{\gamma}{2}$ where $\theta>0$, 
\begin{align*}
 &\min_{s} \frac{\gamma\rho}{2}\|\beta\|^2_{M^{-1}}+\frac{\epsilon}{N} \|\beta\|_{M^{-1}}\|s\|_1 + \min_{\theta>0} \frac{\rho}{\theta} \|\beta\|^2_{M^{-1}}+\frac{\theta}{4N}\|G_{\beta}-s\|_2^2.
\end{align*}
Here, the optimal solution $\theta^\star$ is 
\[
\theta^\star = \frac{2\sqrt{N{\rho}} \|\beta\|_{M^{-1}}}{\|G_\beta-s\|}.
\]
Consequently, we have 
\begin{equation}
 \mc L_2^\star(\Pnom, \rho, \epsilon) = \E_{\Pnom}[\ell(\beta^\top X)]+ \frac{\gamma\rho}{2}\|\beta\|^2_{M^{-1}}+\|\beta\|_{M^{-1}}\min_s\left(\frac{\epsilon }{N}\|s\|_1  + \sqrt{\frac{\rho}{N}}\|G_\beta -s\|_2\right).
\end{equation}

By applying Lemma \ref{lm:opt_1}, we have 
\begin{align*}
& \min_s\left(\frac{\epsilon }{N}\|s\|_1  + \sqrt{\frac{\rho}{N}}\|G_\beta -s\|_2\right) = \frac{\epsilon }{N}\|G_\beta\|_1
\end{align*}
when $\epsilon \leq \sqrt{\rho}$. Then, combining these two cases, we can obtain, 
\[ \mc L_\beta(\Pnom, \rho, \epsilon) = \E_{\Pnom}[\ell(\beta^\top X)] +  \frac{\gamma \rho}{2} \|\beta\|^2_{M^{-1}} + \|\beta\|_{M^{-1}}\min_s\left(\frac{\epsilon }{N}\|s\|_1  + \sqrt{\frac{\rho}{N}}\|G_\beta -s\|_2\right).\]
We complete the proof.
\end{proof}

\section{Useful Technical Lemmas}
\label{sec:lemmas}
\begin{lemma}
\label{lm:range}
%$A$ is a closed set in $\R^d$ (\jiajin{can be removed?})
Suppose that $F:\mathcal{M}_+(A)\rightarrow \R^d$ defined by $F(\mu) = \int_A X \mu(\dd x)$, then we have $\textnormal{cone}(A)\subseteq \textnormal{Range}(F)$. %$\textnormal{cone}(A)\subseteq \textnormal{Range}(F) \subseteq \textnormal{cl}(\textnormal{cone}(A))$. 
\end{lemma}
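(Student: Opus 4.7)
The plan is to prove the inclusion constructively by exhibiting, for each element of $\textnormal{cone}(A)$, an explicit atomic measure in $\mathcal{M}_+(A)$ that $F$ maps to it. Concretely, I would fix an arbitrary $y\in\textnormal{cone}(A)$ and use the definition of the conic hull to write $y=\sum_{i=1}^{k}t_i a_i$ for some finite $k$, nonnegative scalars $t_i\ge 0$, and points $a_i\in A$. (If instead the paper means $\textnormal{cone}(A)=\{ta:t\ge0,\ a\in A\}$, the argument specializes trivially to $k=1$.)

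Next, I would define the candidate measure $\mu \Let \sum_{i=1}^{k} t_i\,\delta_{a_i}$. Since each $\delta_{a_i}$ is a Borel probability measure supported on $A$ and each coefficient $t_i$ is nonnegative, $\mu$ is a finite nonnegative Borel measure on $A$, hence $\mu\in\mathcal{M}_+(A)$. A direct computation using the linearity of integration and the defining property of Dirac measures then gives
\[
F(\mu)=\int_{A} X\,\mu(\dd x)=\sum_{i=1}^{k} t_i \int_{A} X\,\delta_{a_i}(\dd x)=\sum_{i=1}^{k} t_i a_i = y,
\]
so $y\in\textnormal{Range}(F)$. Since $y$ was arbitrary, $\textnormal{cone}(A)\subseteq\textnormal{Range}(F)$, completing the argument.

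I do not anticipate a real obstacle here: the construction is essentially a one-line atomic-measure argument. The only mild subtlety worth flagging is that one should verify that the Dirac measures $\delta_{a_i}$ belong to $\mathcal{M}_+(A)$ (which requires $a_i\in A$, guaranteed by the definition of $\textnormal{cone}(A)$) and that the integral $\int_A X\,\delta_{a_i}(\dd x)=a_i$ is well-defined, which is automatic because the identity map on $A\subseteq\mathbb{R}^d$ is bounded on the finite support. No measurability or integrability issues arise because $\mu$ has finite support.
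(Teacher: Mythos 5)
Your proof is correct and follows exactly the same route as the paper's own argument: both decompose an arbitrary element of $\textnormal{cone}(A)$ as a finite nonnegative combination $\sum_{i=1}^k w_i x_i$ with $x_i \in A$, take the atomic measure $\mu = \sum_{i=1}^k w_i \delta_{x_i} \in \mathcal{M}_+(A)$, and verify $F(\mu) = \sum_{i=1}^k w_i x_i$ by linearity of integration against Dirac measures. The extra remarks you flag (well-definedness of the integral, membership of the Dirac measures in $\mathcal{M}_+(A)$) are minor points the paper leaves implicit, so nothing substantive differs.
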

\begin{proof}[Proof of Lemma~\ref{lm:range}]
Recall that 
\begin{equation}
   \textnormal{cone}(A)=\left\{\sum_{i=1}^{k} w_{i} x_{i}~:~x_{i} \in A,~w_{i} \in \mathbb{R}_{\geq 0},~k \in \mathbb{N}\right\}. 
\end{equation}
For any $x \in \textnormal{cone}(A)$, then there exists $x_1,x_2,\ldots,x_k \in A$ and $\{w_i\}_{i=1}^k \ge 0$ such that $x = \sum_{i=1}^{k}w_{i}x_{i}$. Pick $\mu = \sum_{i=1}^k w_i \delta_{x_i}$, where $\delta_{x_i}$ are Dirac's delta measure at $x_i$. Then $\mu \in \mc M_+(A)$, and
\[
x = \sum_{i=1}^{k}w_{i}x_{i} = \sum_{i=1}^{k}w_{i}\int_A x \delta_{x_i}(\dd x) = \int_A x \mu(\dd x), 
\]
which leads to the postulated claim.
\end{proof}
\begin{fact}[Sherman–Morrison Formula]
\label{Fact:SMF}
Suppose $A \in \mathbb{R}^{n \times n}$ is an invertible square matrix and $u, v \in \mathbb{R}^{n}$ are column vectors. Then $A+u v^{\top}$ is invertible if and only if  $1+v^{\top} A^{-1} u \neq 0$. In this case,
$$
\left(A+u v^{\top}\right)^{-1}=A^{-1}-\frac{A^{-1} u v^{\top} A^{-1}}{1+v^{\top} A^{-1} u} .
$$
Here, $u v^{\top}$ is the outer product of two vectors $u$ and $v$. 
\end{fact}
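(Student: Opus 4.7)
The plan is to prove the Sherman-Morrison Formula by combining a one-line kernel argument for one direction of the invertibility equivalence with direct verification of the closed-form inverse for the other direction.

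First, for the contrapositive of the ``only if'' direction, I would suppose $1 + v^\top A^{-1} u = 0$ and exhibit a nonzero kernel vector of $A + uv^\top$. The computation is immediate:
\begin{equation*}
(A + uv^\top)\,A^{-1}u \;=\; u + u\,(v^\top A^{-1} u) \;=\; u\,(1 + v^\top A^{-1} u) \;=\; 0.
\end{equation*}
Since $A^{-1}u \neq 0$ (otherwise $u = 0$, forcing $1 + v^\top A^{-1}u = 1 \neq 0$, a contradiction), this shows that $A + uv^\top$ is singular. Hence invertibility of $A + uv^\top$ implies $1 + v^\top A^{-1} u \neq 0$.

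Next, under the assumption $1 + v^\top A^{-1} u \neq 0$, I would propose the candidate
$B \Let A^{-1} - \dfrac{A^{-1} u v^\top A^{-1}}{1 + v^\top A^{-1} u}$
and verify $(A + uv^\top)\,B = I$ by direct multiplication. Writing the scalar $s \Let v^\top A^{-1} u \in \R$, the key cancellation is
\begin{equation*}
(A + uv^\top)\,B \;=\; I + u v^\top A^{-1} - \frac{(u + s\,u)\,v^\top A^{-1}}{1+s} \;=\; I + u v^\top A^{-1} - u v^\top A^{-1} \;=\; I,
\end{equation*}
and the entirely symmetric calculation yields $B\,(A + uv^\top) = I$. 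This single computation simultaneously establishes invertibility of $A + uv^\top$, the ``if'' direction of the equivalence, and the explicit formula for the inverse.

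There is no real obstacle beyond careful bookkeeping of the scalar $s$ through the expansion; the main subtlety is simply to notice that the factor $(1+s)$ appearing in the numerator exactly matches the denominator, which is what makes the cross terms collapse. The degenerate case $u = 0$ is handled automatically, since then $A + uv^\top = A$ and both sides of the formula reduce to $A^{-1}$.
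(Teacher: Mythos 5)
Your proof is correct and complete. Note that the paper itself states this as a \emph{Fact} without proof, treating the Sherman--Morrison formula as a classical known result, so there is no paper proof to compare against; your argument is the standard one. Both directions are handled cleanly: the kernel vector $A^{-1}u$ disposes of the ``only if'' direction (with the degenerate case $u=0$ correctly ruled out), and the direct multiplication $(A+uv^\top)B = I$, together with the symmetric computation $B(A+uv^\top)=I$, establishes both invertibility and the explicit formula in one stroke. The key observation — that the scalar $s = v^\top A^{-1}u$ factors out so that $(1+s)$ cancels the denominator — is exactly the right bookkeeping, and nothing further is needed.
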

\begin{lemma}
\label{lm:opt_1}
Suppose that $y \in \R^d$ satisfying $|y_1|\leq |y_2|\leq \cdots \leq |y_d|$ and $\vartheta>0$. Then, there exist $1<j<d$ and $\alpha >0$ such that the problem
\[
\min_{x\in \R^d}~\|x\|_1 + \vartheta \|y-x\|_2  
\]
admits the following optimal solution
\begin{equation}
x^\star(\vartheta) = \begin{cases} 
\bm{0} & \text{if } \vartheta\leq \frac{\|y\|_2}{\|y\|_\infty}, \\ [\bm{0}_{1:j},y_{j+1:d}-\alpha \sign(y_{j+1:d})] &  \text{if } \frac{\|y\|_2}{\|y\|_\infty}<\vartheta<\sqrt{d}, \\  
y & \text{if } \vartheta \ge \sqrt{d}.
\end{cases}
\end{equation}
\end{lemma}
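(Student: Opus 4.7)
The objective $g(x) = \|x\|_1 + \vartheta\|y-x\|_2$ is a sum of two convex functions, so optimality reduces to the subdifferential inclusion $0 \in \partial g(x^\star)$. My plan is to verify this condition case by case for each of the three proposed forms of $x^\star$, exploiting the structure of $\partial\|\cdot\|_1$ (componentwise sign, with $[-1,1]$ at zero coordinates) and $\partial\|y-\cdot\|_2$ (the unit Euclidean ball at the kink $x = y$, and the single direction $-(y-x)/\|y-x\|_2$ elsewhere).

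First, for the boundary regime $\vartheta \leq \|y\|_2/\|y\|_\infty$, I would verify $x^\star = 0$ by exhibiting a subgradient $v \in \partial\|0\|_1 = [-1,1]^d$ with $v = \vartheta y/\|y\|_2$; feasibility reduces exactly to $\vartheta\|y\|_\infty/\|y\|_2 \leq 1$. For the other boundary regime $\vartheta \geq \sqrt{d}$, I would verify $x^\star = y$ by selecting $u = -\sign(y)/\vartheta$ in the unit Euclidean ball for $\partial\|y-\cdot\|_2$ at $x = y$; feasibility $\|u\|_2 \leq 1$ gives $\vartheta \geq \|\sign(y)\|_2 = \sqrt{d}$ under the implicit assumption that all $y_i \neq 0$.

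Second, for the intermediate case, I would parameterize the candidate by $\alpha = \|y - x^\star\|_2/\vartheta > 0$. Writing the stationarity condition $\partial\|x^\star\|_1 \ni \vartheta(y-x^\star)/\|y-x^\star\|_2$ componentwise gives the classical soft-thresholding form
\[
x^\star_i = \begin{cases} 0 & \text{if } |y_i| \leq \alpha, \\ y_i - \alpha\sign(y_i) & \text{if } |y_i| > \alpha, \end{cases}
\]
so the sort order $|y_1|\le\cdots\le|y_d|$ picks out an index $j$ with $|y_j| \leq \alpha < |y_{j+1}|$, yielding the block form in the lemma. Substituting back into the defining identity $\alpha^2 \vartheta^2 = \|y - x^\star\|_2^2 = \sum_{i\le j} y_i^2 + (d-j)\alpha^2$ produces the scalar equation
\[
\alpha^2 = \frac{\sum_{i=1}^j y_i^2}{\vartheta^2 - (d-j)},
\]
which together with the bracketing $|y_j|^2 \leq \alpha^2 < |y_{j+1}|^2$ uniquely pins down the pair $(j,\alpha)$.

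The main technical obstacle will be showing that a valid index $j$ exists for every $\vartheta$ in the open interval $(\|y\|_2/\|y\|_\infty,\sqrt{d})$. I plan to address this by defining breakpoints $\vartheta_j$ at which the candidate $\alpha$ obtained from the displayed equation equals $|y_{j+1}|$, showing that $\vartheta_{d-1} = \|y\|_2/\|y\|_\infty$ and $\vartheta_0 = \sqrt{d}$, and checking monotonicity in $j$ so that the breakpoints partition the interval. Intuitively, as $\vartheta$ grows from $\|y\|_2/\|y\|_\infty$ to $\sqrt{d}$, more and more coordinates of $x^\star$ become active (starting from only $y_d$ at the lower endpoint and approaching all coordinates at the upper endpoint), so $j$ decreases monotonically with $\vartheta$ and the three regimes glue together consistently.
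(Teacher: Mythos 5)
Your proposal is correct and follows essentially the same route as the paper: both arguments reduce to checking the subdifferential optimality condition $0 \in \partial\|x\|_1 + \vartheta\,\partial\|y-x\|_2$ in the same three regimes, with identical subgradient certificates at $x^\star = 0$ and $x^\star = y$. The only difference is in the middle regime, where the paper merely argues by contradiction that the optimizer must have a zero coordinate and then asserts the block form, whereas you derive the full soft-thresholding representation with the explicit value $\alpha^2 = \bigl(\sum_{i\le j} y_i^2\bigr)/\bigl(\vartheta^2 - (d-j)\bigr)$ and a breakpoint-partition argument for the existence of $j$ — a more constructive treatment that actually supplies details the paper's sketch leaves implicit.
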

\begin{proof}[Proof of Lemma~\ref{lm:opt_1}]
The basic strategy here is to check the first-order optimality condition. 
\begin{itemize}
\item  If $\vartheta \leq \frac{\|y\|_2}{\|y\|_\infty} $ we have 
\[
0 \in \partial \|x\|_1|_{x=0} - \vartheta \frac{y}{\|y\|}
\]
holds. Thus, 0 is the optimal solution. 
\item Moreover, if $\vartheta \ge \sqrt{d}$, we have 
\[
0 \in \sign(y) + \vartheta \partial \|x-y\|_2 |_{x=y}
\]
holds as $v \in \partial \|x-y\|_2 |_{x=y}$ satisfies $\|v\|_2\leq 1$.
\item The most complicated case is the middle one, i.e., $\frac{\|y\|_2}{\|y\|_\infty}<\vartheta<\sqrt{d}$. Here, we are trying to characterize the structure of the optimal solution. Without of loss generality, we assume that $y = \textnormal{sort}(y,'\text{abs}')$, i.e., sorted by its absolute value. Still, we focus on its first-order optimality condition: 
\[
0 \in \partial \|x\|_1 + \vartheta \frac{x-y}{\|x-y\|_2},
\]
as $x$ cannot equal to $y$ derived from the condition $\frac{\|y\|_2}{\|y\|_\infty}<\vartheta<\sqrt{d}$. Furthermore, the optimal solution $x$ shares the same sign of $y$ and $|x_i|\leq |y_i| , \forall i \in [d]$, otherwise you can always decrease the objective value by changing the sign. Next, we will argue that the optimal solution admits $x^*_i = 0$ for some index $i$. 
We prove it by contradiction. If we assume $x^* \neq 0$, there exists a constant $\alpha >0$ such that
\[x^* = y - \alpha \textnormal{sign}(y). \]
Then, the first-order optimality  condition will not hold, i.e., 
\[
\sign(y) + \vartheta  \frac{\alpha \sign(y)}{\|\alpha \sign(y)\|_2 }\neq 0,
\]
as $\vartheta<\sqrt{d}$. As such, there exist $1<j<d$ and a constant $\alpha>0$ such that 
$x^* = [\bm{0}_{1:j},y_{j+1:d}-\alpha \sign(y_{j+1:d})]$. 
\end{itemize}
\end{proof}

% \begin{fact}[Convergence rate of APGM for general composite convex problem]
%  Accelerated proximal gradient method with fixed step size $t \leq 1 / L$ enjoys $\mathcal{O}(1/k^2)$ convergence rate. 
% \end{fact}

\section{Convergence Analysis of Optimization Algorithms}
\label{sec:appendix_opt}
Denote $\ell(f_\beta(X)) = h(\beta,X)$ and we make the following blanket assumption: 
\begin{assumption}
\label{ass}
The loss function $h: \Omega \times \mathcal{X} \rightarrow \mathbb{R}$ satisfies the Lipschitzian smoothness conditions
\[
\begin{array}{ll}
\left\|\nabla_{\beta} h(\beta_1,X)-\nabla_{\beta} h(\beta_2,X)\right\| \leq C_{\beta\beta}\left\|\beta_1-\beta_2\right\|, \\ \left\|\nabla_{X} h(\beta,X_1)-\nabla_{X} h(\beta,X_2)\right\| \leq C_{XX}\left\|X_1-X_2\right\|, \\
\left\|\nabla_{\beta} h(\beta,X_1)-\nabla_{\beta} h(\beta,X_2)\right\| \leq C_{\beta X}\left\|X_1-X_2\right\|,\\
\left\|\nabla_{X} h(\beta_1,X)-\nabla_{X} h(\beta_2,X)\right\| \leq C_{X\beta}\left\|\beta_1-\beta_2\right\|,
\end{array}
\]
where $\Omega \subset \R^d$ is a closed convex set. 
\end{assumption}

\paragraph{Derivation of \eqref{eq:ad_form}}
\begin{equation}
\begin{aligned}
     & \min_{\beta} \frac{1}{N}\sum_{i=1}^N \min_{\alpha_i}\max_{\Delta_i} \left[\ell(f_\beta(X_i+\Delta_i))-\alpha_i^\top\Delta_i -\lambda \|\Delta_i\|_M^2 + \epsilon \|\alpha_i\|_{M^{-1}} \right] \\
      \mathop{=}^{(a)}  &   \min_{\beta} \frac{1}{N}\sum_{i=1}^N \max_{\Delta_i} \min_{\alpha_i} \left[\epsilon \|\alpha_i\|_{M^{-1}}-\alpha_i^\top\Delta_i + \ell(f_\beta(X_i+\Delta_i))-\lambda \|\Delta_i\|_M^2 \right] \\
     = & \min_{\beta} \frac{1}{N}\sum_{i=1}^N \max_{\|\Delta_i\|_M \leq \epsilon}  \left[ \ell(f_\beta(X_i+\Delta_i))-\lambda \|\Delta_i\|_{M^{-1}}^2 \right],
\end{aligned}
\end{equation}
where equality $(a)$ follows from the following minimax theorem as the inner maximization over $\Delta_i$ is strongly concave and $\|\alpha_i\|_{M^{-1}}$
is level bounded.

\begin{proof}[Proof of Lemma~\ref{lm:minimax}]
By invoking the general best-case primal-dual relations given in~\cite[Corollary 11.40 (d)]{rockafellar2009variational}, the key ingredient is to check the boundedness of 
\[\left \{x \in \R^n : x = \mathop{\arg\min}_x \max_{y} \left\{ f(x)+x^\top A y-g(y)\right\}\right \}\] and \[\left \{y \in \R^m: y = \mathop{\arg\max}_y \mathop{\min}_x \left\{ f(x)+x^\top A y-g(y)\right\}\right\}.\]
For the purpose of this proof, we use $f^*$ to denote the convex conjugate of $f$, formally defined as
\[
    f^*(z) = \max_{x \in \R^n}~z^\top x - f(x).
\]
Similarly, $g^*$ is the conjugate of $g$. We have:
\begin{itemize}[leftmargin=5mm]
    \item 
  As $f(x)$ is level-bounded, $\max_y f(x)+x^\top Ay-g(y) = f(x) + g^*(A^\top x)$ is also level bounded. 
  \item As $g(y)$ is strongly convex, $\min_x f(x)+x^\top Ay-g(y) = -f^*(-Ay)-g(y)$ is strongly concave and thus its optimal solution set is compact. 
\end{itemize}
The proof is complete. 
\end{proof}

\begin{lemma}
\label{lm:diff}
Let  $h: \Omega \times \mathcal{X} \rightarrow \mathbb{R}$ be differentiable and  $\phi_\lambda(\beta,X)=\sup _{\|\Delta\|_M \leq \epsilon} \{h(\beta, X+\Delta) - \lambda \|\Delta\|_M^2\}$. Suppose that Assumption \ref{ass} holds and $\lambda > \sigma_{\textnormal{min}}(M) C_{XX}$, where $\sigma_{\min}(M)$ is the minimum eigenvalue of $M$. Then, $\phi_\lambda(\cdot,X)$ is differentiable.
\end{lemma}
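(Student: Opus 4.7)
The plan is to apply a Danskin-type envelope theorem, whose applicability hinges on showing that the inner maximization is strongly concave with a unique maximizer that depends continuously on $\beta$.

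First I would establish that the inner objective
$$G(\Delta;\beta,X) \;:=\; h(\beta, X+\Delta) - \lambda \|\Delta\|_M^2$$
is strongly concave in $\Delta$. By Assumption~\ref{ass}, $\Delta \mapsto h(\beta, X+\Delta)$ has $C_{XX}$-Lipschitz gradient in the Euclidean norm, hence is $C_{XX}$-weakly concave in $\Delta$. The penalty $-\lambda\|\Delta\|_M^2$ has Hessian $-2\lambda M \preceq -2\lambda\sigma_{\min}(M)\,I$, and together with $\|\cdot\|_M^2 \geq \sigma_{\min}(M)\|\cdot\|_2^2$ this yields, under the stated bound on $\lambda$, a positive strong-concavity modulus $\mu > 0$. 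Because the feasible set $\{\|\Delta\|_M \leq \epsilon\}$ is a nonempty compact convex set that does not depend on $(\beta,X)$, strong concavity guarantees existence and uniqueness of a maximizer $\Delta^\star(\beta,X)$.

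Next I would show that $(\beta,X)\mapsto \Delta^\star(\beta,X)$ is continuous. Joint continuity of $G$ in all three arguments (inherited from the differentiability of $h$ assumed in Assumption~\ref{ass}) together with the fixed compact feasible region puts us squarely in the setting of Berge's maximum theorem; uniqueness of the maximizer then upgrades upper hemicontinuity of the argmax correspondence to genuine continuity of $\Delta^\star(\cdot,\cdot)$. With this in hand, I would invoke Danskin's theorem (e.g.\ in the form of \cite[Prop.~B.25]{bertsekas1999nonlinear}-type statements already used in \cite{sinha2018certifying}): since $G$ is differentiable in $\beta$ with $\nabla_\beta G$ jointly continuous in $(\Delta,\beta,X)$, and since the maximizer is unique, $\phi_\lambda(\cdot,X)$ is differentiable with
$$\nabla_\beta \phi_\lambda(\beta,X) \;=\; \nabla_\beta h\bigl(\beta,\,X+\Delta^\star(\beta,X)\bigr).$$

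The main obstacle I anticipate is the bookkeeping in the strong-concavity step: one must track carefully how $\sigma_{\min}(M)$ enters the norm comparison $\|\Delta\|_M^2 \geq \sigma_{\min}(M)\|\Delta\|_2^2$ and how the Hessian factor of $2$ from the quadratic is absorbed, so that the stated threshold on $\lambda$ actually delivers a positive modulus $\mu$. (If the paper's threshold is off by a constant factor, this would surface here as a straightforward fix.) Once strong concavity is in place, the combination of Berge's theorem and Danskin's theorem delivers the conclusion essentially mechanically, and in fact yields continuous differentiability of $\phi_\lambda(\cdot,X)$, which is more than what is asserted.
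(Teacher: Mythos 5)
Your proof follows essentially the same route as the paper's: the paper likewise argues that compactness of $\{\Delta : \|\Delta\|_M \le \epsilon\}$ plus strong concavity of the inner objective yields a unique maximizer, and then invokes an envelope-type theorem (\cite[Theorem 10.31]{rockafellar2009variational}, in place of your Danskin/Berge combination) to conclude differentiability of $\phi_\lambda(\cdot,X)$. Your anticipated obstacle about the constant is real, but it is a defect of the lemma (and of the paper's proof, which asserts a strong-concavity modulus of $\lambda - \sigma_{\min}(M)C_{XX}$) rather than of your argument: as your own Hessian bound $-2\lambda M \preceq -2\lambda\sigma_{\min}(M) I$ shows, strong concavity requires $2\lambda\sigma_{\min}(M) > C_{XX}$, i.e.\ $\lambda > C_{XX}/\bigl(2\sigma_{\min}(M)\bigr)$, so $\sigma_{\min}(M)$ should divide rather than multiply $C_{XX}$, and the stated threshold suffices only when $\sigma_{\min}(M) \ge 1/\sqrt{2}$.
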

\begin{proof}
As the set $\|\Delta\|_M \leq \epsilon$ is a compact set whenever $0<\epsilon <\infty$, we know the function $\phi_\lambda(\beta,X)$ is subsmooth function, see Definition 10.29 in \cite{rockafellar2009variational}. Furthermore,  
since $h(\beta, \cdot)$ is $L$-smooth and $\lambda > \sigma_{\textnormal{min}}(M) C_{XX}$, we know that $h(\beta, X+\Delta) - \lambda \|\Delta\|_M^2$ is $(\lambda - \sigma_{\textnormal{min}}(M) C_{XX})$-strongly concave with respect to $\Delta$.  Thus, the inner maximizer is unique and we can invoke \cite[Theomrem 10.31]{rockafellar2009variational} (i.e., an extension of envelope theorem) to obtain the differentiablity.   

Compared with Lemma 1 in \cite{sinha2018certifying}, our proof here is simpler as we utilize the compactness condition. 
\end{proof}
Recall that

\begin{equation}
\min_{\beta } F(\beta):=\frac{1}{N}\sum_{i=1}^N \underbrace{\max_{\|\Delta_i\|_M \leq \epsilon}  \left[\ell(f_\beta(X_i+\Delta_i)))-\lambda \|\Delta_i\|_M^2 \right]}_{\phi_\lambda(\beta,X_i)}.
\end{equation}

\begin{theorem}[Convergence of nonconvex SGD; Adopted from Theorem 2 in \cite{sinha2018certifying}]
\label{thm:}
Suppose that $\Delta_{F} \geq F\left(\beta^{0}\right)-\inf _{\beta} F(\beta)$ and $\mathbb{E}\left[\left\|\nabla F(\beta)-\nabla_{\beta} \phi_{\gamma}(\beta, X)\right\|_{2}^{2}\right] \leq \sigma^{2}$ and we take constant stepsizes $\alpha=\sqrt{\frac{\Delta_{F}}{L_{\phi} K\sigma^{2}}}$ where $L_{\phi}:=C_{\beta \beta}+\frac{C_{\beta X} C_{X\beta}}{\lambda-\sigma_{\textnormal{min}}(M) C_{XX}}$. For $K \geq \frac{L_{\phi} \Delta_{F}}{\sigma^{2}}$, Algorithm 1 satisfies
$$
\frac{1}{K} \sum_{k=0}^{K-1} \mathbb{E}\left[\left\|\nabla F\left(\beta^{k}\right)\right\|_{2}^{2}\right]-\frac{4 C_{\beta X}^{2}}{\lambda - \sigma_{\textnormal{min}}(M) C_{XX}} \epsilon \leq 4 \sigma \sqrt{\frac{L_{\phi} \Delta_{F}}{K}}.
$$
\end{theorem}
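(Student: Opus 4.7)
The plan is to follow the standard template for nonconvex stochastic gradient descent on a smooth objective, modulo two complications: (a) each stochastic gradient $\nabla_\beta \ell(f_{\beta^k}(X^k+\hat\Delta^k))$ is computed at an $\eta$-approximate inner maximizer rather than at the true $\Delta_k^\star$, and (b) the smoothness of the outer objective $F$ must be derived from the inner maximization problem. I would first establish (b), then quantify the bias from (a), and finally apply a one-step descent lemma and telescope.

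\textbf{Step 1: Smoothness of $F$.} By Lemma \ref{lm:diff}, whenever $\lambda > \sigma_{\min}(M) C_{XX}$ the inner maximizer $\Delta^\star(\beta,X) = \argmax_{\|\Delta\|_M\le\epsilon}\{h(\beta,X+\Delta)-\lambda\|\Delta\|_M^2\}$ is unique and $\nabla_\beta \phi_\lambda(\beta,X) = \nabla_\beta h(\beta, X+\Delta^\star(\beta,X))$ by Danskin's theorem. Strong concavity of the inner problem with modulus $\lambda-\sigma_{\min}(M) C_{XX}$, combined with Assumption \ref{ass}, yields the Lipschitz bound
\[
\|\Delta^\star(\beta_1,X) - \Delta^\star(\beta_2,X)\| \le \frac{C_{X\beta}}{\lambda-\sigma_{\min}(M) C_{XX}}\|\beta_1-\beta_2\|,
\]
by a standard fixed-point argument on the first-order optimality condition. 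Composing with the smoothness of $\nabla_\beta h$ then gives $L_\phi$-smoothness of $\phi_\lambda(\cdot,X)$ pointwise in $X$ with constant $L_\phi = C_{\beta\beta} + \frac{C_{\beta X} C_{X\beta}}{\lambda - \sigma_{\min}(M) C_{XX}}$, and averaging over the empirical distribution preserves this constant for $F$.

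\textbf{Step 2: Bias from inexact maximization.} Let $g^k = \nabla_\beta h(\beta^k, X^k + \hat\Delta^k)$ be the stochastic direction actually used, and let $\tilde g^k = \nabla_\beta h(\beta^k, X^k + \Delta_k^\star)$ denote the ideal one, which is an unbiased estimator of $\nabla F(\beta^k)$. By the $C_{\beta X}$-Lipschitz continuity of $\nabla_\beta h(\beta,\cdot)$ and the $\eta$-approximation guarantee, $\|g^k - \tilde g^k\| \le C_{\beta X}\eta$. I would then relate $\eta$ to the perturbation budget $\epsilon$ via the strong concavity: a standard quadratic-growth argument shows that an $\epsilon$-accurate inner oracle (in function value) corresponds to $\eta^2 \le \frac{2\epsilon}{\lambda-\sigma_{\min}(M) C_{XX}}$, so the squared bias contributes at most $\frac{2C_{\beta X}^2}{\lambda-\sigma_{\min}(M) C_{XX}}\epsilon$ after squaring and absorbing constants.

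\textbf{Step 3: Descent lemma and telescoping.} Using $L_\phi$-smoothness,
\[
F(\beta^{k+1}) \le F(\beta^k) - \alpha\langle \nabla F(\beta^k), g^k\rangle + \frac{L_\phi \alpha^2}{2}\|g^k\|^2.
\]
Taking conditional expectation, splitting $g^k = \tilde g^k + (g^k-\tilde g^k)$, and using unbiasedness of $\tilde g^k$ together with the variance bound $\sigma^2$ and the bias bound from Step 2 yields an inequality of the form
\[
\mbb E[F(\beta^{k+1})] \le \mbb E[F(\beta^k)] - \frac{\alpha}{2}\mbb E\|\nabla F(\beta^k)\|^2 + \frac{L_\phi\alpha^2}{2}\sigma^2 + \alpha\cdot\frac{2C_{\beta X}^2}{\lambda-\sigma_{\min}(M) C_{XX}}\epsilon,
\]
after routine Cauchy--Schwarz and Young's inequality manipulations. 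Summing over $k=0,\ldots,K-1$, dividing by $\alpha K/2$, and using the initial suboptimality $\Delta_F$ gives
\[
\frac{1}{K}\sum_{k=0}^{K-1}\mbb E\|\nabla F(\beta^k)\|^2 \le \frac{2\Delta_F}{\alpha K} + L_\phi\alpha\sigma^2 + \frac{4C_{\beta X}^2}{\lambda-\sigma_{\min}(M) C_{XX}}\epsilon.
\]
Plugging in the prescribed stepsize $\alpha = \sqrt{\Delta_F/(L_\phi K \sigma^2)}$ balances the first two terms and produces $4\sigma\sqrt{L_\phi \Delta_F/K}$, completing the claimed bound. The condition $K \ge L_\phi \Delta_F/\sigma^2$ is exactly what ensures $\alpha \le 1/L_\phi$, so that the descent lemma manipulation in Step 3 remains valid.

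\textbf{Main obstacle.} The most delicate step is Step 1, specifically the quantitative Lipschitz estimate on $\Delta^\star(\cdot,X)$ under the Mahalanobis constraint $\|\Delta\|_M\le\epsilon$: one must verify that the projection onto the Mahalanobis ball does not degrade the modulus of strong concavity and that Danskin's theorem applies despite the constraint set being active at the optimum. Beyond that, Steps 2 and 3 are essentially bookkeeping mirroring the argument of \cite{sinha2018certifying}, with $\lambda-\sigma_{\min}(M) C_{XX}$ playing the role of their strong-concavity modulus.
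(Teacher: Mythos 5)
Your proposal is correct and follows essentially the same route as the paper, which does not prove this theorem itself but imports it from \cite{sinha2018certifying} (Theorem 2), supplying only the smoothness ingredient in Lemma~\ref{lm:diff}: your Step 1 (Danskin plus the Lipschitz-argmax bound giving $L_\phi = C_{\beta\beta}+\tfrac{C_{\beta X}C_{X\beta}}{\lambda-\sigma_{\textnormal{min}}(M)C_{XX}}$), Step 2 (strong-concavity/quadratic-growth control of the inexact-oracle bias), and Step 3 (descent lemma, telescoping, and the stepsize choice with $K\ge L_\phi\Delta_F/\sigma^2$ ensuring $\alpha\le 1/L_\phi$) are exactly the structure of that source proof with the constants adapted to the Mahalanobis setting. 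Your Step 2 also correctly resolves the paper's notational slip: the $\epsilon$ appearing in the stated bound plays the role of the inner oracle's function-value accuracy in \cite{sinha2018certifying} (not the Mahalanobis radius used elsewhere in the paper), and bridging it to the algorithm's distance accuracy $\eta$ via quadratic growth is the right reconciliation.
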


\section{Supplementary Experiments}
\label{sec: appedix_experiment}

\begin{figure}
	\centering
 \includegraphics[width=0.5\textwidth]{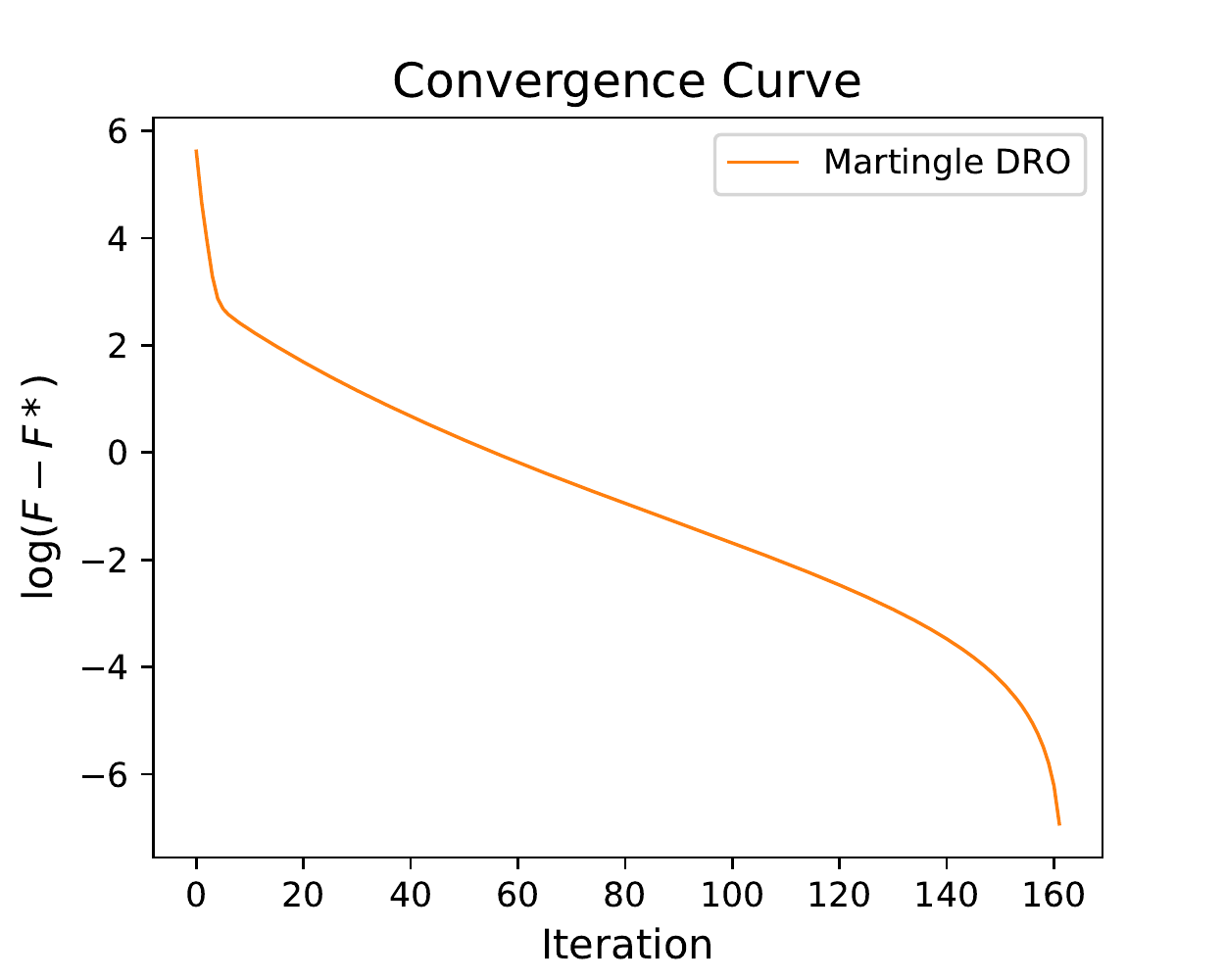} 
	\label{fig:lr_conve}
\end{figure}
First we introduce the attack methods we use in the experiments of adversarial training. 
\begin{definition}[PGD/FGSM attack]
\label{def: PGD attack}
For any model parameter $\beta$, let
\begin{align*}
\Delta z_{i}(\beta) \Let \mathop{\arg\max}_{\norm{\eta}_{p}\leq \xi} \left\{\nabla_{z}\ell(f_{\beta}(z_i, y_i))^\top \eta\right\} 
    \quad
    \text{and}
    \quad
    \tilde z_{i} \Let \Pi_{\mathcal{B}_{\xi, p}(z_i)}\left\{z_i + \alpha~\Delta z_i(\beta)\right\},
\end{align*}
where $\xi$ is the attack step size, $\alpha$ is a pre-specified hyperparameter and $\Pi$ denotes the projection onto $\mathcal{B}_{\xi, p}(z_i) \Let \left\{z: \norm{z - z_i}_{p} \leq \xi\right\}$. When $p = \infty$, the attack is reduced to the Fast Gradient Sign Method (FGSM) \cite{goodfellow2014explaining}. As in \cite{sinha2018certifying}, we also consider the Euclidean case $p=2$, which is a general version of Projected Gradient Descent (PGD) \cite{madry2017towards} with one step.
\end{definition}

\begin{definition}[DRO attack]
\label{def: DRO attack}
For any parameter $\beta$, let
\begin{align*}
    \bar z_i \Let \mathop{\arg\max}_{z \in \mathbb{R}^{d}} \left\{\ell(f_{\beta}(z_i, y_i)) - \gamma~\norm{z - z_i}_2^2\right\},
\end{align*}
where $\gamma$ is is a pre-specified hyperparameter.
\end{definition}

As for the setup of the experiments on the synthetic data (i.e., Figure \ref{fig: biclass}), we use $\lambda = 2$ for both the DRO approach and our approach. Further in our approach, we use a sequence of $\epsilon\in \{0.2, 0.22, \dots, 1.5\}$ to demonstrate the effectiveness of our approach for being less conservative then the conventional DRO approach. Full results are shown in Figure \ref{supfig:synthetic_data}.

As for the setup of the experiments on the MNIST dataset (Figures \ref{fig:MNIST under attack_error}, \ref{fig:MNIST_attack_visualize}), 
% \viet{Should $\hat P_n$ here be $\Pnom$?}
$\mbb E_{\Pnom}\norm{X}_2 = 9.21$ and we choose $\lambda = 0.04\mbb E_{\Pnom}\norm{X}_2$ for training the original DRO and Martingale DRO model. Additionally, we choose $\epsilon = 1.2$ in our model, which is smaller than the average $L^2$ norm of the perturbations suggested by the original DRO model when training on the MNIST dataset. As for the DRO attack, we choose an increasing sequence of $\gamma$ (corresponding to a decaying sequence of perturbation) and collect the images after the largest perturbation so that these methods can classify correctly. Full results are shown in Figures \ref{supfig:MNIST_attack_visualize1}, \ref{supfig:MNIST_attack_visualize2}.

All the hyperparameters conducted in this section have been
fine-tuned via grid search for optimal performance.

In the following, we want to highlight that our method can be applied within three lines of \textit{PyTorch} code modification based on the original DRO approach, which is due to the formulation \eqref{lm:minimax}. The interested reader is referred to our code to see the details.

\begin{lstlisting}
with torch.no_grad():
        delta_norm = delta.norm(p = 2, dim = (2,3))
        delta_index = delta_norm > eps
        delta[delta_index] /=(delta_norm[delta_index][:, None, None]/eps)
\end{lstlisting}

Simple implementation based on the original DRO approach results in little extra computation complexity, which is also shown in the following track of time during experiments on MNIST dataset.
\hspace{15pt}
\begin{table}[h]
    \centering
     \begin{tabular}{@{}lcc@{}}
    \toprule
      Training time per epoch (s)   & DRO & Martingale DRO \\
\midrule
       Average  &  1.66 & 1.73\\
       Variance & 1.90 $\times 10 ^ {-3}$ & 2.10 $\times 10^{-3}$\\
       \bottomrule
    \end{tabular}
    \caption{Per-iteration wall-clock time comparison between the vanilla DRO model and Martingale DRO model.}
\end{table}

\begin{figure}
    \centering
    \subfloat[{$\epsilon = 1.48$}]{\includegraphics[width=0.3\textwidth]{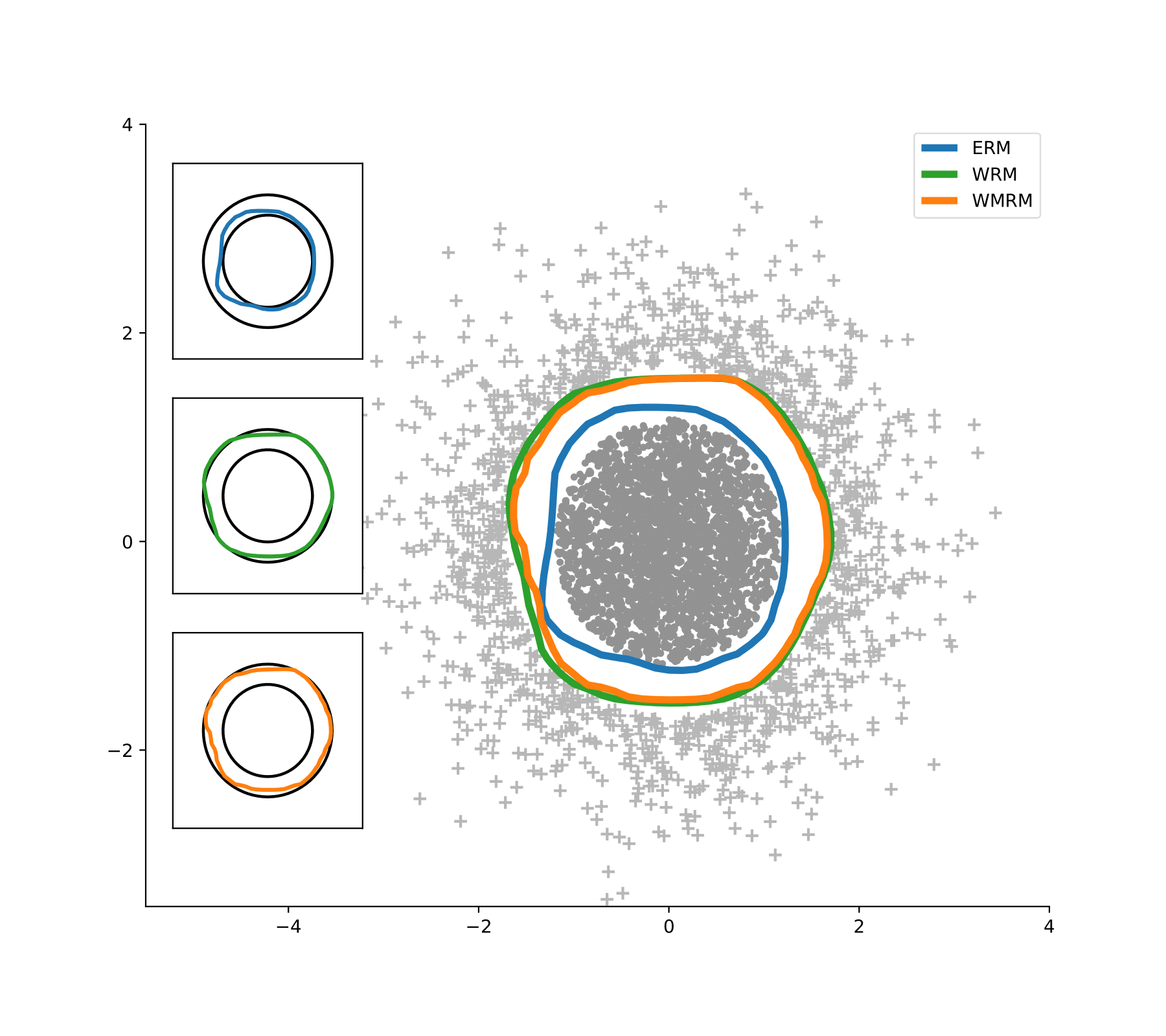}}\quad
    \subfloat[{$\epsilon = 1.32$}]{\includegraphics[width=0.3\textwidth]{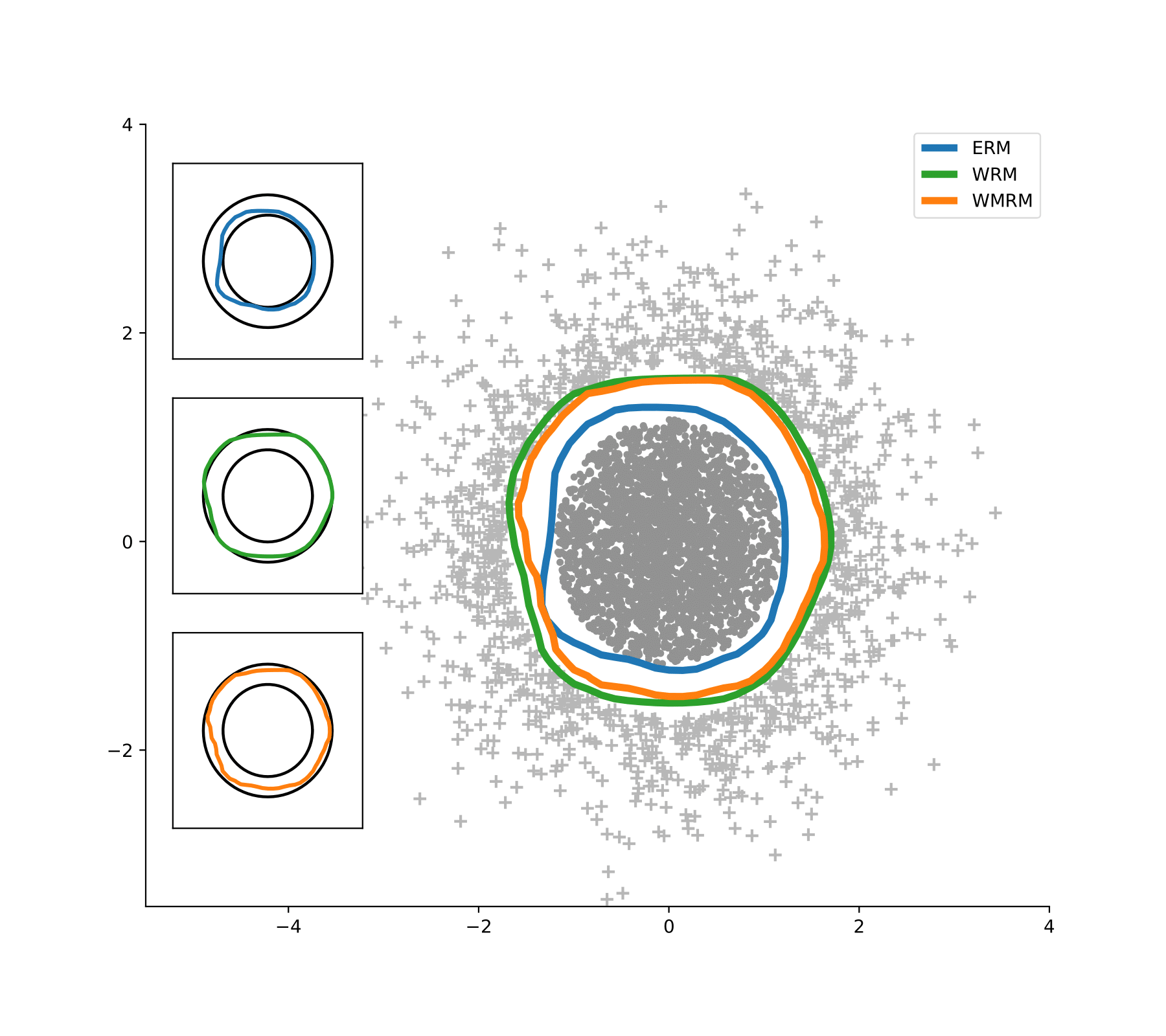}}\quad
    \subfloat[{$\epsilon = 1.16$}]{\includegraphics[width=0.3\textwidth]{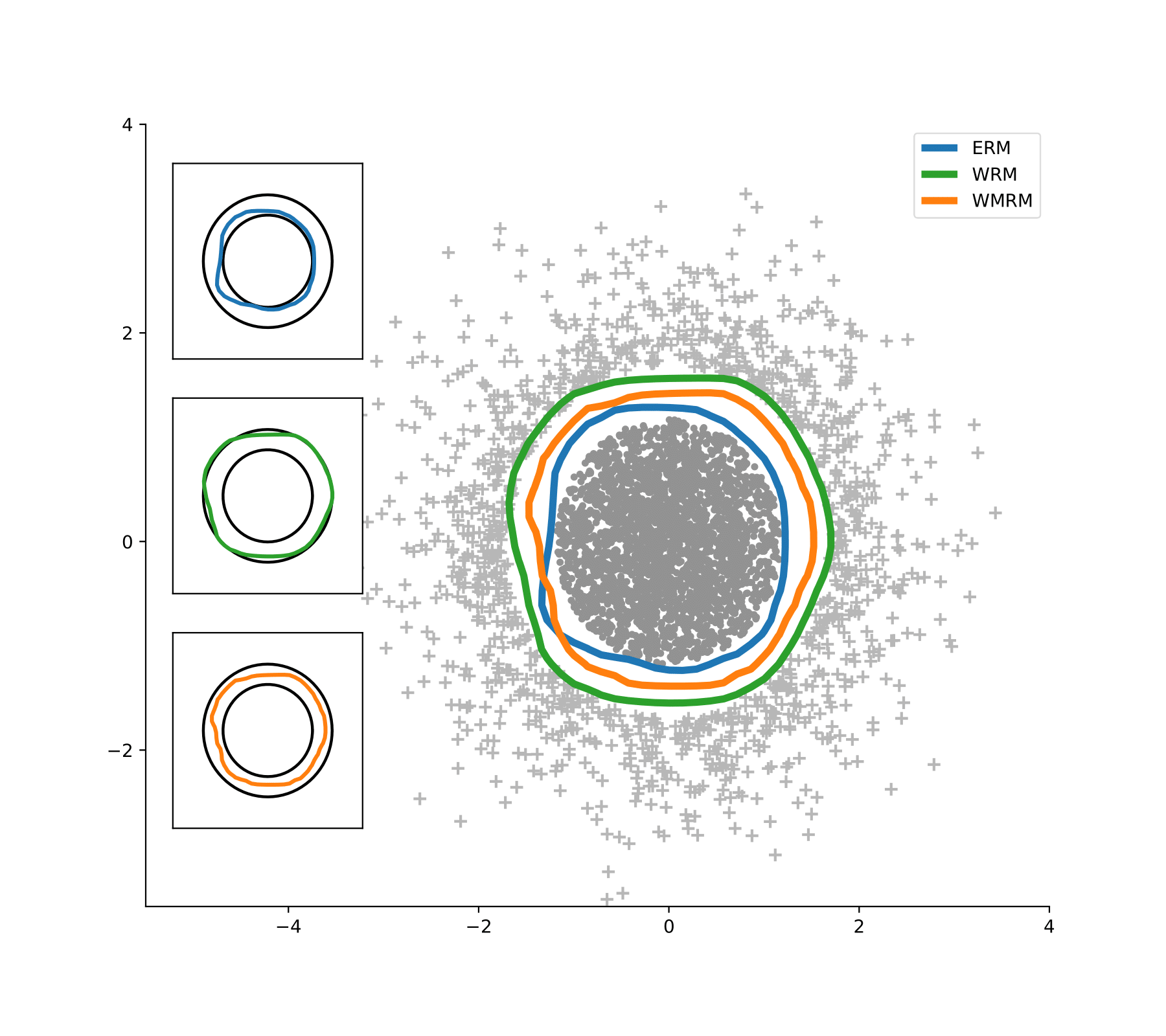}}\\
    \subfloat[{$\epsilon = 1$}]{\includegraphics[width=0.3\textwidth]{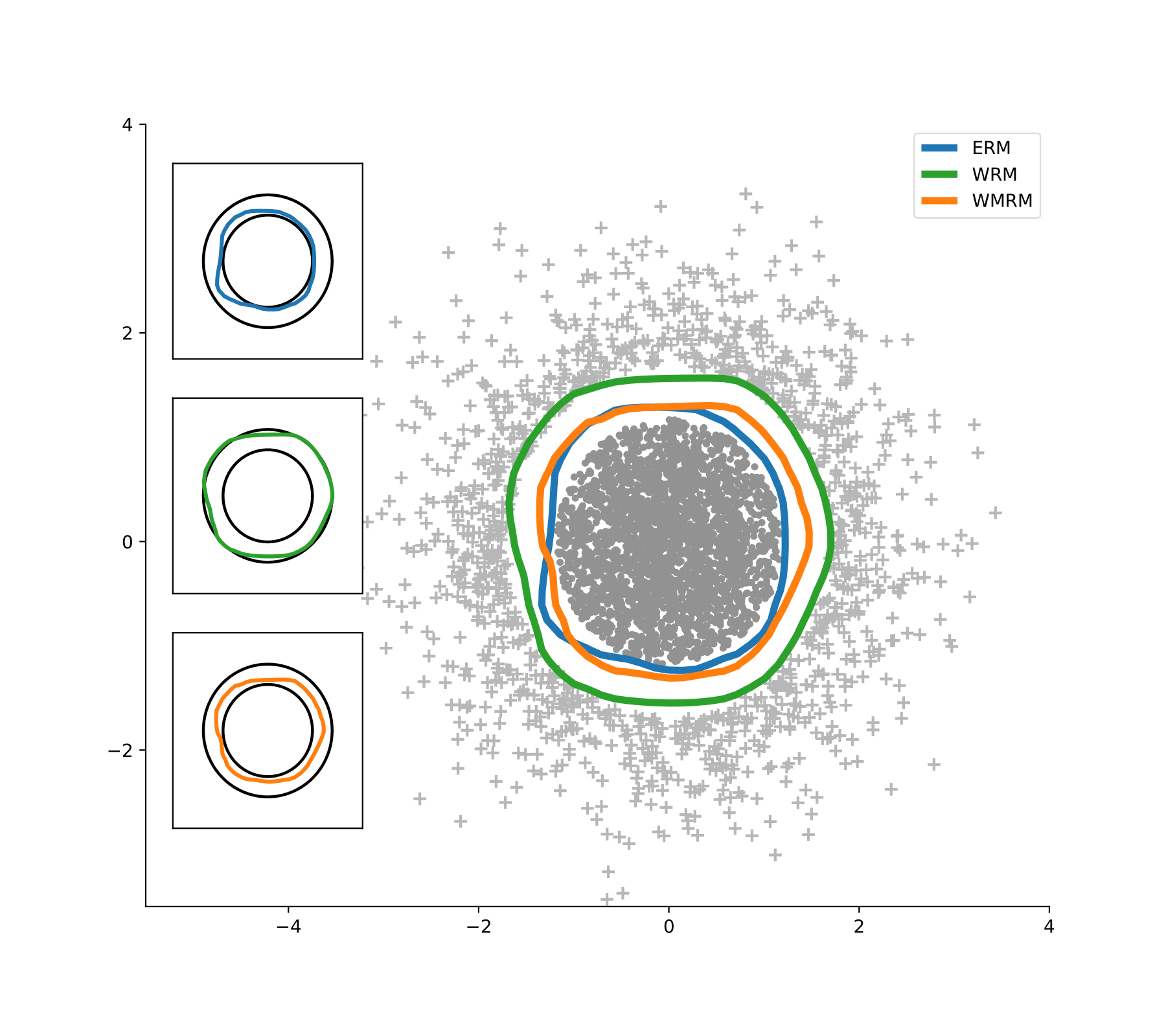}}\quad
    \subfloat[{$\epsilon = 0.84$}]{\includegraphics[width=0.3\textwidth]{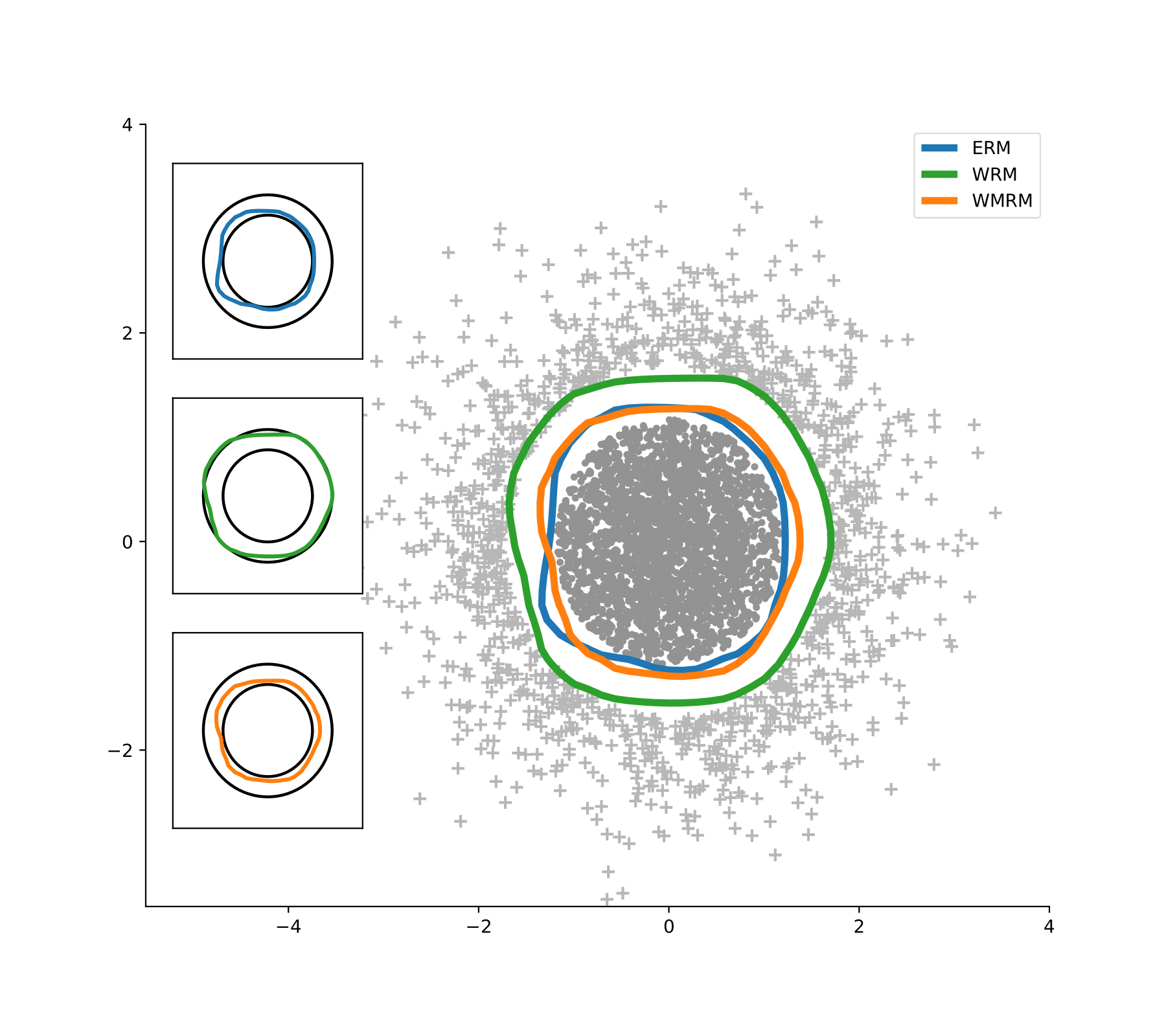}}\quad
    \subfloat[{$\epsilon = 0.68$}]{\includegraphics[width=0.3\textwidth]{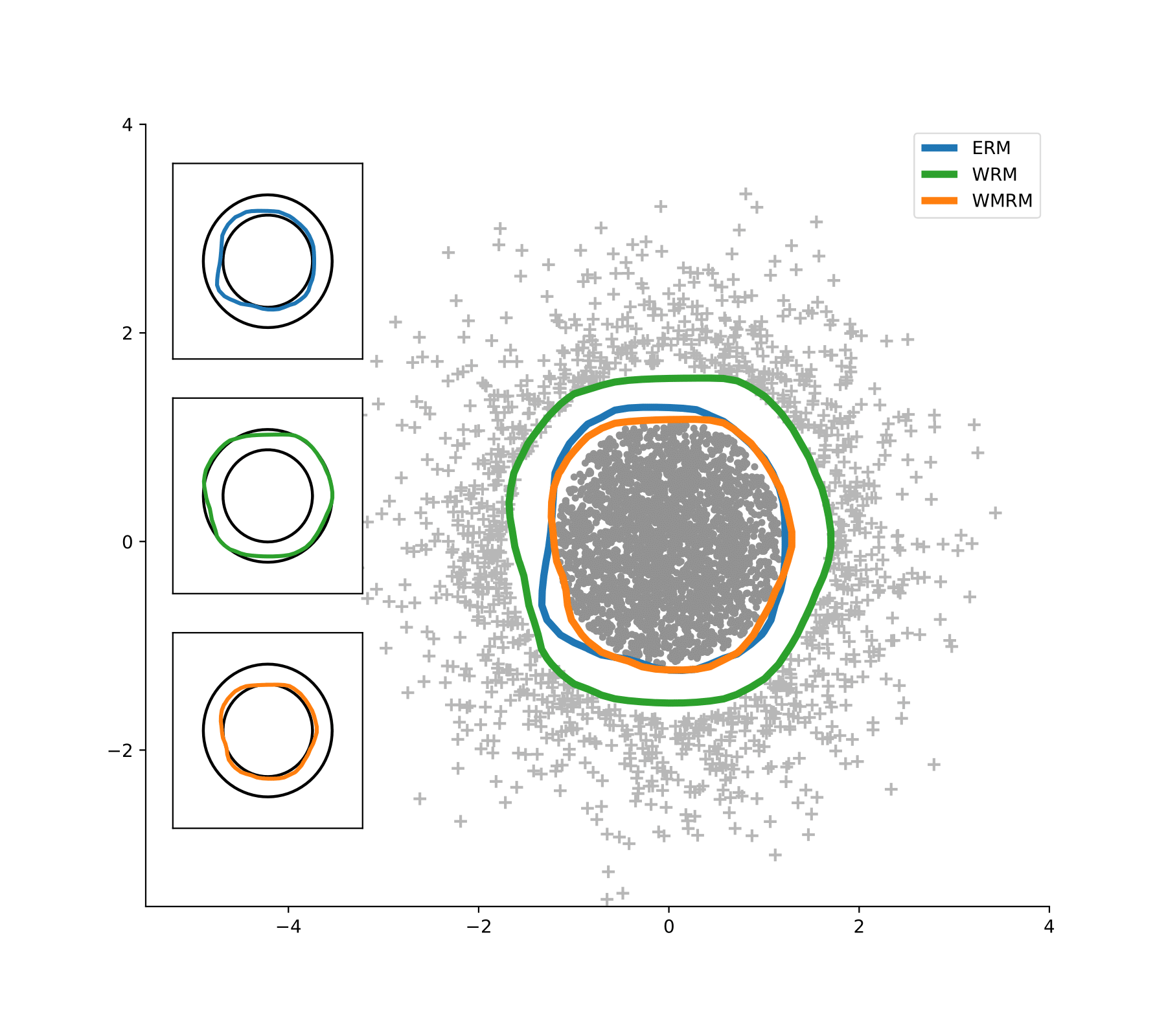}}\\
    \subfloat[{$\epsilon = 0.52$}]{\includegraphics[width=0.3\textwidth]{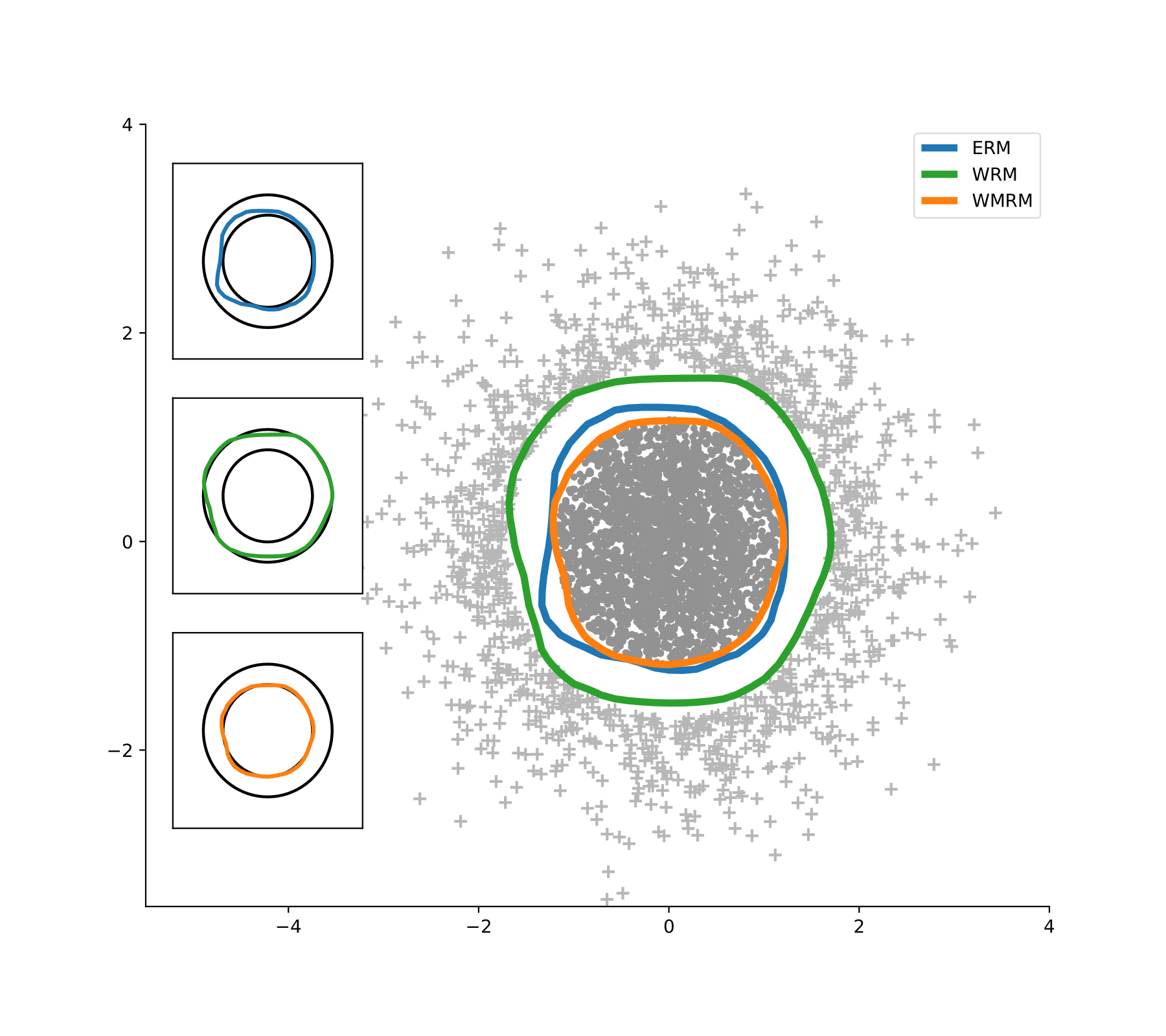}}\quad
    \subfloat[{$\epsilon = 0.36$}]{\includegraphics[width=0.3\textwidth]{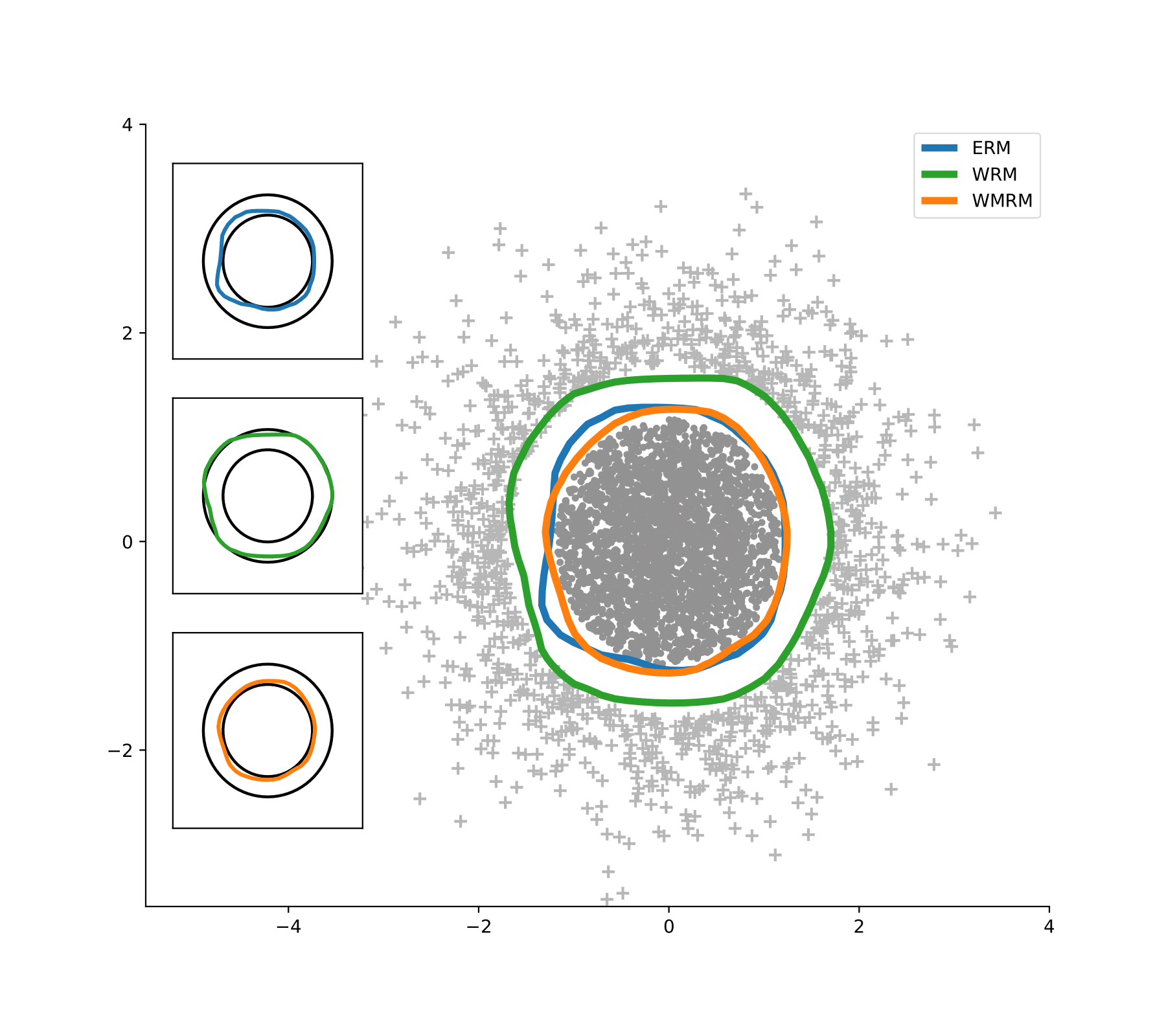}}\quad
    \subfloat[{$\epsilon = 0.2$}]{\includegraphics[width=0.3\textwidth]{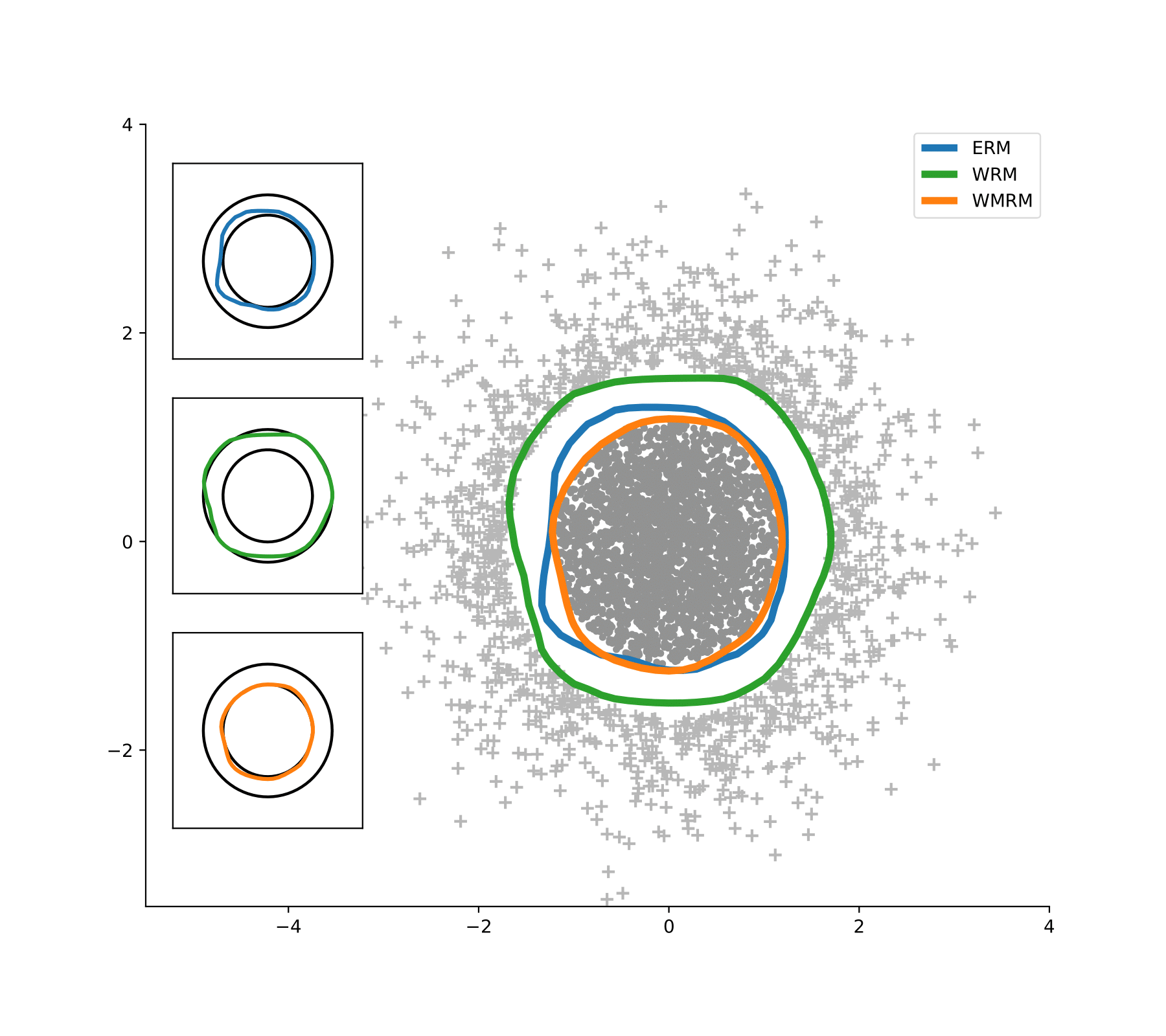}}\quad
    
    \caption{Synthetic data. Decreasing non-zero $\epsilon$'s push the perturbed martingale constraints towards the exact martingale constraints and force the classification boundary increasingly inward.}
    \label{supfig:synthetic_data}
\end{figure}

\begin{figure}
    \centering
    \subfloat[{\centering Original}]{\includegraphics[width=0.13\textwidth]{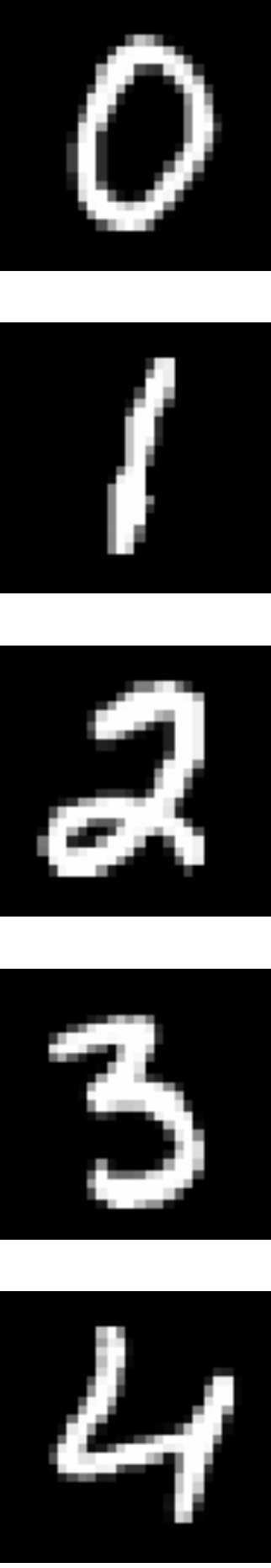}}\ \ 
    \subfloat[{ERM}]{\includegraphics[width=0.132\textwidth]{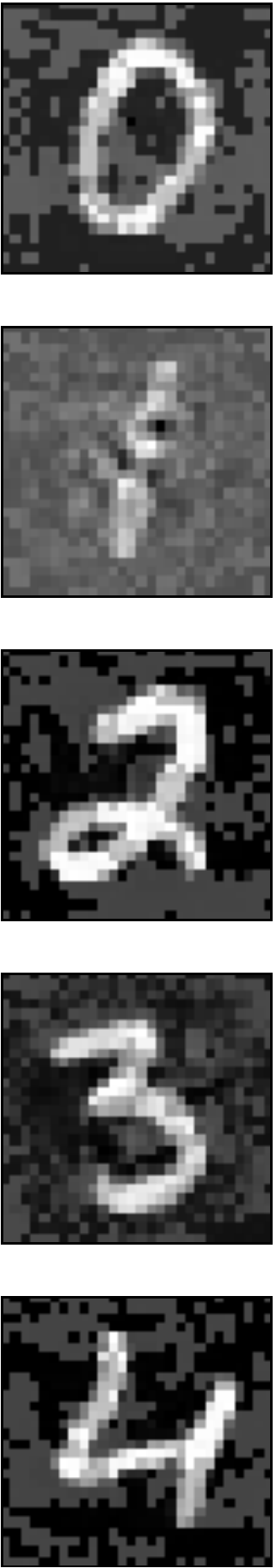}}\ \ 
    \subfloat[{\centering
        Jacobian  Regularization}]{\includegraphics[width=0.13\textwidth]{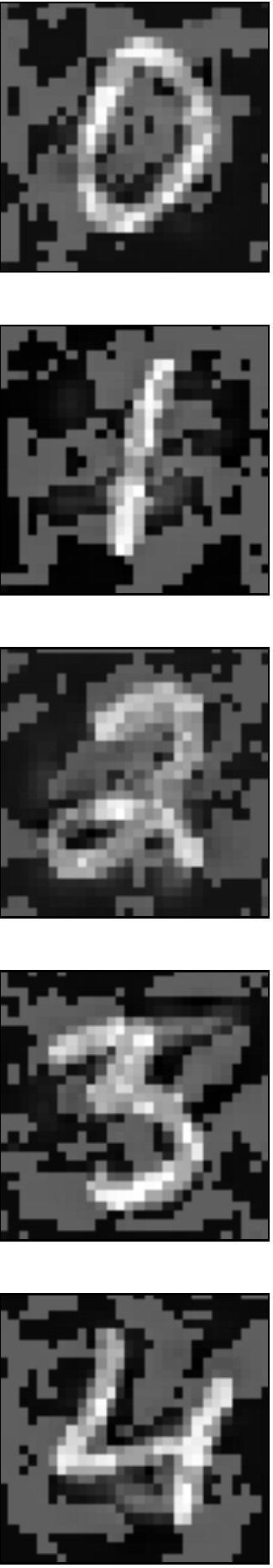}
    }\ \ 
    \subfloat[{DRO}]{\includegraphics[width=0.13\textwidth]{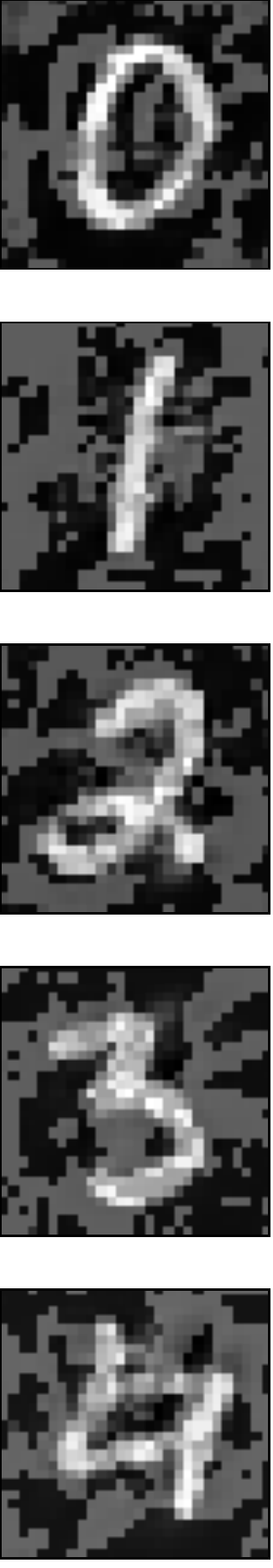}
    }\ \ 
    \subfloat[{\centering
        Martingale DRO}]{\includegraphics[width=0.13\textwidth]{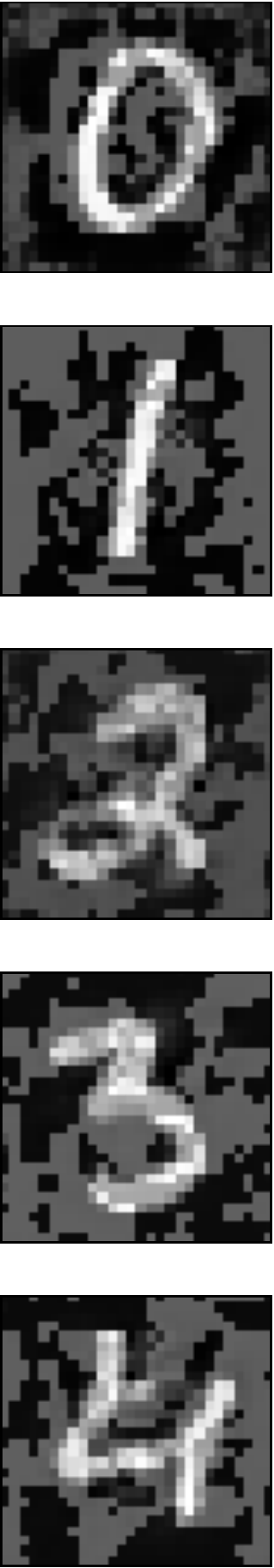}
    }
    \caption{The largest DRO perturbations such that each model makes correct prediction.}
    \label{supfig:MNIST_attack_visualize1}
\end{figure}

\begin{figure}
    \centering
    \subfloat[{\centering Original}]{\includegraphics[width=0.13\textwidth]{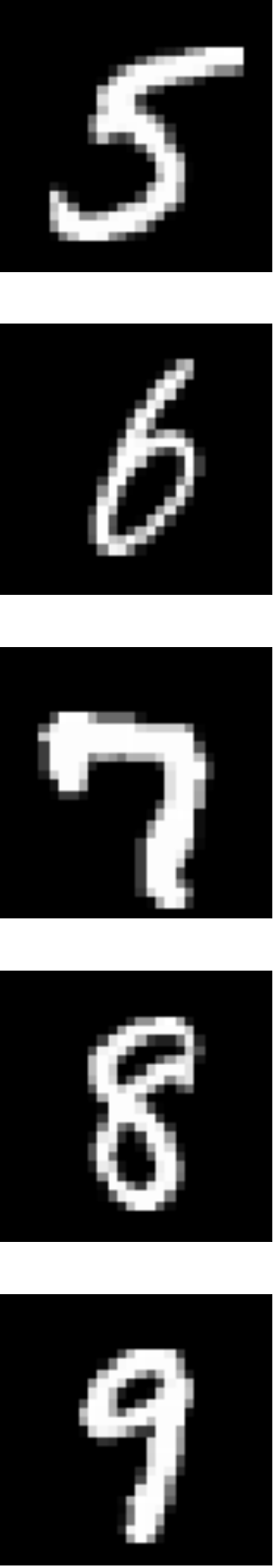}}~~
    \subfloat[{ERM}]{\includegraphics[width=0.13\textwidth]{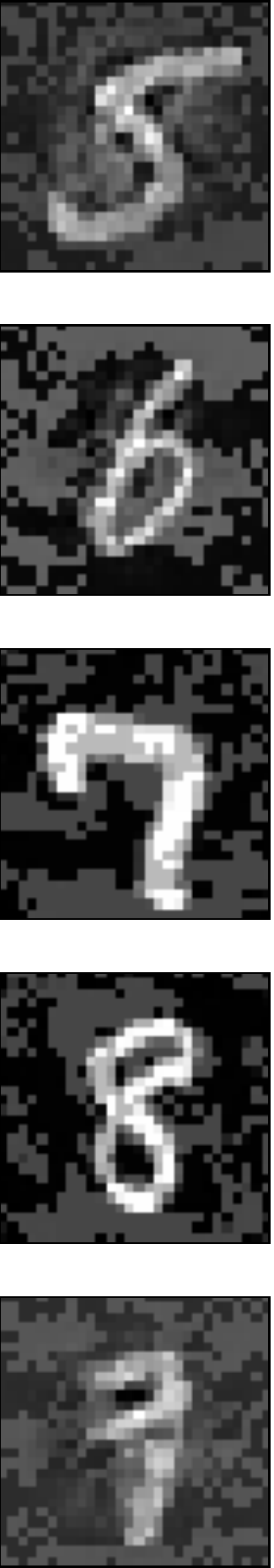}}\ \ 
    \subfloat[{\centering
        Jacobian  Regularization}]{\includegraphics[width=0.13\textwidth]{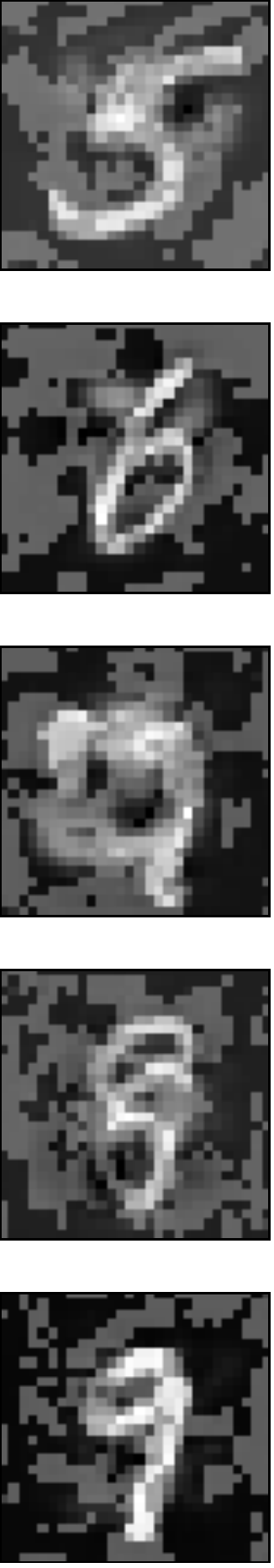}
    }\ \ 
    \subfloat[{DRO}]{\includegraphics[width=0.13\textwidth]{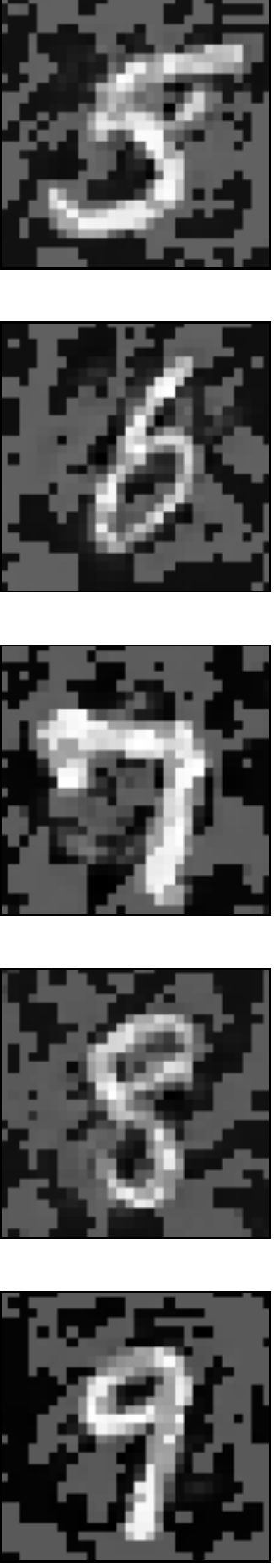}
    }\ \ 
    \subfloat[{\centering
        Martingale DRO}]{\includegraphics[width=0.13\textwidth]{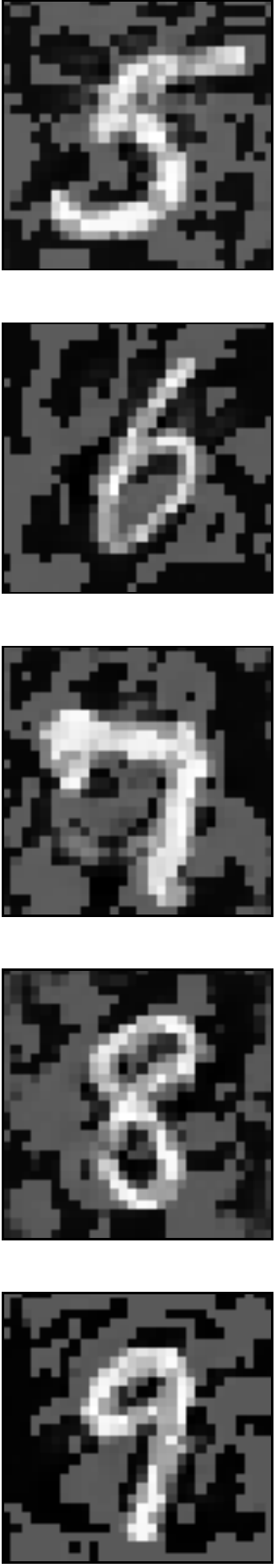}
    }
    \caption{The largest DRO perturbations such that each model makes correct prediction.}
    \label{supfig:MNIST_attack_visualize2}
\vspace{-0.2in}
\end{figure}

\end{document}